\newtheorem{lemma}{Lemma}[section]
\newtheorem{theorem}[lemma]{Theorem}
\newtheorem{corollary}[lemma]{Corollary}
\newtheorem{proposition}[lemma]{Proposition}
\newtheorem{remark}[lemma]{Remark}
\newtheorem{definition}[lemma]{Definition}
\newcommand{\Hom}{\operatorname{Hom}}
\newcommand{\Ext}{\operatorname{Ext}}
\newcommand{\Gen}{\operatorname{Gen}}
\newcommand{\Cogen}{\operatorname{Cogen}}
\newcommand{\eperp}{{\perp}_{\scriptscriptstyle E}}
\newcommand{\hperp}{{\perp}_{\scriptscriptstyle H}}
\newcommand{\module}{\operatorname{mod}}
\newcommand{\Module}{\operatorname{Mod}}
\newcommand{\T}{\mathbf T}
\newcommand{\Torsion}{\mathcal T}
\newcommand{\torsion}{T}
\newcommand{\Free}{\mathcal F}
\newcommand{\free}{F}
\newcommand{\rigid}{U}
\newcommand{\C}{\mathcal C}
\newcommand{\W}{\mathcal W}
\newcommand{\R}{\mathcal R}
\renewcommand{\SS}{\mathcal S}
\newcommand{\X}{\mathcal X}
\newcommand{\A}{\mathcal A}
\newcommand{\AAA}{\mathbb A}
\newcommand{\UUU}{\mathbb U}
\newcommand{\ind}{\operatorname{ind}}
\newcommand{\add}{\operatorname{add}}
\begin{document}
\title{Torsion pairs and rigid objects in tubes}

\begin{abstract}
We classify the torsion pairs in a tube category and show that
they are in bijection with maximal rigid objects in the extension of the tube
category containing the Pr\"{u}fer and adic modules. We show that the annulus
geometric model for the tube category can be extended to the larger category
and interpret torsion pairs, maximal rigid objects and the bijection between
them geometrically. We also give a similar geometric description in the case of
the linear orientation of a Dynkin quiver of type A.
\end{abstract}

\author[Baur]{Karin Baur}
\address{
Institut f\"ur Mathematik und wissenschaftliches Rechnen \\
Universit\"at Graz \\
Heinrichstrasse 36 \\
A-8010 Graz \\
Austria
}
\email{baurk@uni-graz.at}
\author[Buan]{Aslak Bakke Buan}
\address{
Department of Mathematical Sciences \\
Norwegian University of Science and Technology \\
N-7491 Trondheim \\
NORWAY
}
\email{aslakb@math.ntnu.no}

\author[Marsh]{Robert J. Marsh}
\address{School of Mathematics \\
University of Leeds \\
Leeds LS2 9JT \\
England
}
\email{marsh@maths.leeds.ac.uk}

\thanks{This work was supported by the Engineering and Physical Sciences
Research Council [grant number EP/G007497/1], by the FIM (Institute for
Mathematical Research) at the ETH, Z\"{u}rich, by a grant from the
Research Council of Norway, FRINAT grant number 196600 and
by the SNSF, grant number PP0022-114794.}

\date{2 November 2012}

\subjclass[2010]{Primary: 16G10, 16G70, 55N45; Secondary: 13F60, 16G20}

\maketitle

\section*{Introduction}

Torsion pairs in abelian categories were introduced by Dickson~\cite{dickson66}
(see the introduction to~\cite{br} for further details). They play an important
role in the study of localisation (see e.g.~\cite{br}) and in tilting theory
(see e.g.~\cite{ass,br}). Also, torsion pairs in the case of triangulated
categories, as defined in~\cite[2.2]{iy}, have been considered recently by a
number of authors, e.g.~\cite{hjr,iy,kz,nakaoka,ng,zz}.

The main object of study of this article is the collection of torsion pairs in
a tube category, which is a hereditary abelian category.
Such categories arise as full subcategories of module
categories over tame hereditary algebras, and are so-called because their
Auslander-Reiten quivers have the shape of a tube.

A geometric model for tube categories has been given in~\cite{bama,warkentin}
(see also~\cite{bz}). The indecomposable objects are parametrized by a
collection of oriented arcs in an annulus with $n$ marked points on one of its
boundary components. The dimension of the $\Ext$-group between two
indecomposable objects coincides with the negative intersection number of
the corresponding pair of arcs.

In this article, we classify the torsion pairs in a tube category $\T$. We build
on results in~\cite{bk1} giving a bijection between torsion pairs in a tube category and
cotilting objects in the category obtained by taking arbitrary direct limits of modules in the tube.
We show that all torsion pairs in the tube category arise in this way or via a dual construction.
Thus they are in bijection with maximal rigid objects in the category $\overline{\T}$
obtained from the tube by taking arbitrary direct or inverse limits of objects in the tube.
We give an explicit description of this bijection and its inverse.

We further show that the geometric model referred to above can be extended to the indecomposable
objects in $\overline{\T}$, i.e.\ to include Pr\"{u}fer and adic modules associated to the tube. These
extra objects are represented by certain infinite arcs in the annulus which spiral in towards the
inner boundary component. The result concerning the dimensions of $\Ext$-groups extends to this case also.
This enables us to give a characterisation of maximal rigid objects in $\T$ in the geometric model. We also
give a characterisation of torsion
pairs in the geometric model in terms of certain closure properties corresponding to closure properties
of the subcategories in a torsion pair. In particular, the collection of arcs corresponding to a
subcategory in a torsion pair must form an \emph{oriented Ptolemy diagram}, an oriented version in the
annulus case of the Ptolemy diagrams appearing in~\cite{hjr,ng}, as well as satisfying additional criteria.
We also give a geometric description of the above bijection and its inverse.

In order to give the geometric interpretation, we first give a similar model
for the linearly oriented quiver in type A
(note that M. Warkentin~\cite{warkentin} also suggests such a model) and
show how to interpret tilting modules and torsion pairs in this model.

We remark that, in independent work, T. Holm, P. J\o rgensen and
M. Rubey~\cite{holmjorgensenrubey} have classified the torsion pairs in the
cluster category associated to a tube (as opposed to the tube itself,
which we study here). The torsion pairs in the cluster case are different,
although we note that unoriented Ptolemy diagrams play a role in the cluster
tube case, while oriented Ptolemy diagrams appear here.

In Section 1, we recall the definition and some of the properties of torsion pairs in abelian categories.
In Section 2, we discuss the type A case. In Section 3, we recall the geometric model of the tube and show how it can be extended to include Pr\"{u}fer and
adic modules. We also discuss a certain reflection map (from~\cite{bk1})
on the indecomposable objects of the tube, and its properties.
In Section 4 we classify the torsion pairs in the tube and prove that
they are in bijection with maximal rigid objects in $\overline{\T}$,
giving an explicit description of the bijection and its inverse.
In Section 5, we give a geometric interpretation of maximal rigid objects
in $\T$ and torsion pairs in $\T$ and the bijection between them.

\section{Preliminaries} \label{s:preliminaries}

We shall adopt the convention throughout that all subcategories considered
are strictly full (i.e.\ full and closed under isomorphism). We shall also
consider modules up to isomorphism.
Let $\A$ be an abelian category. If $\X$ is a subcategory of $\A$,
we define ${}^{\hperp} \X$ (respectively, ${}^{\eperp} \X$) to be
the additive subcategory of $\A$ consisting of the objects
$Y$ satisfying $\Hom_{\A}(Y,X)=0$ (respectively,
$\Ext^1_{\A}(Y,X)$=0) for all $X$ in $\X$.
For an object $X$ of $\A$ we define ${}^{\hperp}
X={}^{\hperp} \add X$, and ${}^{\eperp} X={}^{\eperp} \add X$,
where $\add X$ denotes the additive subcategory of $\A$ whose
objects are finite direct sums of direct summands of $X$. We
similarly define $\X^{\hperp}$, $\X^{\eperp}$, $X^{\hperp}$ and
$X^{\eperp}$. For additive subcategories $\X_i$, $i=1,2, \ldots ,m$,
of $\A$, we write $\coprod_{i=1}^m \X_i$ for the smallest additive
subcategory of $\A$ containing the $\X_i$.

If $\X$ is an additive subcategory of $\A$, we write $\Gen \X$ for the
subcategory consisting of objects which are quotients of objects of $\X$ and,
for an object $X$, $\Gen X=\Gen(\add X)$. Similarly, we write $\Cogen \X$ for
the subcategory consisting of objects which are subobjects of objects of $\X$
and $\Cogen X=\Cogen(\add X)$. We write $\ind \X$ for the subcategory of $\X$
whose objects are the indecomposable objects in $\X$.

We next recall some of the theory of torsion pairs. Recall that
a pair $(\Torsion,\Free)$ of subcategories of
$\A$ is said to be a \emph{torsion pair} \cite[Sect.\ 1]{dickson66}
if $\Hom(\torsion,\free)=0$ for all objects $\torsion$ in $\Torsion$ and $\free$ in $\Free$ and
for every object $A$ in $\A$ there is a short exact sequence
\begin{equation}
0 \rightarrow {\torsion} \rightarrow A \rightarrow {\free} \rightarrow 0
\label{e:linkingsequence}
\end{equation}
with $\torsion$ in $\Torsion$ and $\free$ in $\Free$.
We say that $\Torsion$ is the \emph{torsion} part and $\Free$ is the
\emph{torsion-free} part of the pair.

Recall that an abelian category $\A$ is said to be \emph{finite
length} if it is skeletally small and every object in it is artinian
and noetherian. Equivalently, it is skeletally small and every object
has a finite length composition series.

Thus, for example, the category $\module A$ of finite dimensional modules over a finite-dimensional algebra is a finite length category.
Parts (a), (b) of the following can be proved using arguments as
in~\cite[Thm.\ 2.1]{dickson66} (see also~\cite[Lemma 1.7]{bk1}); part (c) is
a well-known corollary.

\begin{lemma} \label{l:tpclosure}
Suppose that $\A$ is a finite length abelian category.
\begin{enumerate}
\item[(a)]
Let $\Torsion$ be a subcategory of $\A$ and let $\Free=\Torsion^{\hperp}$.
Then $(\Torsion,\Free)$ is a torsion pair in $\A$ if and only if
$\Torsion$ is closed under quotients and extensions.
\item[(b)]
Let $\Free$ be a subcategory of $\A$ and let $\Torsion={}^{\hperp}\Free$.
Then $(\Torsion,\Free)$ is a torsion pair in $\A$ if and only if $\Free$ is closed under
subobjects and extensions.
\item[(c)] A pair $(\Torsion,\Free)$
of subcategories of $\A$ is a torsion pair if and only if
$\Torsion^{\hperp}=\Free$ and ${}^{\hperp}\Free=\Torsion$.
\end{enumerate}
\end{lemma}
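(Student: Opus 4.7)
The plan is to prove (a) in both directions, obtain (b) by a formal duality, and derive (c) as a direct consequence. The only substantive use of the finite length hypothesis enters in the converse of (a), where it is needed to construct a maximal torsion subobject of an arbitrary object.

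For the forward implication of (a), I would use the linking sequence (\ref{e:linkingsequence}) together with the $\Hom$-vanishing axiom. Given a surjection $\torsion \twoheadrightarrow B$ with $\torsion \in \Torsion$, decomposing $B$ as $0 \to \torsion' \to B \to \free \to 0$ with $\torsion' \in \Torsion$ and $\free \in \Free$ yields a composite $\torsion \twoheadrightarrow B \twoheadrightarrow \free$ that must be zero, forcing $\free = 0$ and hence $B \cong \torsion' \in \Torsion$. For extensions, a sequence $0 \to \torsion_1 \to X \to \torsion_2 \to 0$ with $\torsion_i \in \Torsion$ makes the torsion-free quotient $\free$ of $X$ a quotient of $\torsion_2$; the quotient-closure just proved places $\free$ in $\Torsion \cap \Free = 0$, so $X \in \Torsion$.

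The converse of (a) is where the finite length assumption is essential. Given $A \in \A$, I take $\torsion \subseteq A$ to be a maximal subobject lying in $\Torsion$, which exists because $A$ is Noetherian. (Moreover $\torsion$ is unique, since any two torsion subobjects $\torsion_1, \torsion_2 \subseteq A$ have sum $\torsion_1 + \torsion_2$ equal to a quotient of $\torsion_1 \oplus \torsion_2 \in \Torsion$, which lies in $\Torsion$ by quotient and extension closure.) Writing $\free = A/\torsion$, suppose some $\torsion' \in \Torsion$ admits a nonzero map to $\free$; its image $I$ is in $\Torsion$ by quotient closure, and the preimage $M \subseteq A$ of $I$ along $A \twoheadrightarrow \free$ fits in a short exact sequence $0 \to \torsion \to M \to I \to 0$, so $M \in \Torsion$ by extension closure, strictly enlarging $\torsion$ and contradicting maximality. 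Hence $\free \in \Torsion^{\hperp}$ and the required linking sequence exists.

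Part (b) follows from (a) applied to $\A^{\mathrm{op}}$, which is again finite length, on interchanging subobjects with quotients. For (c), the inclusions $\Torsion \subseteq {}^{\hperp}\Free$ and $\Free \subseteq \Torsion^{\hperp}$ are immediate from $\Hom$-vanishing. For the reverse inclusion, any $A \in {}^{\hperp}\Free$ has the surjection $A \twoheadrightarrow \free$ from its linking sequence lying in $\Hom(A, \free) = 0$, so $\free = 0$ and $A \in \Torsion$; dually $\Torsion^{\hperp} \subseteq \Free$. For the converse implication in (c), $\Torsion = {}^{\hperp}\Free$ is automatically closed under quotients (since $\Hom$ pulls back along surjections) and under extensions (by the left-exactness of $\Hom(-,\free)$ for each $\free \in \Free$), so part (a) produces the required linking sequence. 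The one genuinely nontrivial step in the whole argument is the construction of the maximal torsion subobject in the converse of (a); everything else is a formal diagram chase or a straightforward dualisation.
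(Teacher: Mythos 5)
Your proof is correct and is essentially the standard argument the paper invokes by reference to Dickson's Theorem 2.1: the forward implications via the linking sequence, the construction of the maximal torsion subobject (using the noetherian hypothesis) for the converse of (a), duality for (b), and the formal closure properties of ${}^{\hperp}\Free$ for (c). The paper does not write out these details, so your proposal simply supplies the intended argument.
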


Recall that a finite length abelian category is said to be \emph{serial} if
every object is a direct sum of indecomposable uniserial objects. We recall
the following well-known result.

\begin{lemma} \label{l:uniserialclosure}
Let $\A$ be a serial finite length abelian category. Then
\begin{enumerate}
\item[(a)] A morphism from an indecomposable object of $\A$ to a direct sum of indecomposable
objects is a monomorphism if and only if at least one of its components is a monomorphism.
\item[(b)] Let $\X$ be a subcategory of $\ind \A$.
Then we have $\ind \Gen(\add \X)=\ind \Gen \X$ and $\ind \Cogen(\add \X)=\ind \Cogen \X$.
\end{enumerate}
\end{lemma}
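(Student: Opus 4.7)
The plan is to exploit the defining property of serial categories: every indecomposable is uniserial, so its poset of subobjects (and hence, dually, of quotient objects) is a chain.

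For part (a), the \emph{if} direction is immediate: if $f_j\colon X\to Y_j$ is a monomorphism and $x\in \ker f$, then $f_j(x)=0$ forces $x=0$. For the \emph{only if} direction, write $f=(f_1,\ldots,f_n)\colon X\to\bigoplus_{i=1}^n Y_i$ and observe $\ker f=\bigcap_i \ker f_i$. Since $X$ is uniserial, the finite collection $\{\ker f_i\}$ of subobjects of $X$ is totally ordered, so it has a minimum element $\ker f_k$, and this minimum equals the intersection. Assuming $f$ is mono gives $\ker f_k=0$, so $f_k$ itself is mono.

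For part (b), the inclusions $\ind \Gen \X \subseteq \ind \Gen (\add \X)$ and $\ind \Cogen \X \subseteq \ind \Cogen (\add \X)$ are immediate from $\X \subseteq \add \X$. For the reverse inclusion on the $\Cogen$ side, suppose $Z$ is an indecomposable object of $\Cogen(\add \X)$, so there is a monomorphism $Z\hookrightarrow \bigoplus_{i} X_i$ with $X_i\in\X$. Applying part (a) to this map (with $Z$ indecomposable hence uniserial), some component $Z\to X_i$ is a monomorphism, giving $Z\in\Cogen \X$. Dually, for the $\Gen$ side, take $Z$ indecomposable in $\Gen(\add \X)$ with an epimorphism $g\colon \bigoplus_i X_i \to Z$, and consider the subobjects $\operatorname{im}(g_i)\subseteq Z$; since $Z$ is uniserial these form a chain, so their sum $\sum_i \operatorname{im}(g_i)=\operatorname{im}(g)=Z$ coincides with the largest, say $\operatorname{im}(g_k)$. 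Thus $g_k\colon X_k\to Z$ is an epimorphism and $Z\in\Gen X_k\subseteq\Gen \X$.

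There is no real obstacle here; the only subtlety is keeping the direction of the argument straight, namely that the nontrivial content is "$\add$ is not needed" and it comes entirely from the fact that uniserial objects have a chain of sub- and quotient objects, so that a sum/intersection over finitely many indices collapses to a single term.
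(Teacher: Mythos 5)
Your proof is correct. The paper states this lemma without proof, citing it as well known, and your argument is the standard one: an indecomposable object in a serial category is uniserial, so its subobjects (and quotient objects) form a chain, whence the kernel of $(f_1,\dots,f_n)$ is the smallest $\ker f_i$ and the image of an epimorphism from a finite direct sum is the largest $\operatorname{im}(g_i)$; part (b) then follows exactly as you say.
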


Note that the dual of Lemma~\ref{l:uniserialclosure}(a) also holds.
We fix an algebraically-closed field $K$ and denote by $D$ the vector space duality $\Hom_K(-,K)$.
Next, consider a finite dimensional $K$-algebra $\Lambda$. We denote
by $\Module \Lambda$ the category of all left $\Lambda$-modules, and by
$\module \Lambda$ the subcategory of all finitely generated $\Lambda$-modules.
Recall that a module ${\rigid}$ in $\module \Lambda$ is called \emph{tilting} if
\begin{itemize}
\item[(a)] The projective dimension of ${\rigid}$ is at most 1;
\item[(b)] $\Ext^1({\rigid},{\rigid}) = 0$;
\item[(c)] There is a short exact sequence
$$0\rightarrow \Lambda \rightarrow {\rigid}_0 \rightarrow {\rigid}_1 \rightarrow 0$$
with ${\rigid}_0$ and ${\rigid}_1$ in $\add {\rigid}$.
\end{itemize}
Cotilting modules in $\module \Lambda$ are defined dually. We only
consider basic tilting or cotilting modules, i.e.\ we assume that
${\rigid}= \amalg {\rigid}_i$, with ${\rigid}_i$ indecomposable and ${\rigid}_i \not\simeq {\rigid}_j$
for $i \neq j$.

\section{Dynkin type A}
\label{s:Dynkin}

We consider a linearly oriented quiver $Q$ of type $\text{A}_n$:
$$
\xymatrix{
1 & 2 \ar[l] & 3 \ar[l] & \cdots\ar[l] & n-1 \ar[l] & n \ar[l]
}
$$
We write $S_i$, $i=1,\dots,n$, for the simple $KQ$-modules.
Let $M_{ij}$ denote the indecomposable $KQ$-module with composition factors
$S_{i+1},\dots,S_{j-1}$ (starting from the socle).
If $i$ lies in $\{j-1,j\}$ we regard $M_{ij}$ as zero.
Note that $\module KQ$ is serial.

\subsection{Geometric model}
\label{s:geometricmodel}

We now describe a geometric model for the indecomposable $KQ$-modules. Note that
a similar such model is also suggested in~\cite[Remark 4.28]{warkentin}.
Here we will indicate how this model can incorporate torsion theories
in $\module KQ$ --- later we will indicate how this can be done for tubes. We
also give some more explicit information as preparation for this.
We consider a line segment $\ell_n$ with marked points $0,1,\dots,n+1$:
\[
\begin{picture}(220.00,20.00)
\drawline(10.00,15.00)(210.00,15.00)
\put(10.00,15.00){\makebox(0,0)[cc]{\tiny $\bullet$}}
\put(10.00,05.00){\makebox(0,0)[cc]{\tiny $0$}}
\put(40.00,15.00){\makebox(0,0)[cc]{\tiny $\bullet$}}
\put(40.00,05.00){\makebox(0,0)[cc]{\tiny $1$}}
\put(70.00,15.00){\makebox(0,0)[cc]{\tiny $\bullet$}}
\put(70.00,05.00){\makebox(0,0)[cc]{\tiny $2$}}
\put(150.00,15.00){\makebox(0,0)[cc]{\tiny $\bullet$}}
\put(150.00,05.00){\makebox(0,0)[cc]{\tiny $n-1$}}
\put(180.00,15.00){\makebox(0,0)[cc]{\tiny $\bullet$}}
\put(180.00,05.00){\makebox(0,0)[cc]{\tiny $n$}}
\put(210.00,15.00){\makebox(0,0)[cc]{\tiny $\bullet$}}
\put(210.00,05.00){\makebox(0,0)[cc]{\tiny $n+1$}}
\end{picture}
\]
and associate the module $M_{ij}$ with the arc $[i,j]$ above $\ell_n$
from $i$ to $j$ oriented towards $j$, $0\le i<j-1\le n$.
This gives a bijection between indecomposable $KQ$-modules and the
set $\A(\ell_n)$ of arcs up to isotopy between marked points of
$\ell_n$, above $\ell_n$, which are not isotopic to boundary arcs.

It is easy to check that, for $[i,j]$, $[i',j']\in \A(\ell_n)$,
we have $\Ext^1(M_{ij},M_{i'j'})\cong K$ if there is a negative crossing between
$[i,j]$ and $[i',j']$ (see Figure~\ref{fig:neg}) and is zero otherwise.
In the former case, the non-split extension takes the form:
$$
0\to M_{i'j'}\to \begin{array}{c} M_{i'j} \\ \amalg \\ M_{ij'} \end{array} \to M_{ij} \to 0
$$
up to equivalence, if $j'>i+1$, while if $j'=i+1$, it takes the form
$$
0\to M_{i'j'}\to M_{i'j} \to M_{ij} \to 0,
$$
This is interpreted geometrically in Figure~\ref{fig:split-sum}
where the indecomposable summands of the middle term
of the short exact sequence are indicated by dotted lines.

\begin{figure}[ht]
\psfragscanon
\psfrag{i}{\tiny $i$}
\psfrag{i'}{\tiny $i'$}
\psfrag{j}{\tiny $j$}
\psfrag{j'}{\tiny $j'$}
\includegraphics[scale=.5]{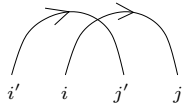}
\caption{A negative crossing between $[i,j]$ and $[i',j']$.}\label{fig:neg}
\end{figure}

\begin{figure}[ht]
\psfragscanon
\psfrag{i}{\tiny $i$}
\psfrag{i'}{\tiny $i'$}
\psfrag{j}{\tiny $j$}
\psfrag{j'}{\tiny $j'$}
\psfrag{j'=i+1}{\tiny $j'=i+1$}
\subfigure[Case $j'>i+1$]{
\includegraphics[scale=.6]{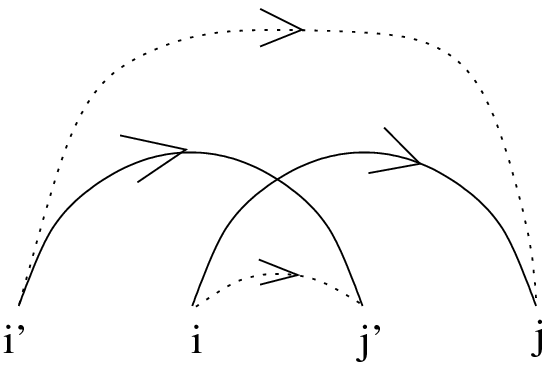}}
\ \ \ \ \ \
\subfigure[Case $j'=i+1$]{
\includegraphics[scale=.6]{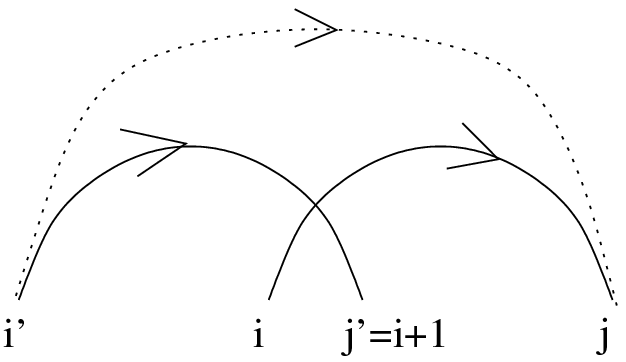}}
\caption{Non-split extension}\label{fig:split-sum}
\end{figure}

The AR-translation moves an arc one step to the left (or gives zero
if this is not defined). Furthermore, the indecomposable quotients
of a module correspond to moving the starting point weakly to the right,
while submodules correspond to moving the ending point weakly to the left.
We call the former arcs {\em left-shortenings} of $[i,j]$ and the latter arcs
{\em right-shortenings} of $[i,j]$.

By~\cite{bo}, a $KQ$-module is tilting if and only if it is
a maximal rigid $KQ$-module.
Since $M_{0,n+1}$ is projective-injective, it is a summand of every tilting module.
It follows that in the above model, tilting modules are in bijection with
triangulations of a polygon with $n+2$ sides.

Motivated by~\cite{ng,hjr} and the above description of extensions,
we call a collection of arcs in $\A(\ell_n)$ an {\em oriented Ptolemy
diagram} if, whenever $[i,j]$ and $[i',j']$ lie in the collection, with
$i'<i<j'<j$, we have that the arcs $[i,j']$ (provided $j'>i+1$) and $[i',j]$
also lie in the collection.

\subsection{Torsion pairs}

By Lemma~\ref{l:tpclosure} and Lemma~\ref{l:uniserialclosure},
a collection of arcs in $\A(\ell_n)$ corresponds to the torsion (respectively, torsion-free) part of a torsion pair in $\module KQ$ if and only if it forms an oriented Ptolemy diagram and is closed under left-shortening (respectively, right-shortening).

Given a tilting $KQ$-module ${\rigid}$, the pair
$(\Gen {\rigid}, \Cogen\tau {\rigid})=({\rigid}^{\eperp},{\rigid}^{\hperp})$
is known to form a torsion pair (see~\cite[VI.2]{ass}). A torsion pair arises
in this way if and only if $\Torsion$ contains all the indecomposable injective
modules, if and only if $\Free$ contains no non-zero injective module (see~\cite[VI.6]{ass}
for the first equivalence; the second is easy to check). The first equivalence
holds for an arbitrary finite dimensional hereditary algebra of finite representation type, and the
equivalence of the second two statements holds for any finite dimensional hereditary algebra.

Noting that a $KQ$-module is tilting if and only if it is cotilting, we have:

\begin{corollary}\label{cor:Gen-Cogen}
The map:
$$
{\rigid} \mapsto (\Gen {\rigid},\Cogen\tau {\rigid})=({\rigid}^{\eperp},{\rigid}^{\hperp})
$$
gives a bijection between tilting $KQ$-modules and
torsion pairs $(\Torsion,\Free)$ for which $\Torsion$ contains all the
indecomposable injective modules. The map:
$$
{\rigid} \mapsto (\Gen\tau^{-1}{\rigid},\Cogen {\rigid})=({}^{\hperp}{\rigid},{}^{\eperp}{\rigid})
$$
gives a bijection between tilting $KQ$-modules and
torsion pairs $(\Torsion,\Free)$ for which for which $\Free$ contains all
the indecomposable projective modules.
\end{corollary}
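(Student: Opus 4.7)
The plan is to assemble both bijections from the facts recorded just before the corollary, combined with standard tilting and cotilting theory.

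For the first bijection, the discussion preceding the statement already tells us that $U \mapsto (\Gen U, \Cogen \tau U)$ produces a torsion pair, identified also as $(U^{\eperp}, U^{\hperp})$, via~\cite[VI.2]{ass}, and that the resulting torsion pairs are precisely those whose torsion part contains all indecomposable injectives, via~\cite[VI.6]{ass}. So I will concentrate on injectivity of the map: if $U, U'$ are basic tilting $KQ$-modules with $\Gen U = \Gen U'$, I need $U \cong U'$. This follows from standard tilting theory: the indecomposable summands of a basic tilting $U$ are exactly the indecomposable Ext-projective objects of $\Torsion = \Gen U$, i.e.\ the indecomposables $X \in \Torsion$ with $\Ext^1(X, Y) = 0$ for every $Y \in \Torsion$, a set which depends only on the subcategory $\Torsion$.

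For the second bijection, I will dualise. As recalled just before the corollary (using that $KQ$ is hereditary of finite representation type), a $KQ$-module is tilting if and only if it is cotilting, so the domain of the second map may legitimately be taken to be the set of tilting modules. Applying the duals of~\cite[VI.2]{ass} and~\cite[VI.6]{ass} to a cotilting module $V$ yields a torsion pair $(\Gen \tau^{-1} V, \Cogen V)$, identified with $({}^{\hperp} V, {}^{\eperp} V)$ dually, and these are exactly the torsion pairs whose torsion-free part contains all indecomposable projective modules (equivalently, whose torsion part contains no non-zero projective). Injectivity is obtained dually by recovering $V$ as the sum of the indecomposable Ext-injective objects of $\Free = \Cogen V$.

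I expect the only point requiring any care to be the dualisation step itself: one needs to check that the dual forms of the two cited results from~\cite{ass} apply verbatim in this setting, and that the identifications $\Gen \tau^{-1} V = {}^{\hperp} V$ and $\Cogen V = {}^{\eperp} V$ hold as the precise duals of $\Gen U = U^{\eperp}$ and $\Cogen \tau U = U^{\hperp}$. Beyond this, the proof is routine.
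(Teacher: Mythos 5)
Your proposal is correct and follows essentially the same route as the paper: the paper treats the corollary as an immediate consequence of the preceding citations of~\cite[VI.2]{ass} and~\cite[VI.6]{ass} (giving well-definedness and surjectivity), recovers ${\rigid}$ as the direct sum of the indecomposable Ext-projectives in $\Torsion$ via~\cite[VI.2.5]{ass} (giving injectivity, exactly as you argue), and obtains the second bijection by dualising after noting that tilting and cotilting modules coincide here. No gaps.
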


Using Lemma~\ref{l:uniserialclosure}, the first map
in Corollary~\ref{cor:Gen-Cogen} can be interpreted in the geometric model:
$\Gen {\rigid}$ is obtained from ${\rigid}$ by closure under left shortening and
$\Cogen\tau {\rigid}$ is obtained from ${\rigid}$ by shifting to the left one step (deleting arcs starting at $0$) and then closing under right shortening (in both cases we then
take the additive closure).

Conversely, if $(\Torsion,\Free)$ is a torsion pair of the kind considered in
Corollary~\ref{cor:Gen-Cogen}, then ${\rigid}$ can be recovered as the direct
sum of the indecomposable Ext-projectives in $\Torsion$, i.e.\ the objects
\begin{equation}
\{X\in \ind(\Torsion): \Ext^1(X,{\torsion})=0\ \text{\ for all\ } {\torsion}\in \Torsion\},
\label{e:extprojectives}
\end{equation}
by~\cite[VI.2.5]{ass}.
Geometrically, this means taking all of the arcs $X$ in
$\ind \Torsion$ which do not have a negative crossing with an arc in $\ind \Torsion$.

There is also a geometric description of the second map in Corollary~\ref{cor:Gen-Cogen}
and its inverse. We leave the details to the reader.

\section{Tubes}

\subsection{Categorical description}

Fix an integer $n\geq 1$. Consider the quiver $Q$
\begin{equation}\label{quivertilde}
\xymatrix@R=0.3cm{
& 2 \ar[dl] & 3 \ar[l] & \cdots \ar[l] & n \ar[l] & \\
1 & & & & & n+1 \ar[lllll] \ar[ul]
}
\end{equation}
of Euclidean type $\widetilde{\text{A}}_{1,n}$. The path algebra $\Lambda= KQ$ is tame hereditary, and the module category $\module KQ$ has an
extension closed subcategory $\T_n$, which can be realized as the extension closure of the modules $L, S_2, \dots, S_{n}$,
where $S_i$ denotes the simple corresponding to vertex $i$, and $L$ denotes the unique indecomposable module
with composition factors $S_1$ and $S_{n+1}$. The category $\T_n$ is called a {\em tube} of rank $n$. Note that the indecomposables of $\T$ form a standard component
(see e.g.~\cite{ass}) of the AR-quiver of $KQ$.

Actually $\T_n$ is a hereditary finite length abelian category with
$n$ simple objects, and equivalent categories appear in various
settings in representation theory and algebraic geometry.
For each pair of objects $X,Y$ in $\T_n$, the
spaces $\Hom(X,Y)$ and $\Ext^1(X,Y)$ have finite $K$-dimension, and
there is an autoequivalence $\tau$ on $\T_n$, induced by the
Auslander-Reiten translate on $\module \Lambda$, with the property
that $\Hom(Y, \tau X) \simeq D\Ext^1(X,Y)$. Let
$\sigma:\mathbb{Z}\rightarrow \mathbb{Z}$ be the map taking $i$ to
$i+n$. From now on we denote the simples in $\T = \T_n$ by
$M_{i,i+2}$ for $i=0, \dots, n-1$, in such a way that $\tau
M_{i,i+2} = M_{i-1,i+1}$,
where we regard $M_{\sigma^k(i),\sigma^k(j)}$ as equal to $M_{i,j}$
for any integer $k$.
The category $\T_n$ is serial; thus the indecomposable objects in
$\T_n$ are uniserial and uniquely determined by their simple socle
and length (in $\T_n$).
We denote by $M_{i,i+l+1}$ an indecomposable with socle $M_{i,i+2}$
and length $l$. Then $\tau M_{i,i+l+1}  = M_{i-1,i+l}$,
and the AR-quiver of the tube $\T_n$ is as in Figure~\ref{fig:ARquiverTn}
(with the columns on the left- and right-hand sides identified).
Note that each indecomposable object has a unique name $M_{ij}$
(with $j-i\geq 2$) if we insist that $i$ lies in $\{0,1,\ldots ,n-1\}$.

\begin{figure}[ht]
$$\xymatrix@R=5pt@C=4pt{
&&&&&&&&&& \\
&& \vdots && && \vdots && \vdots  \\
M_{n-1,3} \ar@{--}[rr] \ar@{.}[dd] \ar@{.}[uu] \ar[dr] && M_{0,4} && \cdots && M_{n-3,1} \ar@{--}[rr] \ar[dr] && M_{n-2,2} \ar@{--}[rr] \ar[dr] && M_{n-1,3} \ar@{.}[dd] \ar@{.}[uu] \\
& M_{0,3} \ar@{--}[l] \ar@{--}[r] \ar[ur] \ar[dr] && && && \ar@{--}[l] M_{n-2,1} \ar@{--}[rr] \ar[ur] \ar[dr] && M_{n-1,2} \ar@{--}[r] \ar[ur] \ar[dr] & \\
M_{0,2} \ar@{--}[rr] \ar[ur] && M_{1,3} && \cdots && M_{n-2,0} \ar@{--}[rr] \ar[ur] && M_{n-1,1} \ar@{--}[rr] \ar[ur] && M_{0,2}
}$$
\caption{The AR-quiver of $\T_n$}
\label{fig:ARquiverTn}
\end{figure}
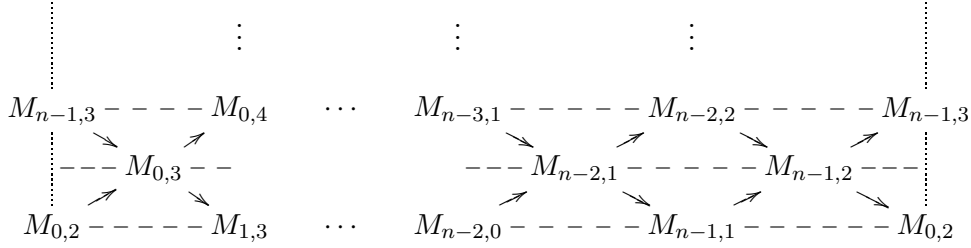

An additive subcategory of $\T$ is said to be of {\em finite type} if it contains only finitely many indecomposable objects.
Otherwise it is said to be of {\em infinite type}.
Some particular subcategories of $\T$ are important
for this paper.
For each fixed $i$ in $\{0, \dots n-1 \}$, we consider the additive subcategory
whose objects are the indecomposable objects $M_{i,i+t}$, for all $t>1$.
This is called a {\em ray}, and is denoted $\R_i$.
Dually, for each  $i$ in $\{0, \dots n-1\}$, we consider the
additive subcategory whose indecomposable objects are all indecomposable
objects $M_{i-u,i}$, for all $u>1$. This is called a {\em coray}, and
denoted $\C_i$.

For each $i$ in $\{0, \dots, n-1\}$, and each
$t>1$, the {\em wing} $\W_{i,i+t}$ is the
additive subcategory of $\T$ whose indecomposable objects are the
$M_{j,j+u}$ with $u\geq 2$, $i \leq j \leq i+t-2$ and
$j+u \leq i+t$. It contains a unique indecomposable object $M_{i,i+t}$
of maximal length. The objects $M_{i,i+2}$ and $M_{i+t-2,i+t}$ lie at the
bottom left and bottom right corners of the wing, respectively, in the
collection of vertices corresponding to the indecomposable objects of the wing in the
AR-quiver of $\T$.
For $t \leq 1$ we let $\W_{i,i+t}$ be the zero subcategory.
We also denote the wing of an indecomposable object $X$ by $\W_X$.

Due to the following well-known fact, we can apply
results from the previous section in our analysis of $\T$.

\begin{lemma}\label{wingsareA}
For $u \leq n+1$ the wing $\W_{i,i+u}$ in $\T_n$ is equivalent to
the module category $\module KQ$, where $Q$ is a linearly
oriented quiver of Dynkin type $A_{u-1}$. If $u\leq n$ the equivalence
is exact.
\end{lemma}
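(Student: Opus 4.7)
My plan is to construct the equivalence concretely on objects, promote it to a full linear equivalence using standardness of both categories, and then verify closure properties to obtain exactness in the range $u\le n$.

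First I would match the indecomposables. The base of $\W_{i,i+u}$ consists of the $u-1$ simples $M_{i,i+2}, M_{i+1,i+3},\dots,M_{i+u-2,i+u}$ of $\T_n$; relabel them $S'_1,\dots,S'_{u-1}$. Since $u-1\le n$ the integers $i,i+1,\dots,i+u-2$ are pairwise distinct modulo $n$, so no two of these simples are identified by $\sigma$. Using that $\T_n$ is serial and each indecomposable in the wing is uniquely determined by its socle and length, the wing indecomposable $M_{i+a,i+a+b}$ (with $a\ge 0$, $b\ge 2$, $a+b\le u$) has composition factors $S'_{a+1},\dots,S'_{a+b-1}$, exactly matching the $\module KQ$-indecomposable $M_{a,a+b}$ from Section~\ref{s:Dynkin}. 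This gives a bijection on indecomposables.

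Next I would lift this to a $K$-linear equivalence. Since the indecomposables of $\T_n$ form a standard component of the AR-quiver of $KQ$, morphism spaces between wing objects are computed as the $K$-span of paths in the AR-quiver modulo mesh relations. The sub-translation-quiver of the AR-quiver of $\T_n$ on the vertices of $\W_{i,i+u}$ is, because $u\le n+1$ rules out any $\tau$-identification, a triangular shape isomorphic (as a translation quiver, including AR-meshes) to the AR-quiver of $\module KQ$ for linearly oriented $A_{u-1}$. Since $\module KQ$ for a Dynkin quiver is also standard, its morphism spaces are given by the same combinatorial recipe. Thus the object bijection extends to a fully faithful $K$-linear functor $F\colon \module KQ\to \W_{i,i+u}$, which is essentially surjective by construction.

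Finally, for exactness when $u\le n$, I would check that the wing is closed under subobjects, quotients and extensions in $\T_n$. Because $\T_n$ is serial, the subobjects of $M_{j,j+v}$ are the $M_{j,k}$ with $j+2\le k\le j+v$ and the quotients are the $M_{k,j+v}$ with $j\le k\le j+v-2$; if $M_{j,j+v}$ lies in $\W_{i,i+u}$ these all remain in the wing, so kernels, images and cokernels of morphisms between wing objects stay inside the wing. For extensions, use $\Ext^1(Y,X)\cong D\Hom(X,\tau Y)$ together with the explicit description of the middle term of a non-split extension (analogous to the type A formula in Section~\ref{s:Dynkin}); when $u\le n$, the indices that appear in the middle term remain within $\{i,\dots,i+u\}$, so the middle term stays in $\W_{i,i+u}$ and $F$ is exact.

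The main obstacle is the exactness statement, specifically closure under extensions. One must track carefully which pairs $(X,Y)$ in the wing satisfy $\Ext^1(Y,X)\ne 0$ and verify that the indices of the middle term remain inside $\{i,\dots,i+u\}$; this is exactly where the bound $u\le n$ (rather than $u\le n+1$) is needed, because for $u=n+1$ the wrap-around of $\sigma$ produces non-split extensions whose middle term is an indecomposable of length $\ge n+1$ that escapes the wing, and the equivalence, while still valid categorically, ceases to preserve short exact sequences.
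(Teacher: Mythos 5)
The paper offers no proof of this lemma --- it is stated as a ``well-known fact'' --- so there is no argument of the authors' to compare against; I am assessing your proposal on its own merits. Your object-level matching is correct, and your exactness argument via closure of the wing under subobjects, quotients and extensions is the right mechanism and does establish the claim for $u\le n$ (indeed it gives the stronger fact the authors actually use later, namely that $\Ext^1$ computed in $\W_{i,i+u}$ agrees with $\Ext^1$ computed in $\T_n$). Two points deserve attention. First, in the step where you upgrade the object bijection to a full and faithful functor, identifying the sub-translation-quiver on the wing vertices with the AR-quiver of type $A_{u-1}$ is not by itself enough: in a mesh category, morphisms between vertices of a subquiver could a priori be supported on paths leaving the subquiver (here, paths wrapping around the tube), and the meshes ending at the objects $M_{i,b}$ on the left edge of the wing involve vertices outside it. You should add the observation that a basis of $\Hom(M_{i'j'},M_{ij})$ in a standard tube is indexed by integers $k$ with $i'+kn\le i$ and $i+2\le j'+kn\le j$, and that for two wing objects with $u\le n+1$ only $k=0$ can occur; this pins the Hom spaces to exactly those of linearly oriented $A_{u-1}$.

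Second, your closing diagnosis of the case $u=n+1$ is not correct. The wing $\W_{i,i+n+1}$ is \emph{still} closed under subobjects and quotients in $\T_n$ (left- and right-shortenings of an arc contained in $[i,i+n+1]$ stay in $[i,i+n+1]$), so kernels, images and cokernels of morphisms between wing objects are computed inside the wing and the composite functor still preserves short exact sequences in the literal sense. What fails at $u=n+1$ is extension-closure of the image: e.g.\ in $\T_2$ the sequence $0\to M_{1,3}\to M_{1,4}\to M_{0,2}\to 0$ has end terms in $\W_{0,3}$ but middle term outside it, so the induced map $\Ext^1_{\module KQ}(X,Y)\to\Ext^1_{\T}(FX,FY)$ is injective but not surjective (and the escaping middle term need not have length $\ge n+1$; here it has length $n$). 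This is precisely the content of the authors' remark that for $u=n+1$ the image of the projective-injective is not $\Ext$-projective in the wing. Since the lemma asserts nothing about exactness when $u=n+1$, this slip does not invalidate your proof of the statement, but it does misidentify where the hypothesis $u\le n$ is genuinely needed, which matters for how the lemma is applied later in the paper.
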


Note that if $u=n+1$, the object corresponding to the projective-injective
indecomposable module in $\module KQ$ under the above equivalence is not $\Ext$-projective in the wing.

We denote by $\Module \Lambda$ the category of all left
$\Lambda$-modules. Let $\varinjlim \T$ be the subcategory of $\Module \Lambda$
whose objects are direct limits of filtered direct systems of objects in $\T$.
Note that $\varinjlim \T$ contains the  {\em Pr\"{u}fer modules}, i.e.\ the modules $M_{i,\infty}$, $i=0,2,\ldots n-1$ obtained as direct limits
of the indecomposable objects in the rays, i.e.\
$$M_{i,\infty} = \varinjlim (M_{i,i+2} \to M_{i,i+3} \to M_{i,i+4} \to \cdots).$$
Let $\varprojlim \T$ be the subcategory of $\Module \Lambda$
whose objects are inverse limits of filtered inverse systems of objects in $\T$.
This category contains the {\em adic} modules, which are
obtained as inverse limits along a coray:
$$M_{-\infty,i} = \varprojlim (\cdots \to M_{i-4,i} \to M_{i-3,i} \to
M_{i-2,i} ).$$

Let $\overline{\T}$ be the subcategory of $\Module \Lambda$ whose
objects are all filtered direct limits or filtered inverse limits of
objects in $\T$. This category clearly contains $\varinjlim \T$
and $\varprojlim \T$.
We extend the definition of $\sigma$ to Pr\"{u}fer and adic modules
with the convention that $\sigma(\pm \infty)=\pm \infty$: note that any Pr\"{u}fer
module has a unique name $M_{i,\infty}$ if we take $i$ in $\{0,1,\ldots ,n-1\}$, and
similarly for adic modules.

Recall that a $\Lambda$-module $M$ is \emph{pure-injective} if
the canonical map $M \rightarrow DDM$ is a split monomorphism.
For background on pure-injective modules, and other definitions,
see e.g.~\cite{jensenlenzing89} or~\cite{prest09}.
It can be shown (see \cite{bk1}) that the category $\overline{\T}$
has the following properties:

\begin{itemize}
\item[$\bullet$] All objects are pure-injective as $\Lambda$-modules.
\item[$\bullet$] Any object is determined by its indecomposable direct summands.
\item[$\bullet$] The indecomposables in $\overline{\T}$ are exactly the indecomposables
  $M_{ij}$ in $\T$, the Pr{\"u}fer modules $M_{i,\infty}$ and the
  adic modules $M_{-\infty,i}$.
\end{itemize}

A module $M$ in $\overline{\T}$ is called {\em rigid} if
$\Ext^1_{\Lambda}(M,M) = 0$. Note that since all objects in
$\overline{\T}$ are pure-injective, this definition is equivalent to having
$\Ext^1_{\Lambda}(M',M'') = 0$ for all indecomposable direct summands $M',M''$ of
$M$. Now, a rigid module $M$ in $\overline{\T}$ is called {\em maximal
  rigid}, if $\Ext^1_{\Lambda}(M \amalg X,M \amalg X) = 0$ for an
indecomposable $X$ in $\overline{\T}$ implies
that $X$ is isomorphic to a direct summand in $M$.

\subsection{Geometric model}\label{cm}

We now give a geometric model for $\overline{\T}$. This extends the
known geometric model for $\T$~\cite{bama,warkentin}.
Consider an annulus $\AAA(n)$ with $n$ marked points on the outer boundary.
The points are labelled $0, 1, \dots, n-1$, and arranged anticlockwise
(see Figure~\ref{fig:annulus}).

\begin{figure}[ht]
\psfragscanon
\psfrag{0}{$\scriptstyle 0$}
\psfrag{1}{$\scriptstyle 1$}
\psfrag{2}{$\scriptstyle 2$}
\psfrag{n-1}{$\scriptstyle n-1$}
\includegraphics[width=2.5cm]{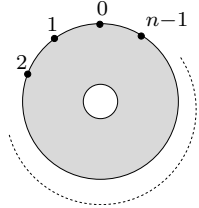}
\caption{An annulus with $n$ marked points on its outer boundary.}
\label{fig:annulus}
\end{figure}

Let $\UUU(n)$ denote the universal cover of $\AAA(n)$, with marked
points corresponding to $\mathbb{Z}$ (and with $0,1, \dots, n-1$
lying in a fundamental domain). See Figure~\ref{fig:universalcover}.

\begin{figure}[ht]
\psfragscanon
\psfrag{-1}{$\scriptstyle -1$}
\psfrag{0}{$\scriptstyle 0$}
\psfrag{1}{$\scriptstyle 1$}
\psfrag{2}{$\scriptstyle 2$}
\psfrag{n-1}{$\scriptstyle n-1$}
\psfrag{n}{$\scriptstyle n$}
\includegraphics[height=2cm]{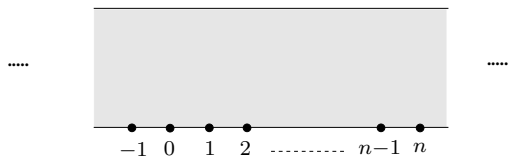}
\caption{The universal cover of the annulus in Figure~\ref{fig:annulus}.}
\label{fig:universalcover}
\end{figure}

For integers $i,j$ with $i+2 \leq j$, let $[i,j]$ denote the arc in
$\UUU(n)$ with starting point $i$ and ending point $j$, oriented from $i$ to $j$.
We also allow arcs which have only one end-point: arcs of the form $[i,\infty]$
(respectively, $[-\infty,j]$ which start
at $i$ (respectively, end at $j$) and are oriented in the positive $x$ direction.
See Figure~\ref{fig:infinitearc1}.

\begin{figure}
\psfragscanon
\psfrag{i}{$i$}
\psfrag{iinf}{$[i,\infty]$}
\psfrag{i-inf}{$[-\infty,i]$}
\includegraphics[width=12cm]{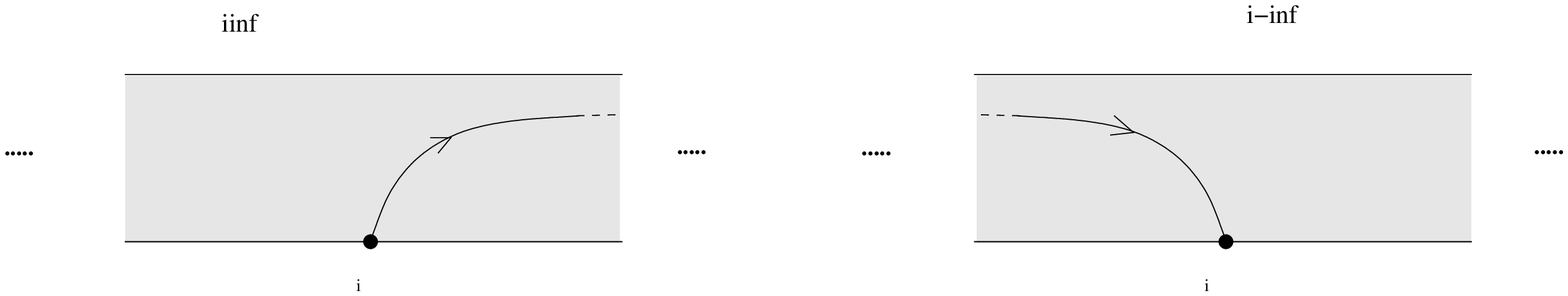}
\caption{Infinite arcs in $\UUU(n)$.}
\label{fig:infinitearc1}
\end{figure}

Let $\pi_n([i,j])$ denote the corresponding arc in $\AAA(n)$ and
let $\widetilde{A}=\widetilde{\A}(\AAA(n))$ denote the set of (isotopy classes
of) such arcs. It contains the set $\A=\A(\AAA(n))$ of arcs of the form $\pi_n([i,j])$
with $i,j$ finite. The map $\psi:\widetilde{\A}\rightarrow \ind \overline{\T}$
sending $\pi_n([i,j])$ to $M_{ij}$ is a bijection.
The infinite arcs are displayed in Figure~\ref{fig:infinitearc2}.

\begin{figure}
\psfragscanon
\psfrag{i}{$i$}
\psfrag{iinf}{$\pi_n([i,\infty])$}
\psfrag{i-inf}{$\pi_n([-\infty,i])$}
\includegraphics[height=3cm]{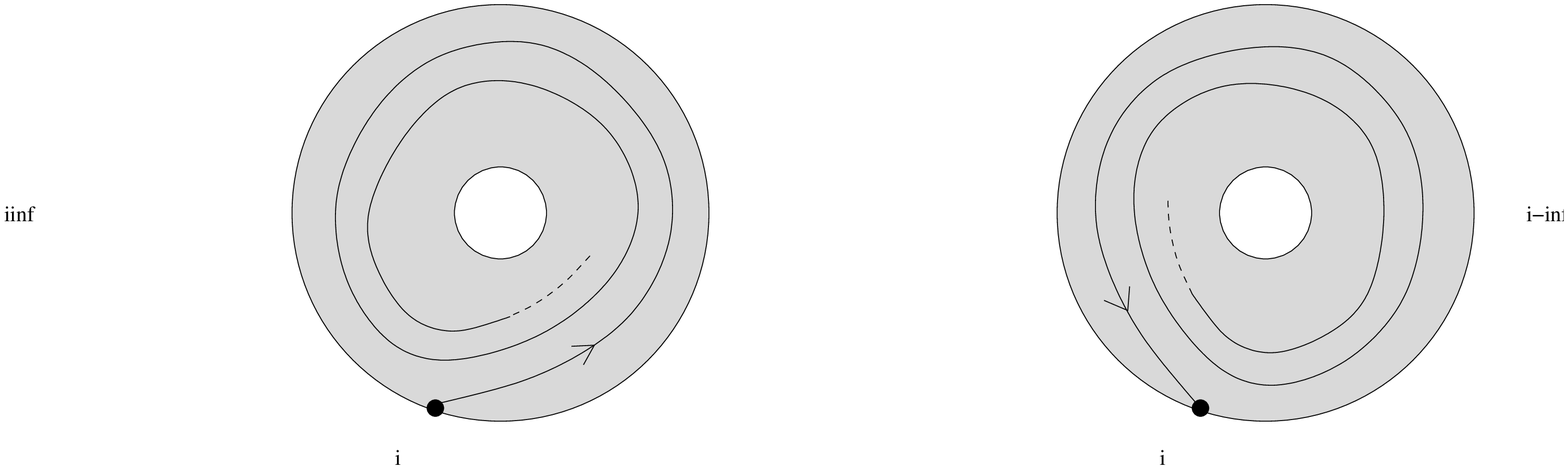}
\caption{Infinite arcs in $\AAA(n)$.}
\label{fig:infinitearc2}
\end{figure}

Define a quiver with vertices given by the elements in $\A$ and arrows:
$$\pi_n([i,j]) \to \pi_n([i,j+1])$$ and $$\pi_n([i,j]) \to
\pi_n([i+1,j]) \text{ (if $j \neq i+2)$} $$
Defining a translate using the formula $\tau(\pi_n([i,j])) = \pi_n([i-1,j-1])$,
this becomes a translation quiver. We call this the \emph{(translation) quiver
of $\A(\AAA(n))$}.

By~\cite[Lemma 2.5]{bama},~\cite[4.18]{warkentin} (or, using unoriented arcs,~\cite[\S3.4]{bz},~\cite{gehrig}), we have:

\begin{proposition}
The restriction of $\psi$ to $\A$ gives an isomorphism between
the translation quiver of $\A(\AAA(n))$ and the AR-quiver of $\T_n$.
\end{proposition}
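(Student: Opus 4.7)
The plan is to verify, in sequence, that the restriction $\psi|_{\A}$ preserves vertices, translation, and arrows; all three checks essentially amount to matching the bookkeeping set up earlier in Section~\ref{cm} with the description of $\T_n$ recorded in the categorical subsection. The proposition is flagged as a reference to \cite{bama,warkentin,bz,gehrig}, so the role of the proof here is to confirm that the $\psi$-formulation is consistent with those sources.

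First, I would invoke the bijection $\psi:\widetilde{\A}\to \ind\overline{\T}$, $\pi_n([i,j])\mapsto M_{ij}$, established just before the proposition. Restricting to the subset $\A\subset\widetilde{\A}$ of arcs both of whose endpoints are finite yields a bijection onto $\{M_{ij}:i,j\text{ finite}\}=\ind\T_n$, since the Pr\"ufer and adic indecomposables are precisely those with an infinite endpoint.

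Next I would check that $\psi$ intertwines the two translations. On the arc side the translation is defined by $\tau\pi_n([i,j])=\pi_n([i-1,j-1])$; on the tube side the categorical description records $\tau M_{ij}=M_{i-1,j-1}$. Hence $\psi\circ\tau=\tau\circ\psi$ tautologically. For arrows, the quiver of $\A(\AAA(n))$ has, at each vertex $\pi_n([i,j])$, outgoing arrows to $\pi_n([i,j+1])$ and, unless $j=i+2$, to $\pi_n([i+1,j])$. Because each $M_{ij}$ is uniserial of length $j-i-1$, the irreducible morphisms out of $M_{ij}$ in $\T_n$ are the canonical ray-inclusion $M_{ij}\hookrightarrow M_{i,j+1}$ and, unless $M_{ij}$ is simple (i.e.\ $j=i+2$), the canonical coray-surjection $M_{ij}\twoheadrightarrow M_{i+1,j}$. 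These are exactly the arrows visible in Figure~\ref{fig:ARquiverTn}, and they correspond under $\psi$ precisely to the two families of arrows in the quiver of $\A(\AAA(n))$.

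There is no serious obstacle; the argument is a direct verification. The only care-point is the boundary case $j=i+2$, where $M_{ij}$ is simple and the potential arrow $\pi_n([i,i+2])\to\pi_n([i+1,i+2])$ is correctly suppressed on both sides (on the arc side by the explicit exclusion, and on the module side because $M_{i+1,i+2}$ is not defined as an object of $\T_n$). Once this bookkeeping is in place, the combination of the three checks delivers the claimed isomorphism of translation quivers.
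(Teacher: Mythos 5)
Your verification is correct, but it is worth pointing out that the paper does not actually prove this proposition: it is quoted directly from the literature (Lemma~2.5 of \cite{bama} and 4.18 of \cite{warkentin}), so there is no in-paper argument to compare against. What you supply is the self-contained check that those sources carry out: the vertex bijection (finite arcs $\leftrightarrow$ finite-length indecomposables $M_{ij}$), the tautological compatibility $\psi\circ\tau=\tau\circ\psi$ from the two defining formulas $\tau\pi_n([i,j])=\pi_n([i-1,j-1])$ and $\tau M_{ij}=M_{i-1,j-1}$, and the matching of arrows, including the boundary case $j=i+2$ where $M_{i+1,j}$ degenerates to zero. All of this is sound. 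The one step you state rather than justify is that the ray-inclusion $M_{ij}\hookrightarrow M_{i,j+1}$ and the coray-surjection $M_{ij}\twoheadrightarrow M_{i+1,j}$ are \emph{exactly} the irreducible morphisms out of $M_{ij}$, each contributing a single arrow; this is the real content of the proposition, and it follows from the standard form of the almost split sequences in a stable tube, namely $0\to M_{ij}\to M_{i,j+1}\amalg M_{i+1,j}\to M_{i+1,j+1}\to 0$ (with the middle term reducing to $M_{i,j+1}$ when $j=i+2$). Appealing to Figure~\ref{fig:ARquiverTn} is appealing to the conclusion rather than deriving it, so if you want the argument to be fully self-contained you should cite or exhibit these AR-sequences; otherwise your proof is, in effect, the same verification the cited references perform.
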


Note that the convention that $M_{\sigma^k(i),\sigma^k(j)}=M_{ij}$ corresponds exactly
to the fact $\pi_n([\sigma^k(i),\sigma^k(j)])=\pi_n([i,j])$ for any integer $k$.

For arcs $\alpha, \beta$ in $\A(\AAA(n))$, let
$I(\alpha, \beta)$ be the minimum number of intersections between arcs in the isotopy classes $\alpha$ and
$\beta$, not allowing non-transverse or multiple intersections. Similarly we let $I^+(\alpha, \beta)$ (resp. $I^-(\alpha, \beta)$) denote the number of
positive (resp. negative) crossings between $\alpha$ and $\beta$ (see Figure~\ref{fig:neg} for an example of a negative crossing).
We will now prove the following result:

\begin{theorem}\label{exts}
Given indecomposable objects $M_{ij}$ and $M_{i'j'}$ in
$\overline{\T}$. Then:
$$\Ext^1(M_{ij},M_{i'j'}) \cong \prod_{I^-(\pi_n([i,j]),
  \pi_n([i',j']))} K .$$
\end{theorem}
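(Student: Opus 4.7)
The plan is to argue case-by-case depending on which of the three types (finite length, Pr\"ufer, or adic) each of $M_{ij}$ and $M_{i'j'}$ belongs to. The base case is when both modules lie in $\T$, so that $\pi_n([i,j])$ and $\pi_n([i',j'])$ are finite arcs in $\A$. Here the statement is the known result recorded in~\cite{bama, warkentin}: one lifts the two arcs to $\UUU(n)$, uses that $I^-(\pi_n([i,j]),\pi_n([i',j']))$ equals the total number of negative crossings of $[i,j]$ with the $\sigma$-translates $[i'+kn, j'+kn]$, and then identifies each such crossing with the basis vector of $\Ext^1$ given by the non-split extension described before Figure~\ref{fig:split-sum}. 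I would quote this and proceed to the new cases.

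For the mixed cases where exactly one of the two modules is infinite, the strategy is to exploit the direct/inverse limit descriptions together with pure-injectivity. If $M_{ij}$ is finite and $M_{i',j'} = M_{i',\infty}$ is Pr\"ufer, I would use that $M_{ij}$ is finitely presented, so $\Ext^1(M_{ij}, -)$ commutes with the direct limit $M_{i',\infty} = \varinjlim_t M_{i',i'+t}$; the base case then applies to each finite term, and the right-hand side stabilises because once $t$ is large, any additional crossings between $[i',i'+t]$ and the translates of $[i,j]$ are all positive (the end-point of the arc just advances further in the same direction). If instead $M_{ij}$ is finite and $M_{i',j'}$ is adic, I would reduce to the Pr\"ufer case by applying the Auslander-Reiten duality $\Hom(Y,\tau X) \cong D\Ext^1(X,Y)$, extended to $\overline{\T}$, combined with the fact that $\tau$ swaps the ray/coray indexing and reverses the role of direct/inverse limits; equivalently, one can run the symmetric argument with inverse limits in the first variable, which is permissible because the target $M_{ij}$ is pure-injective.

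In the remaining cases, where both $M_{ij}$ and $M_{i'j'}$ belong to $\overline{\T}\setminus \T$, the right-hand side may be an infinite-dimensional product of copies of $K$. The argument should write one of the modules as an appropriate limit of finite-length modules, apply the previous case dimension-wise, and use that the pure-injectivity of all objects in $\overline{\T}$ converts a direct limit in the first argument of $\Ext^1$ into an inverse limit (hence a product) on the output, matching the product on the right-hand side. Geometrically one has to verify that the stabilised crossing count equals $I^-(\pi_n([i,j]),\pi_n([i',j']))$; for two Pr\"ufer arcs, or two adic arcs, the relevant lifts in $\UUU(n)$ are parallel infinite rays, so one gets a (possibly infinite but controlled) tally of crossings with the $\sigma$-translates, and for one Pr\"ufer and one adic the two families of infinite arcs spiral in opposite orientations, producing the expected pattern of negative crossings.

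The main obstacle is the infinite-dimensional case just discussed: one must interchange $\Ext^1$ with the defining limits of Pr\"ufer and adic modules while keeping track of whether each crossing in $\AAA(n)$ is positive or negative. Direct limits in the second variable behave well against a finitely presented first variable, but the symmetric statements (inverse limits in the first variable, or both variables being limits) rely crucially on the pure-injectivity assertion recalled earlier. Once the limit interchanges are justified, the geometric matching is a careful but combinatorial check performed in the universal cover $\UUU(n)$, where the deck transformation $\sigma$ enumerates the crossings that are collapsed by $\pi_n$.
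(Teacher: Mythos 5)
Your overall architecture (a case analysis by type, citing \cite{bama,warkentin} for the finite--finite case, and limit arguments for the remaining cases) matches the paper's, but two of your steps do not work as described. First, for $\Ext^1(M_{ij},M_{i',\infty})$ with $M_{ij}$ finite, you claim the direct system $\Ext^1(M_{ij},M_{i',i'+t})$ ``stabilises'' because any additional crossings of $[i',i'+t]$ with translates of $[i,j]$ are positive. That is not what happens: as $t$ grows, the negative crossing with a fixed translate $[i+kn,j+kn]$ (which requires $i'+kn<i<i'+t+kn<j$) disappears once $i'+t+kn$ passes $j$, but new negative crossings appear with translates further to the left, so $\dim\Ext^1(M_{ij},M_{i',i'+t})$ does not tend to zero in general. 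The colimit vanishes only because every class is eventually annihilated by the transition maps, which your argument does not verify. The paper avoids this entirely by applying the Auslander--Reiten formula of Lemma~\ref{ARformula} to convert the Ext-group into $\Hom(M_{i',\infty},\tau M_{ij})$, which vanishes by Lemma~\ref{homvanishing}(a); you should do the same (and dually for $\Ext^1(M_{-\infty,i'},M_{ij})$), or else track the transition maps explicitly.

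Second, your treatment of the doubly infinite cases is not supported by the available lemmas. The interchange statements of Lemma~\ref{limits} cover direct limits in the first variable against a pure-injective target, and inverse limits of finitely generated modules in the second variable; neither applies to $\Ext^1(M_{-\infty,i},M_{i',\infty})$, where the first argument is an inverse limit and the second a direct limit, and there is no general commutation of $\Ext^1$ with inverse limits in the first variable. The paper quotes \cite[Lemma 2.7]{bk1} for the vanishing of all Ext-groups between Pr\"ufer and adic modules other than $\Ext^1(M_{i,\infty},M_{-\infty,i'})$, and for that one nonzero case it uses Lemma~\ref{limits}(e) together with the specific observation that the resulting inverse system consists of finite-dimensional spaces with surjective transition maps and unbounded dimensions, whence the limit is $\prod_{\aleph_0}K$. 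This last point is essential --- the theorem asserts a product, not a direct sum --- and your sketch does not supply it. Your geometric discussion of the infinite arcs addresses only the right-hand side of the theorem (the crossing numbers of Proposition~\ref{crossingnumbers}), not the computation of the Ext-groups themselves.
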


In the case where $i,i',j,j'$ are all finite, the result is proved in
\cite[Thm.\ 3.7]{bama},~\cite[Thm. 4.23]{warkentin}.
See also \cite{bz} for further results in this direction.
We first recall some results we will need:

\begin{lemma}\label{limits}
Let $X,Y$ be arbitrary $\Lambda$-modules,
$(X_j)_j$ an arbitrary filtered direct system of modules and
$(Y_j)_j$ an arbitrary filtered inverse system of modules.
\begin{itemize}
\item[(a)] $\Hom(\varinjlim X_j, Y) \simeq \varprojlim \Hom(X_j,Y)$.
\item[(b)] If $X$ is finitely generated, then $\Hom(X,\varinjlim Y_j)\simeq \varinjlim \Hom(X,Y_j)$
\item[(c)] If the $Y_j$ are finitely generated, then
$\varprojlim Y_j\simeq D\varinjlim DY_j$.
\item[(d)] If $Y$ is pure-injective, then
$\Ext^1(\varinjlim X_j, Y) \simeq \varprojlim \Ext^1(X_j,Y)$.
\item[(e)] If the $Y_j$ are finitely generated, then
$\Ext^1(X, \varprojlim Y_j) \simeq \varprojlim \Ext^1(X,Y_j)$.
\end{itemize}
\end{lemma}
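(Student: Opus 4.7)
The plan is to treat the five parts separately, using standard categorical facts for parts (a)--(c) and pure-injective module theory for parts (d) and (e).

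Part (a) is a direct instance of the universal property of the direct limit: a morphism $\varinjlim X_j \to Y$ is the same data as a compatible family of morphisms $X_j\to Y$, which is an element of $\varprojlim \Hom(X_j,Y)$. Part (b) uses that $\Lambda$ is noetherian (being finite dimensional over $K$), so every finitely generated $\Lambda$-module is finitely presented; the functor $\Hom(X,-)$ commutes with filtered colimits whenever $X$ is finitely presented, a standard fact which amounts to observing that any morphism from $X$ to a filtered colimit factors through a finite stage, and that two such factorisations agree after passing further along the system. Part (c) uses that a finitely generated module over the finite-dimensional algebra $\Lambda$ is automatically finite dimensional over $K$, so that $D$ restricts to an exact contravariant equivalence on the $Y_j$; since any equivalence interchanges filtered direct and inverse systems, we obtain $D\varinjlim DY_j\simeq \varprojlim DDY_j\simeq \varprojlim Y_j$.

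For (d), I plan to use the pure-exact presentation
$$0 \rightarrow \bigoplus_{\alpha\colon i\to j} X_i \rightarrow \bigoplus_j X_j \rightarrow \varinjlim X_j \rightarrow 0$$
of a filtered colimit, with the left map given by the familiar ``identity minus transition'' formula. This sequence is pure-exact in $\Module \Lambda$, so applying $\Hom(-,Y)$ to it preserves exactness because $Y$ is pure-injective. Combining this with the identities $\Hom(\bigoplus X_i,Y)=\prod \Hom(X_i,Y)$ and $\Ext^1(\bigoplus X_i,Y)=\prod \Ext^1(X_i,Y)$, which for $\Ext^1$ uses pure-injectivity of $Y$, one identifies the terms of the resulting exact sequence with inverse limits and reads off both $\Hom(\varinjlim X_j,Y)\simeq \varprojlim \Hom(X_j,Y)$ and the claimed isomorphism for $\Ext^1$. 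This is essentially the argument of~\cite{jensenlenzing89,prest09}, which I expect to invoke by citation rather than reproducing in detail.

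For (e), the cleanest route is to reduce to (d) via (c). Writing $\varprojlim Y_j = D\varinjlim DY_j$ and invoking the Ext--Tor duality $\Ext^1_\Lambda(X,DN)\simeq D\operatorname{Tor}_1^\Lambda(N,X)$, which is valid because $\Lambda$ is finite dimensional over $K$, the claim follows from the fact that $\operatorname{Tor}_1^\Lambda(-,X)$ commutes with filtered colimits and $D$ exchanges direct with inverse limits. I expect part (d) to be the main obstacle: the passage from the $\Hom$ identity to the $\Ext^1$ identity depends essentially on pure-injectivity of $Y$ and has no purely formal analogue obtainable from manipulations with limits alone.
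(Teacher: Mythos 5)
Your proposal is correct. Parts (a), (b) and (d) are handled exactly as in the paper, which simply cites the literature for these standard facts (Trlifaj for (a), Krause--Solberg or Crawley-Boevey for (b), Auslander for (d)); your sketch of (d) via the pure-exact presentation of a filtered colimit is precisely the proof behind those citations. For (c) you arrive at the same chain $D\varinjlim DY_j\simeq\varprojlim DDY_j\simeq\varprojlim Y_j$ as the paper, but your stated justification of the first isomorphism --- that a duality on finite-dimensional modules interchanges filtered direct and inverse systems --- does not literally apply, since $\varinjlim DY_j$ need not be finite dimensional and the finite-dimensional modules are not closed under these (co)limits; the paper instead deduces this step from part (a), reading $D\varinjlim DY_j$ as $\Hom(\varinjlim\Hom(Y_j,K),K)$, and you should phrase it the same way (the repair is immediate). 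The genuine divergence is in (e): the paper writes $\varprojlim Y_j\simeq D\varinjlim DY_j$ and then uses the identity $\Ext^1(X,DY)\simeq\Ext^1(Y,DX)$ together with part (d) and the fact that $DX$ is pure-injective for any module $X$, whereas you use the Ext--Tor duality $\Ext^1(X,DN)\simeq D\operatorname{Tor}_1(N,X)$ and the fact that $\operatorname{Tor}_1(-,X)$ commutes with filtered colimits. Both routes are valid: yours avoids invoking (d) and the pure-injectivity of duals at the cost of bringing in Tor, while the paper's stays entirely within Ext and reuses the already-established part (d).
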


\begin{proof} For (a) see, for example,~\cite{trlifaj03}.
For (b), see~\cite[Lemma 1.6]{krausesolberg03}
or~\cite[Sect. 1.5]{crawleyboevey98}.
For (c), we have, using part (a):
\begin{align*}
D\varinjlim DY_j &= \Hom(\varinjlim \Hom(Y_j,K),K) \\
&\simeq \varprojlim \Hom(\Hom(Y_j,K),K)
\simeq \varprojlim DDY_j \simeq \varprojlim Y_j,
\end{align*}
as required.
Part (d) is proved in~\cite[Prop.\ I.10.1]{auslander78}.
For (e), we recall that $\Ext^1(X,DY)\simeq \Ext^1(Y,DX)$ for all modules
$X$ and $Y$. Using parts (c) and (d)
and the fact~\cite[Prop.\ 4.3.29]{prest09} that $DX$ is pure-injective for
any module $X$, we have:
\begin{align*}
\Ext^1(X,\varprojlim Y_j) &\simeq \Ext^1(X,D(\varinjlim DY_j))
\simeq \Ext^1(\varinjlim DY_j,DX) \\
&\simeq \varprojlim \Ext^1(DY_j,DX)
\simeq \varprojlim \Ext^1(X,DDY_j) \\
&\simeq \varprojlim \Ext^1(X,Y_j),
\end{align*}
and (e) is shown.
\end{proof}

We also need the following (see e.g.~\cite[Sect.\ 3.1]{crawleyboevey98}).

\begin{lemma} \label{ARformula}
For modules $X$ and $Y$ with $X$ finitely generated, we have
$D\Ext^1(X,Y)\cong \Hom(Y,\tau X)$ and $\Ext^1(Y,X)\cong D\Hom(\tau^{-1}X,Y)$.
\end{lemma}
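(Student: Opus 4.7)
The plan is to bootstrap from the classical Auslander--Reiten formula for finitely generated modules over the hereditary Artin algebra $\Lambda$, and then promote it to arbitrary $Y$ using the direct-limit/pure-injectivity machinery collected in Lemma~\ref{limits}. The finite-dimensional version---that is, $D\Ext^1(X,Y)\cong \Hom(Y,\tau X)$ and $\Ext^1(Y,X)\cong D\Hom(\tau^{-1}X,Y)$ when both $X$ and $Y$ are finitely generated---holds in our setting without any quotient by injectively-factoring maps because $\Lambda$ is hereditary: the bounded derived category $D^b(\module\Lambda)$ admits $\tau[1]$ as a Serre functor, and Serre duality then delivers both formulas directly (with $\tau X=0$ when $X$ is projective and $\tau^{-1}X=0$ when $X$ is injective, in which cases both sides vanish).

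For the general case, fix $X$ finitely generated and write an arbitrary $\Lambda$-module $Y$ as a filtered direct limit $Y=\varinjlim Y_j$ of its finitely generated (equivalently, finite-length) submodules. For the first formula, the key input is that a finitely generated module over the Noetherian algebra $\Lambda$ is finitely presented, so $\Ext^1(X,-)$ commutes with filtered direct limits. Since each $\Ext^1(X,Y_j)$ is then finite-dimensional, Lemma~\ref{limits}(c) turns $D$ of a direct limit into an inverse limit of duals, and Lemma~\ref{limits}(a) folds the inverse limit back into a $\Hom$:
\begin{align*}
D\Ext^1(X,Y) &\cong D\varinjlim \Ext^1(X,Y_j)\cong \varprojlim D\Ext^1(X,Y_j) \\
&\cong \varprojlim \Hom(Y_j,\tau X)\cong \Hom(Y,\tau X).
\end{align*}

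For the second formula, observe that $X$, being finitely generated over an Artin algebra, has finite length and is therefore pure-injective; hence Lemma~\ref{limits}(d) gives $\Ext^1(Y,X)\cong \varprojlim \Ext^1(Y_j,X)$. Combining this with the finite-dimensional formula and then invoking Lemma~\ref{limits}(c) (each $\Hom(\tau^{-1}X,Y_j)$ is finite-dimensional) and Lemma~\ref{limits}(b) (since $\tau^{-1}X$ is finitely generated) yields
\begin{align*}
\Ext^1(Y,X) &\cong \varprojlim D\Hom(\tau^{-1}X,Y_j)\cong D\varinjlim \Hom(\tau^{-1}X,Y_j) \\
&\cong D\Hom(\tau^{-1}X,\varinjlim Y_j)\cong D\Hom(\tau^{-1}X,Y).
\end{align*}
The main obstacle is matching the right hypothesis to the right slot: the first formula requires $X$ to be finitely presented in order to push $\Ext^1(X,-)$ past $\varinjlim$, while the second requires $X$ to be pure-injective in order to move $\Ext^1(-,X)$ past $\varinjlim$ in the first argument. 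Both conditions hold automatically because $X$ is a finite-length module over a finite-dimensional algebra, so no further delicacy is needed beyond this careful use of Lemma~\ref{limits}.
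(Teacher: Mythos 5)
Your proposal is correct. The paper itself gives no argument for this lemma at all --- it is stated with a bare citation to Crawley-Boevey's survey (Section 3.1 of that reference), where the generalised Auslander--Reiten formula for $X$ finitely presented and $Y$ arbitrary is recorded. So what you have done is supply a self-contained proof, and it is a good one: you reduce to the classical finite-dimensional AR formula (which, as you note, holds in the clean form $D\Ext^1(X,Y)\cong\Hom(Y,\tau X)$ without passing to the stable or costable $\Hom$ because $\Lambda$ is hereditary, so $\tau X$ has no injective summands and $\tau^{-1}X$ no projective summands), and then you pass to arbitrary $Y=\varinjlim Y_j$ using exactly the limit-compatibility statements the paper collects in Lemma~\ref{limits}: part (a) to fold $\varprojlim\Hom(Y_j,\tau X)$ back into $\Hom(Y,\tau X)$, part (d) with the pure-injective finite-length module $X$ in the second slot, and part (b) with the finitely generated module $\tau^{-1}X$ in the first slot. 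The one ingredient not covered by Lemma~\ref{limits} is the commutation $\Ext^1(X,\varinjlim Y_j)\cong\varinjlim\Ext^1(X,Y_j)$; your justification (a finitely generated module over the Noetherian algebra $\Lambda$ admits a projective resolution by finitely generated projectives, so $\Ext^1(X,-)$ preserves filtered colimits) is the standard and correct one. Two cosmetic remarks: the step $D\varinjlim V_j\cong\varprojlim DV_j$ for vector spaces is most directly Lemma~\ref{limits}(a) applied with target $K$ rather than part (c), though (c) also yields it since each $V_j$ is finite-dimensional; and one should note (as is standard) that the classical AR isomorphisms are natural in the second argument, so that they are compatible with the transition maps of the limit systems.
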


Note that if $X,Y$ are finitely generated, then the first formula can also
be written $\Ext^1(X,Y)\cong D\Hom(Y,\tau X)$. We recall the following
(see, for example,~\cite[p46]{ringel00}).

\begin{lemma} \label{homvanishing}
\begin{itemize}
\item[(a)] If $X$ is a Pr\"{u}fer module and $Y$ is a finitely
generated module then $\Hom(X,Y)=0$.
\item[(b)] If $X$ is a finitely generated module and $Y$ is an adic
module then $\Hom(X,Y)=0$.
\end{itemize}
\end{lemma}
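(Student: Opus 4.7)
My approach is to prove each part directly by exploiting the structure of proper quotients (for (a)) and, via $K$-duality, proper submodules (for (b)) of the infinite-dimensional modules involved. Throughout, note that a finitely generated module over the finite-dimensional algebra $\Lambda$ is the same as a finite-dimensional $K$-module.

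For (a), let $f \colon M_{i,\infty} \to Y$ be an arbitrary homomorphism. Its image $f(M_{i,\infty})$ sits inside $Y$ and is therefore finite-dimensional, while at the same time being a quotient of the Pr\"{u}fer module $M_{i,\infty}$. The content of the proof is the claim that every proper quotient of $M_{i,\infty}$ is infinite-dimensional, which forces $f(M_{i,\infty})$, and hence $f$, to be zero. I would establish this claim by first verifying that $M_{i,\infty}$ is uniserial, with submodule lattice
\[
0 \subset M_{i,i+2} \subset M_{i,i+3} \subset \cdots \subset M_{i,\infty},
\]
using that each $M_{i,i+t}$ is uniserial inside the serial category $\T_n$ and that any submodule of the filtered colimit $M_{i,\infty} = \varinjlim M_{i,i+t}$ is the union of its intersections with the $M_{i,i+t}$. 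Then the composition factors of the proper quotient $M_{i,\infty}/M_{i,i+t}$ form the infinite sequence $M_{i+t-1,i+t+1}, M_{i+t,i+t+2}, \ldots$, so the quotient is infinite-dimensional.

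For (b), I would reduce to part (a) by means of the $K$-duality $D = \Hom_K(-,K)$. This induces an exact contravariant equivalence between $\module \Lambda$ and $\module \Lambda^{\mathrm{op}}$, where $\Lambda^{\mathrm{op}}$ is again the path algebra of a Euclidean quiver of type $\widetilde{\mathrm{A}}_{1,n}$; under this equivalence the tube $\T_n \subset \module \Lambda$ corresponds to a tube in $\module \Lambda^{\mathrm{op}}$. By Lemma~\ref{limits}(c), $M_{-\infty,i} \cong D\bigl(\varinjlim DM_{i-t,i}\bigr)$, so $DM_{-\infty,i}$ is a Pr\"{u}fer module in the dual tube. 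Hence
\[
\Hom_\Lambda(X, M_{-\infty,i}) \cong \Hom_{\Lambda^{\mathrm{op}}}(DM_{-\infty,i}, DX),
\]
and part (a), applied to $\Lambda^{\mathrm{op}}$ with the finite-dimensional module $DX$, gives that this Hom vanishes. The main obstacle will be the careful verification of uniseriality of $M_{i,\infty}$ and of the correspondence between Pr\"{u}fer and adic modules under $D$; both facts are folklore for tubes but should be read off from the explicit direct/inverse limit descriptions recalled above.
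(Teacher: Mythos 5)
The paper does not prove this lemma (it is recalled from Ringel's survey), so there is no internal proof to compare with; judged on its own terms, your argument for (a) has a fatal gap. The claim that the submodule lattice of $M_{i,\infty}$ is the chain $0\subset M_{i,i+2}\subset M_{i,i+3}\subset\cdots$ confuses subobjects in the serial category $\T_n$ with $\Lambda$-submodules. Uniseriality of $M_{i,i+t}$ \emph{in $\T_n$} only describes its quasi-submodules; the quasi-simple objects of $\T_n$ are not simple $\Lambda$-modules (e.g.\ $L$ has $\Lambda$-composition factors $S_1,S_{n+1}$), so $M_{i,i+t}$, and hence $M_{i,\infty}$, has many $\Lambda$-submodules other than the $M_{i,i+s}$. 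In fact the conclusion you draw from it is false: since vertex $n+1$ is a source, $S_{n+1}$ is the simple injective and $\Hom_\Lambda(M,S_{n+1})\cong D(M_{n+1})$ for \emph{every} $\Lambda$-module $M$; as $(M_{i,\infty})_{n+1}\neq 0$, the Pr\"ufer module has the one-dimensional quotient $S_{n+1}$. This also shows that the lemma as literally stated fails for preinjective $Y$ (dually, (b) fails for $X=S_1$ preprojective); it is correct, and is only ever applied in the paper, with $Y$ (resp.\ $X$) an object of $\T$, and any proof must use that restriction.

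For $Y=M_{a,b}$ in $\T$ the standard argument is different from yours: by Lemma~\ref{limits}(a), $\Hom(M_{i,\infty},Y)\cong\varprojlim_t\Hom(M_{i,i+t},Y)$, and this inverse system is pro-zero. Indeed, the image of any homomorphism $M_{i,i+t}\to Y$ between objects of the tube is again in the tube, hence is a quasi-quotient of $M_{i,i+t}$ of quasi-length at most $\ell(Y)$, so the map vanishes on the quasi-submodule $M_{i,i+s}$ whenever $s\le t-\ell(Y)$; thus the restriction maps $\Hom(M_{i,i+t},Y)\to\Hom(M_{i,i+s},Y)$ are eventually zero and the limit vanishes. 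Part (b) is proved dually using $\Hom(X,\varprojlim M_{j,i})\cong\varprojlim\Hom(X,M_{j,i})$ and the dual bound on images, or by your duality reduction once (a) is correctly established — though there you should use the adjunction $\Hom_\Lambda(X,DN)\cong\Hom_{\Lambda^{\mathrm{op}}}(N,DX)$ with $N=\varinjlim DM_{i-t,i}$ directly, since $DM_{-\infty,i}=DD(\varinjlim DM_{i-t,i})$ is only a Pr\"ufer module up to a direct complement (the double dual is not the identity on infinite-dimensional modules).
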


With arguments as in \cite{bama}, the crossing numbers can now be computed as follows.
Recall that $\sigma \colon \mathbb{Z} \to  \mathbb{Z}$ is the function $i \mapsto i+n$.

\begin{proposition}\label{crossingnumbers}
We have the following:
\begin{itemize}
\item[(a)]  $I^-(\pi_n([i,\infty]), \pi_n([a,b])) = \mid \{m \in
  \mathbb{Z}  \colon a < \sigma^m(i)< b \} \mid$;
\item[(b)] $I^-(\pi_n([a,b]), \pi_n([i,\infty])) = I^+(\pi_n([i,\infty]), \pi_n([a,b])) = 0$;
\item[(c)] $I^-(\pi_n([-\infty,j]), \pi_n([a,b]))= 0$;
\item[(d)] $I^- (\pi_n([a,b]),\pi_n([-\infty,j])) =
I^+(\pi_n([-\infty,j]), \pi_n([a,b]))
= \mid \{m \in  \mathbb{Z}  \colon a < \sigma^m(i)< b \} \mid$;
\item[(e)] $I^-(\pi_n([i,\infty]), \pi_n([-\infty, i'])) = \aleph_0$,
  for all $i,i'$ in $\{0,\dots, n-1\}$;
\item[(f)] $ I^-(\pi_n([i,\infty]),
    \pi_n([i',\infty])) = 0$ for all $i,i'$ in $\{0,\dots, n-1\}$;
\item[(g)] $I^-(\pi_n([-\infty, i]), \pi_n([i',\infty])) =  0$ for all $i,i'$ in $\{0,\dots, n-1\}$;
\item[(h)] $I^-(\pi_n([-\infty,
i]), \pi_n([-\infty, i'])) = 0$ for all  $i,i'$ in $\{0,\dots, n-1\}$.
\end{itemize}
\end{proposition}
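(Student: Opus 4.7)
The plan is a direct geometric case analysis carried out in the universal cover $\UUU(n)$. Lift the first arc $\pi_n(\alpha)$ to one fixed representative $\tilde\alpha \subset \UUU(n)$; the lifts of $\pi_n(\beta)$ are then the translates $\tilde\beta_m$ for $m \in \mathbb{Z}$, where $\sigma$ acts by translation by $n$ on integer endpoints and fixes $\pm\infty$. Following the sign convention recorded in Section~\ref{s:Dynkin} (see Figure~\ref{fig:neg}), two arcs $[a_1,b_1]$ and $[a_2,b_2]$ drawn above the integer line in $\UUU(n)$ have a negative crossing exactly when $a_2 < a_1 < b_2 < b_1$, and a positive crossing exactly when $a_1 < a_2 < b_1 < b_2$. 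With the convention $-\infty < z < \infty$ for every integer $z$, this criterion applies uniformly to finite and infinite arcs, so each $I^-$ or $I^+$ becomes a count of integers $m$ satisfying a simple inequality.

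For (a), with $\tilde\alpha=[i,\infty]$ and $\tilde\beta_m=[\sigma^m(a),\sigma^m(b)]$, the negative-crossing condition reduces to $\sigma^m(a) < i < \sigma^m(b)$ (the inequality $\sigma^m(b)<\infty$ is automatic), which rearranges to $a < \sigma^{-m}(i) < b$ and, after the substitution $m \mapsto -m$, yields the stated cardinality. In (b) the inequality for $\tilde\alpha=[a,b]$, $\tilde\beta_m=[\sigma^m(i),\infty]$ would force $\infty < b$ and hence no negative crossings; the positive-crossing count vanishes by the symmetric inequality. Parts (c) and (d) are dual: adic lifts $[-\infty,\sigma^m(j)]$ cannot supply the required middle inequality when used as the second argument, while with the arguments reversed one obtains the same type of count as in (a), producing $\{m : a<\sigma^m(j)<b\}$.

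For (e), the lifts $[-\infty,\sigma^m(i')]$ of $\pi_n([-\infty,i'])$ cross $[i,\infty]$ negatively precisely when $-\infty < i < \sigma^m(i') < \infty$, i.e.\ whenever $\sigma^m(i') = i' + mn > i$, which holds for all sufficiently large $m$; this gives $\aleph_0$ crossings. Cases (f), (g), (h) all feature two arcs sharing a common end at $+\infty$ or at $-\infty$, so the defining inequality degenerates into a strict comparison between two equal copies of $\pm\infty$; no translate $m$ can satisfy it, yielding $I^- = 0$ in each case.

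The only substantive step is the preliminary justification that for the spiralling arcs representing Pr\"ufer and adic modules the geometric intersection number $I^-$ in $\AAA(n)$ still coincides with the count of $\sigma$-translated lifts whose endpoints interleave in $\UUU(n)$ — this is the extension of~\cite[Lemma 2.5]{bama} from finite to infinite arcs. I would dispatch it by choosing standard representatives (a monotone spiral in the annulus, lifting to a horizontal ray in $\UUU(n)$) and checking that these realise the minimum transverse intersection number with any other representative, finite or infinite. Once this extension is in place, the case-by-case enumeration sketched above completes the proof.
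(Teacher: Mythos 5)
Your proposal is correct and follows the same route the paper intends: the paper states this proposition without proof, deferring to ``arguments as in \cite{bama}'', which is precisely the universal-cover lift-and-count method you use, and your case-by-case enumeration (including your implicit correction that the $i$ in part (d) should read $j$) checks out against the sign convention of Figure~\ref{fig:neg}. The one step you only sketch --- that standard spiralling representatives of the infinite arcs realise minimal position in $\AAA(n)$, so that $I^{\pm}$ is computed by counting interleaved translates in $\UUU(n)$ --- is exactly the extension of the finite-arc case that the paper also leaves implicit.
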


\begin{proof}[Proof of Theorem \ref{exts}]
We need to compute $\Ext^1(X,Y)$ for all pairs of indecomposables
$X,Y$ in $\overline{\T}$, and compare these with the crossing-numbers
from Proposition \ref{crossingnumbers}.
We first determine $\Ext^1(M_{i,\infty}, M_{ab})$. By Lemma~\ref{ARformula},
$$\Ext^1(M_{i,\infty},M_{ab})\cong D\Hom(M_{a+1,b+1},M_{i,\infty}).$$
Any morphism $f:M_{a+1,b+1}\rightarrow M_{i,\infty}$ must factor through the
unique submodule of $M_{i,\infty}$ of length $b-a-1$, which is isomorphic to
$M_{i,i+b-a}$.
Hence $\Hom(M_{a+1,b+1},M_{i,\infty})\cong \Hom(M_{a+1,b+1},M_{i,i+b-a})$.
Thus $\Ext^1(M_{i,\infty},M_{ab})$ equals the dimension of this last space,
i.e.\ the number of times the simple top $M_{b-1,b+1}$ of $M_{a+1,b+1}$
appears as a composition factor in $M_{i,i+b-a}$, which, using arguments
as in~\cite{bama}, is given by
$$|\{n\in\mathbb{Z}\,:\,a<\sigma^n(i)<b\}|.$$

Using a dual argument, we obtain:
$$\dim \Ext^1(M_{i,-\infty}, M_{ab}) =
\mid \{n \in \mathbb{Z} \colon a < \sigma^n(i) <b \} \mid.$$

By Lemma~\ref{ARformula},
$$D\Ext^1(M_{ab}, M_{i, \infty}) \simeq \Hom(M_{i,\infty},M_{a-1, b-1}).$$
We see that $\Ext^1(M_{ab},M_{i,\infty})=0$ by Lemma~\ref{homvanishing}(a).

Similarly, by Lemma~\ref{ARformula}, we have that
$$\Ext^1(M_{-\infty,i}, M_{ab}) \simeq D\Hom(\tau^{-1}M_{ab},M_{-\infty,i})=0,$$
using Lemma~\ref{homvanishing}(b).

We next compute $\Ext^1(M_{i,\infty}, M_{-\infty,i'})$. Using Lemma
\ref{limits}(e), we have that
$$\Ext^1(M_{i,\infty}, M_{-\infty,i'}) = \Ext^1(M_{i,\infty},
\varprojlim M_{j,i'}) \simeq \varprojlim \Ext^1(M_{i,\infty}, M_{j,i'}),$$
where $j$ is the dummy variable in the limit.
The maps
$$\Ext^1(M_{i,\infty}, M_{j,i'}) \to \Ext^1(M_{i,\infty},
M_{j-1,i'})$$
are surjective. As $j$ tends to $-\infty$, the dimension of
$\Ext^1(M_{i,\infty}, M_{j,i'})$ is unbounded (by the above formula for it).
Therefore, this limit evaluates to $\prod_{\aleph_0} K$.

It is shown in \cite[Lemma 2.7]{bk1}, that, for all $i,i'$
in $\{0,\dots ,n-1\}$, all of \linebreak
$\Ext^1(M_{-\infty, i}, M_{i',\infty})$, 
$\Ext^1(M_{i,\infty},M_{i',\infty})$ and $\Ext^1(M_{-\infty, i}, M_{-\infty,i'})$
vanish.

Comparing the crossing numbers with the dimensions of the corresponding
Ext-groups concludes the proof of the theorem.
\end{proof}

\subsection{Reflection}

Recall (see \cite{bk1}) that there is a map  $M \mapsto M^{\vee}$, which gives a bijection
on the indecomposable objects in $\overline {\T}$. With our notation,
the map is given by $M_{ij} \mapsto M_{-j,-i}$ for $i,j$ in
$\mathbb{Z} \cup \{ \pm \infty \}$.

For a subcategory $\X$ of $\overline{\T}$, we let $\X^{\vee}$ denote the
subcategory $\{X^{\vee} \mid X \in \X \}$. The map has the following properties.
\begin{lemma}\label{properties}
\begin{itemize}
\item[(a)] $\dim \Hom(X,Y) = \dim \Hom(Y^{\vee}, X^{\vee})$ for
  $X,Y$ in $\overline{\T}$;
\item[(b)] $\Ext^1(X,Y) = 0$ if and only if $\dim \Ext^1(Y^{\vee},
  X^{\vee}) = 0$ for
  $X,Y$ in $\overline{\T}$;
\item[(c)]$(\tau X)^{\vee} = \tau^{-1} X^{\vee}$ for all
  indecomposables $X$ in $\T$.
\item[(d)] For each object $X$ in $\overline \T$, we have
$(X^{\hperp})^{\vee} = {^{\hperp}(X^{\vee})}$ and
$(X^{\vee})^{\eperp} = (^{\eperp}X)^{\vee}$.
\end{itemize}
\end{lemma}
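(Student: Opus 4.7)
The plan is to prove the four parts in the order (c), (a), (b), (d), since each step feeds into the next. Part (c) is an immediate calculation: from $\tau M_{ij}=M_{i-1,j-1}$, we have $(\tau M_{ij})^{\vee}=M_{-j+1,-i+1}=\tau^{-1}M_{-j,-i}=\tau^{-1}(M_{ij}^{\vee})$.

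For part (a), I would split into cases according to whether each of $X$ and $Y$ lies in $\T$, is a Pr\"ufer module, or is an adic module. When both are in $\T$, the identity $\dim\Hom(M_{ij},M_{kl})=\dim\Hom(M_{-l,-k},M_{-j,-i})$ is a known symmetry of the tube, visible both in its AR-quiver and in the annulus model, where the involution $i\mapsto -i$ induces an orientation-reversing homeomorphism of $\AAA(n)$ that preserves the combinatorial data determining $\dim\Hom$. In the mixed cases, Lemma \ref{homvanishing} disposes of the vanishing ones: for instance, if $X$ is Pr\"ufer and $Y$ finite, then both $\Hom(X,Y)$ and $\Hom(Y^{\vee},X^{\vee})$ vanish, $X^{\vee}$ being adic and $Y^{\vee}$ finite. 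The remaining cases, where one or both of $X,Y$ are infinite-dimensional, are reduced to the finite case term-by-term by expressing the relevant Hom-spaces as appropriate filtered (co)limits via Lemma \ref{limits}.

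Part (b) is then obtained by combining (a) and (c) with the AR formulas in Lemma \ref{ARformula}. When $X$ is finitely generated one has the chain of equivalences
\[
\Ext^1(X,Y)=0 \iff \Hom(Y,\tau X)=0 \iff \Hom((\tau X)^{\vee},Y^{\vee})=0 \iff \Ext^1(Y^{\vee},X^{\vee})=0,
\]
using (a), (c) and the second AR formula. When $X$ is infinite-dimensional, the analogous argument swapping roles (or a direct inspection of Theorem \ref{exts} and Proposition \ref{crossingnumbers}) dispatches the remaining cases; the key point is that the reflection $\vee$ preserves negative crossing numbers up to the reversal of the order of arguments.

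Finally, (d) is formal given (a) and (b): for an indecomposable $Y$, one has $Y\in X^{\hperp}$ iff $\Hom(X,Y)=0$ iff $\Hom(Y^{\vee},X^{\vee})=0$ iff $Y^{\vee}\in{}^{\hperp}(X^{\vee})$, yielding the first equality; the same argument with (b) in place of (a) gives the second. The main obstacle will be (a) in the infinite-dimensional cases, particularly Pr\"ufer-to-adic, where commuting $\Hom$ past the relevant limits requires a careful combination of Lemma \ref{limits}(a), (b), (c) together with pure-injectivity of the indecomposables of $\overline{\T}$; once (a) is in hand, (b) and (d) follow with little further work.
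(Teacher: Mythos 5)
The paper offers no proof of this lemma: the reflection $M\mapsto M^{\vee}$ and its properties are simply quoted from \cite{bk1} (``Recall (see \cite{bk1})\dots''), so there is no in-paper argument to compare yours against. Judged on its own terms, your outline and order of deductions are sound: (c) is the one-line computation you give; (b) for $X$ of finite length follows from (a), (c) and Lemma~\ref{ARformula} exactly as you describe, and for $X$ Pr\"ufer or adic it can be read off from Theorem~\ref{exts} together with the observation that the reflection $x\mapsto -x$ reverses orientation and hence interchanges $I^-(\alpha,\beta)$ with $I^-(\beta^{\vee},\alpha^{\vee})$; and (d) is purely formal from (a) and (b) once one notes that $\vee$ is an involution and that objects of $\overline{\T}$ are determined by their indecomposable summands.

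The thin spot is part (a) in the cases involving an adic module or two infinite-dimensional modules. Lemma~\ref{limits} lets you commute $\Hom$ past a filtered \emph{direct} limit in either variable (under the stated hypotheses), but gives nothing for commuting $\Hom$ past a filtered \emph{inverse} limit, which is what a ``term-by-term reduction to the finite case'' would require for $\Hom(\text{adic},-)$. Moreover $\Hom(\text{adic},\text{Pr\"ufer})$, $\Hom(\text{Pr\"ufer},\text{Pr\"ufer})$ and $\Hom(\text{adic},\text{adic})$ are in general nonzero and of infinite dimension, so no termwise comparison with finite-length Hom-spaces can yield the asserted equality of dimensions; these cases need a genuinely different argument. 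The standard one, and presumably the content of the citation to \cite{bk1}, is that $\vee$ is realised by the $K$-duality $D$ composed with an isomorphism $\Lambda\cong\Lambda^{\mathrm{op}}$ (which carries Pr\"ufer modules to adic modules and acts on indecomposables by $M_{ij}\mapsto M_{-j,-i}$), together with pure-injectivity of the objects of $\overline{\T}$ to control the difference between $Y$ and $DDY$. You should either supply that argument or, like the paper, cite \cite{bk1} for these cases; note that for everything the lemma is actually used for later in the paper, the finite-length instances of (a) together with Lemma~\ref{homvanishing} and part (b) suffice.
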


We will also need the following.

\begin{lemma} \label{l:twistgen}
For a maximal rigid object $\rigid$ in $\overline{\T}$, we have
\begin{itemize}
\item[(a)] $(\Gen {\rigid}\cap \T)^{\vee} = \Cogen({\rigid}^{\vee})\cap \T$;
\item[(b)] $(\Cogen {\rigid}\cap \T)^{\vee} = \Gen({\rigid}^{\vee})\cap \T$.
\end{itemize}
\end{lemma}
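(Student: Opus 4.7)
The plan is to prove (a); part (b) will then follow by applying (a) with $\rigid$ replaced by $\rigid^{\vee}$, since $\vee$ is an involution on $\ind \overline{\T}$.

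I would first reduce (a) to a statement on indecomposables. Because $\T$ is serial, Lemma~\ref{l:uniserialclosure}(b) implies that both $\Gen \rigid \cap \T$ and $\Cogen \rigid^{\vee} \cap \T$ are determined by their indecomposable objects; and the involution $\vee$ preserves $\ind\T$ because $M_{ij}^{\vee}=M_{-j,-i}$ has finite length exactly when $M_{ij}$ does. It is therefore enough to show, for each $X \in \ind \T$, that $X$ is a quotient of a finite direct sum of indecomposable summands of $\rigid$ if and only if $X^{\vee}$ embeds into a finite direct sum of indecomposable summands of $\rigid^{\vee}$.

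Next, I would cut each side down to a single summand. If $X \in \ind\T$ is a quotient of $\bigoplus_i R_i$ with each $R_i$ an indecomposable summand of $\rigid$, then since $X$ is uniserial and hence has a unique maximal proper submodule, the argument for the dual of Lemma~\ref{l:uniserialclosure}(a) still applies even when some $R_i$ lies in $\overline{\T}\setminus \T$, and yields that some single $R_i$ surjects onto $X$. Dually, intersecting kernels along the submodule chain of the uniserial object $X^{\vee}$ shows that $X^{\vee}$ embeds into $\bigoplus_j R'_j$ if and only if it embeds into some single summand $R'_j$. So (a) reduces to checking, for each indecomposable $R \in \ind\overline{\T}$, that the indecomposable finite-length quotients of $R$ correspond under $\vee$ to the indecomposable finite-length subobjects of $R^{\vee}$.

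The remaining step is a case analysis over the three types of indecomposable in $\overline{\T}$. For $R = M_{ij}$ finite length, the indecomposable quotients are $M_{lj}$ for $i\le l\le j-2$, whose $\vee$-images $M_{-j,-l}$ are exactly the indecomposable subobjects of $R^{\vee}=M_{-j,-i}$. For $R = M_{k,\infty}$ Pr\"ufer, the proper submodules are the finite-length $M_{k,k+l}$, so every proper quotient is again Pr\"ufer; thus $R$ has no nonzero finite-length quotient, and dually the adic $R^{\vee}=M_{-\infty,-k}$ has no nonzero finite-length subobject. For $R=M_{-\infty,k}$ adic, the finite-length quotients are the $M_{l,k}$ on the coray $\C_k$, and their $\vee$-images $M_{-k,-l}$ are precisely the indecomposable subobjects of the Pr\"ufer $R^{\vee}=M_{-k,\infty}$. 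The main obstacle is this last step, since it requires an explicit description of the submodule and quotient lattices of Pr\"ufer and adic modules; in particular, the vanishing of finite-length quotients of Pr\"ufer modules (and dually of finite-length subobjects of adic modules) is what ensures that the summands of $\rigid$ lying outside $\T$ match up correctly under $\vee$ with those of $\rigid^{\vee}$ lying outside $\T$. Once these lattice facts are in hand, the case analysis together with the preceding reductions gives (a), and (b) follows by the symmetry argument at the start.
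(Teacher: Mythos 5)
Your overall strategy coincides with the paper's: reduce to indecomposables, reduce generation (resp.\ cogeneration) by $\rigid$ to generation (resp.\ cogeneration) by a single indecomposable summand, and then match quotients with subobjects summand by summand, with the Pr\"ufer and adic summands contributing nothing to $\Gen\rigid\cap\T$ and $\Cogen\rigid^{\vee}\cap\T$ respectively. The paper compresses all of this into an appeal to Lemma~\ref{homvanishing} together with the single-summand generation fact, which it quotes from \cite{bk2}; your deduction of (b) from (a) via the involution and your three-case analysis are consistent with that intended argument.

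The one step that does not work as written is your justification of the single-summand reduction. An indecomposable $X$ of $\T$ is uniserial only as an object of the abelian category $\T$; as a $\Lambda$-module it is in general not local (for quasi-length at least $n+1$ its top is at least two-dimensional, since vertex $n+1$ is the unique source of $Q$), so it need not have a unique maximal proper $\Lambda$-submodule, and $\Gen$ here is taken in $\Module\Lambda$. Hence the argument ``all components have proper image, so all images lie in the maximal submodule'' is not available verbatim. To repair it you must either show that the image of any morphism from an indecomposable of $\overline{\T}$ to $X$ is again an object of $\T$, and hence one of the finitely many $\T$-subobjects of $X$, which do form a chain (for Pr\"ufer sources this image is zero by Lemma~\ref{homvanishing}(a); for finite-length and adic sources a short separate argument is needed), or simply quote the single-summand fact from \cite{bk2} as the paper does. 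The lattice facts you defer at the end --- no nonzero finite-length quotients of a Pr\"ufer module, no nonzero finite-length subobjects of an adic module, and the identification of the finite-length indecomposable quotients of $M_{-\infty,k}$ with the coray $\C_{k}$ --- are exactly Lemma~\ref{homvanishing} together with the standard description of Pr\"ufer and adic modules, so once the reduction is secured the rest of your case analysis goes through.
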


\begin{proof}
This follows directly from the definition of the map using
Lemma~\ref{homvanishing} and the fact that
an indecomposable $X$ in $\T$ is generated by a maximal
rigid object ${\rigid}$ if and only if it is generated by an indecomposable direct
summand in ${\rigid}$ (see also \cite{bk2}).
\end{proof}

\section{Torsion pairs and maximal rigid objects}

In this section we give an improvement of a result from
\cite{bk1}, and discuss a combinatorial interpretation.
The idea is to link maximal rigid objects in $\overline{\T}$ with
torsion pairs in $\T$.

\subsection{A bijection}
A rigid object in $\overline{\T}$ is said to be of {\em Pr\"{u}fer type}
if it has a Pr\"{u}fer module as a direct summand, and it is said to be
of {\em adic type} if it has an adic module as a direct summand.

An object ${\rigid}$ in $\varinjlim \T$ is maximal rigid if and only if it is
cotilting in the category $\varinjlim \T$, in the sense of Colpi~\cite{colpi99}.
This is proved in~\cite{bk2} (see~\cite[1.10]{bk2} and note that,
by~\cite[Lemma 1.2]{bk2}, a finitely presented $\tilde{\Lambda}_n$-module
is maximal rigid if and only if it is a tilting module, where
$\tilde{\Lambda}_n$ denotes the completion of the path algebra of an
oriented $n$-cycle).

We consider equivalence classes of maximal rigid
objects in $\overline{\T}$, where two maximal rigid objects are considered to
be equivalent if they have the same indecomposable direct summands.
The following is proved in~\cite{bk1}.

\begin{proposition}\label{propbk1}
Let ${\rigid}$ be a rigid object in $\overline{\T}$.
\begin{itemize}
\item[(a)] ${\rigid}$ is maximal rigid in $\overline{\T}$ if and only if it has $n$ pairwise nonisomorphic indecomposable direct summands.
\item[(b)] If ${\rigid}$ is maximal rigid, it is either of Pr{\"u}fer type or of adic type, but not both.
\item[(c)] A rigid object of Pr{\"u}fer type lies in the subcategory
$\varinjlim \T \subset \overline{\T}$.
\end{itemize}
\end{proposition}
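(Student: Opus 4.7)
The plan is to prove the incompatibility half of (b) first, use it together with an upper bound on rigid objects in $\T$ to obtain (a), and then derive (c) as a formal consequence of (b).

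I would first observe that a rigid object in $\overline{\T}$ cannot have both a Pr\"{u}fer summand and an adic summand: this is immediate from Theorem~\ref{exts} combined with Proposition~\ref{crossingnumbers}(e), which together give
$$\dim \Ext^1(M_{i,\infty}, M_{-\infty,i'}) = \aleph_0 > 0$$
for every pair $i,i' \in \{0,\ldots,n-1\}$. Hence at most one of the two infinite types can occur as a summand of a rigid object $\rigid$ of $\overline{\T}$.

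For (a), the essential input is the standard fact that a rigid object with all summands in $\T$ has at most $n-1$ pairwise non-isomorphic indecomposable summands. I would establish this by combining the wing description with Lemma~\ref{wingsareA} (which identifies each wing with a linearly oriented type $A$ module category, where the tilting bound is classical) with a $\tau$-periodicity analysis of how summands lying on distinct rays interact, using the Ext-computations of Theorem~\ref{exts}. Together with the incompatibility above, this gives the upper bound $(n-1)+1 = n$ for any rigid object in $\overline{\T}$, so a rigid object with $n$ summands is automatically maximal. Conversely, for a maximal rigid $\rigid$ with $k<n$ summands I would construct an extension: if fewer than $n-1$ summands lie in $\T$, a combinatorial argument on the AR-quiver of the tube (filling out an unused ray) produces a compatible indecomposable within $\T$; otherwise an appropriate Pr\"{u}fer or adic module can be adjoined, using Theorem~\ref{exts} to verify vanishing of the relevant Ext groups and the cotilting characterisation in $\varinjlim \T$ mentioned before the proposition. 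This proves the remaining direction of (a) and simultaneously gives the existence half of (b): a maximal rigid object must contain at least one Pr\"{u}fer or adic summand.

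Finally, (c) is formal: if $\rigid$ is of Pr\"{u}fer type then by (b) it has no adic summand, so each indecomposable summand lies either in $\T \subseteq \varinjlim \T$ or is a Pr\"{u}fer module, hence in $\varinjlim \T$ by construction, giving $\rigid \in \varinjlim \T$. The main obstacle I expect is the extendability step, i.e.\ showing that any rigid object with fewer than $n$ summands admits a further compatible indecomposable; this is where the fine structure of the tube (beyond bare Ext-counts) really enters, while the incompatibility of Pr\"{u}fer and adic types and the upper bound $n$ are essentially formal consequences of Proposition~\ref{crossingnumbers}.
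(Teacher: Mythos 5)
The paper gives no internal proof of Proposition~\ref{propbk1} --- it is quoted from~\cite{bk1} --- so there is no argument of the authors' to compare yours against line by line; I therefore assess your proposal on its own terms. Your opening step is correct and well sourced: Theorem~\ref{exts} together with Proposition~\ref{crossingnumbers}(e) gives $\Ext^1(M_{i,\infty},M_{-\infty,i'})\cong\prod_{\aleph_0}K\neq 0$, so no rigid object has both a Pr\"{u}fer and an adic summand, and this (rather than part (b), which is stated only for \emph{maximal} rigid objects) is what yields part (c) for an arbitrary rigid object of Pr\"{u}fer type.

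There are, however, two genuine gaps. First, your upper bound ``$(n-1)+1=n$'' rests on the false premise that the incompatibility limits a rigid object to at most one infinite summand; it only excludes mixing the two \emph{types}. Since $\Ext^1$ vanishes between any two Pr\"{u}fer modules (Proposition~\ref{crossingnumbers}(f) and Theorem~\ref{exts}), the direct sum of all $n$ Pr\"{u}fer modules is rigid --- indeed maximal rigid --- with $n$ infinite summands, and your count simply does not cover a rigid object with two or more Pr\"{u}fer summands. The correct count goes through the wing decomposition used later in Proposition~\ref{p:ftgt}: if the Pr\"{u}fer summands are $M_{i_r,\infty}$, $r=0,\dots,k-1$, then by Lemma~\ref{perps} the finite part lies in $\bigcap_r\W_{i_r,i_r+n}=\coprod_r\W_{i_r,i_{r+1}}$ (Lemma~\ref{l:wings}), contributing at most $\sum_r(i_{r+1}-i_r-1)=n-k$ summands, whence the total is at most $n$. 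Second, the converse direction of (a) and the existence half of (b) --- that every rigid object with fewer than $n$ summands can be extended, in particular that a maximal rigid object necessarily acquires a Pr\"{u}fer or adic summand --- is the substantive content of the proposition, and you leave it as an acknowledged ``main obstacle'' rather than proving it; moreover your proposed case split (``fewer than $n-1$ summands in $\T$'' versus ``otherwise'') does not match the actual structure, since how many finite summands can still be added is governed by the wing decomposition above, not by the single threshold $n-1$. As it stands the proposal establishes the ``not both'' half of (b) and part (c), but not part (a).
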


We will give a parallel result concerning torsion pairs in $\T$.
A torsion pair $(\Torsion,\Free)$ in $\T$ is said to be of {\em ray type} if
$\Free$ contains at least one ray of $\T$.  It is said to be of {\em coray type} if
$\Torsion$ contains at least one coray of $\T$.
A class of objects (or subcategory) $\X$ of a category $\C$ is said to be {\em generating},
if for each map $f$ in $\C$, there is an object $X$ in $\X$ with
$\Hom(X,f) \neq 0$. Dually, one can define {\em cogenerating} classes
of objects, i.e.\ $\X$ is cogenerating if, for each map $f$ in $\C$,
there is an object $X$ in $\X$ with $\Hom(f,X)\neq 0$.

\begin{lemma}\label{onefinite}
Let $(\Torsion,\Free)$ be a torsion pair in $\T$. Then it is not the case that
both $\Torsion$ and $\Free$ are of finite type.
\end{lemma}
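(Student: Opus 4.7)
The plan is to use the serial structure of $\T$, which forces the torsion and torsion-free parts of any indecomposable to be uniserial, combined with the fact that $\T$ contains indecomposables of arbitrarily large length.

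First I would recall that in a serial (length) category, every subobject and every quotient of a uniserial object is uniserial: the lattice of subobjects of a uniserial object is a chain, and chains are inherited by subobjects and by passing to quotients. In particular, any nonzero subobject or quotient of an indecomposable (hence uniserial) object of $\T_n$ is itself indecomposable.

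Now suppose for contradiction that both $\Torsion$ and $\Free$ are of finite type. Let $N$ be the maximum length (in $\T$) of an indecomposable object appearing in $\Torsion \cup \Free$; this is a well-defined finite integer since both subcategories contain only finitely many indecomposables. Choose an indecomposable $M = M_{i,j} \in \T$ with $j-i-1 > 2N$ (such objects exist because $\T_n$ contains indecomposables $M_{i,i+\ell+1}$ for every $\ell \geq 1$). By the definition of a torsion pair there is a short exact sequence
\begin{equation*}
0 \to \torsion \to M \to \free \to 0
\end{equation*}
with $\torsion \in \Torsion$ and $\free \in \Free$. Since $M$ is uniserial, $\torsion$ is a uniserial subobject of $M$ and $\free$ is a uniserial quotient; each is therefore either zero or an indecomposable of $\T$, and in the latter case belongs to $\Torsion$ or $\Free$ respectively. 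Hence $\text{length}(\torsion) \leq N$ and $\text{length}(\free) \leq N$. But additivity of length in short exact sequences gives $\text{length}(M) = \text{length}(\torsion) + \text{length}(\free) \leq 2N$, contradicting $\text{length}(M) = j-i-1 > 2N$.

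The main (small) obstacle is just justifying that the subobject $\torsion$ and the quotient $\free$ are uniserial, but this follows immediately from the serial structure of $\T_n$ noted after Figure~\ref{fig:ARquiverTn}, so the argument is essentially a length count. No appeal to the geometric model or to the classification of torsion pairs is needed.
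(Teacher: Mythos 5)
Your proof is correct and follows essentially the same route as the paper's: apply the defining short exact sequence of the torsion pair to an indecomposable object, use seriality to conclude that the subobject and quotient are indecomposable (or zero), and contradict the fact that $\T$ contains indecomposables of arbitrarily large length. Your version merely makes the final length-counting step explicit where the paper phrases it as ``$\T$ itself would be of finite type.''
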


\begin{proof}
Suppose that $\Torsion$ and $\Free$ are both of finite type.
By the definition of a torsion pair, every object $M$ in $\T$
must be the middle term of an exact sequence
\begin{equation*}
0 \to \torsion \to M \to \free \to 0,
\end{equation*}
where $\torsion\in \Torsion$ and $\free\in \Free$.
Since $\T$ is serial, if $M$ is indecomposable then so are
$\torsion$ and $\free$. It follows that $\T$ itself is of finite
type, a contradiction.
Hence, we cannot have that both $\Torsion$ and $\Free$ are of finite type.
\end{proof}

We next prove that the torsion pairs with $\Torsion$ of infinite type
are exactly those of coray type. For an indecomposable object $X$ in $\T$
we write $\C_X$ for the coray containing $X$ and $\R_X$ for the ray containing
$X$.

\begin{lemma}\label{coray-type}
Let $(\Torsion,\Free)$ be a torsion pair in $\T$, where $\T$ has rank $n$.
Assume $\Torsion$ is of infinite type. Then the following hold:
\begin{itemize}
\item[(i)] $\Torsion$ contains an indecomposable object $X$, with $l(X) = n$.
\item[(ii)] $\Free$ is of finite type.
\item[(iii)] $\Torsion$ contains the coray $\C_X$.
\item[(iv)] $\Torsion$ cogenerates $\T$.
\end{itemize}
\end{lemma}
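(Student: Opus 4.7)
The plan is to first show that some entire coray of $\T$ is contained in $\Torsion$, which yields (i) and (iii) simultaneously; then to deduce (iv) from the cogenerating property of this coray; and finally to obtain (ii) via a dual argument leading to a contradiction.

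The key first step combines closure of $\Torsion$ under quotients (Lemma~\ref{l:tpclosure}(a)) with a pigeonhole argument. Since $\T$ contains only $n$ indecomposables of each fixed length and $\Torsion$ has infinitely many indecomposables, $\Torsion$ must contain indecomposables of arbitrarily large length. Each indecomposable lies on exactly one of the $n$ corays, so by pigeonhole some coray $\C_j$ contains objects of $\Torsion$ of arbitrarily large length, i.e.\ objects $M_{a,j}$ with $a$ arbitrarily negative. Because every indecomposable on $\C_j$ arises as a quotient of $M_{a,j}$ for sufficiently negative $a$, closure under quotients forces $\C_j\subseteq \Torsion$. The object $X=M_{j-n-1,j}$ then has length $n$ and coray $\C_X=\C_j\subseteq \Torsion$, yielding (i) and (iii) simultaneously.

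For (iv), I observe that every indecomposable $M_{c,d}$ of $\T$ embeds into an object of $\C_j$: using the $\sigma$-identification, one may choose a representative of $M_{c,d}$ with $d\leq j$, and then $M_{c,d}$ is the unique length-$(d-c-1)$ submodule of $M_{c,j}\in\C_j\subseteq \Torsion$. Splitting an arbitrary object of $\T$ into its indecomposable summands shows every such object embeds into an object of $\Torsion$. Given any nonzero map $f:A\to B$ in $\T$, composing $f$ with an embedding from a suitable indecomposable summand of $B$ into $\Torsion$ is still nonzero, verifying the cogeneration criterion.

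For (ii), I argue by contradiction: suppose $\Free$ is also of infinite type. The dual of the first step, using closure of $\Free$ under subobjects (Lemma~\ref{l:tpclosure}(b)) and pigeonhole over the $n$ rays, produces a ray $\R_i$ contained in $\Free$. Choosing $t>1$ with $t\equiv j-i\pmod{n}$, the indecomposable $M_{i,i+t}$ lies in both $\R_i$ and $\C_j$, hence in both $\Free$ and $\Torsion$. This forces $\Hom(M_{i,i+t},M_{i,i+t})=0$, contradicting the fact that this space contains the identity. The main obstacle throughout the plan is the first step: extracting a full coray inside $\Torsion$ from the infinite-type hypothesis; once this is in hand, the remaining parts follow from the uniserial structure of $\T$ and the basic geometry of ray-coray intersections in the tube.
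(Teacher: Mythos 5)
Your proof is correct, and the core mechanism for (i) and (iii) — unbounded lengths in $\Torsion$ plus closure under quotients forces a whole coray into $\Torsion$ — is the same as the paper's, though you organise it differently: your pigeonhole over the $n$ corays delivers the full coray $\C_j$ in one step, whereas the paper first produces a single object $X$ of length $n$, obtains the part of $\C_X$ below $X$ from quotient-closure, and then appeals to Lemma~\ref{l:tpclosure} for the rest of the coray. The real divergence is in (ii). The paper argues directly: letting $X'\to X$ be the irreducible monomorphism into the length-$n$ object $X$, one has $\Hom(X,Y)\neq 0$ for every indecomposable $Y$ outside the wing $\W_{X'}$, so $\Free\subseteq\W_{X'}$ is of finite type (with a separate trivial remark for $n=1$). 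You instead argue by contradiction, running the dual pigeonhole on rays to place a full ray $\R_i$ inside $\Free$ and then exhibiting an indecomposable $M_{i,i+t}$ in $\R_i\cap\C_j\subseteq\Free\cap\Torsion$, contradicting $\Hom(\Torsion,\Free)=0$. Both arguments are valid; the paper's buys the sharper conclusion that $\Free$ sits inside an explicit wing (which is implicitly reused later when torsion pairs are decomposed along wings), while yours is more symmetric and avoids the case split at $n=1$. Your verification of (iv) via embedding each indecomposable into the coray and precomposing with $f$ is a correct expansion of what the paper leaves as "a direct consequence of (iii)".
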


\begin{proof}
First note that since $\Torsion$ is of infinite type, there is no limit on the length of the indecomposable objects
in $\Torsion$. Since $\Torsion$ is closed under factor objects, it must therefore contain an indecomposable object with $l(X) = n$.
Hence (i) holds.
Since $\Torsion$ is closed under factor objects, the
indecomposable objects in the coray $\C_X$ below $X$ are contained in
$\Torsion$; more precisely $\ind(\C_X) \cap \{Y \mid l(Y) \leq n \} \subset \Torsion$.
Assume first that $n>1$.
Let $X'$ be the (uniquely defined) indecomposable object in $\T$ such that there is an irreducible
monomorphism $X' \to X$.
Since $l(X) = n$, we have that $\Hom(X,Y) = 0$ for an indecomposable object
$Y$ in $\T$ if and only if $Y$ is in the wing
$\W_{X'}$. By the definition of a torsion pair, we have that
$\Free$ is contained in $\W_{X'}$ so is of finite type.
If $n=1$, we have $\Hom(X,Y)\neq 0$ for any indecomposable
object $Y$ in $\T$, so $\Free$ is the zero subcategory and (ii) holds.
By Lemma~\ref{l:tpclosure}, (iii) holds, while (iv) is a direct consequence of (iii).
 \end{proof}

We state the dual version of Lemma \ref{coray-type}.

\begin{lemma}\label{ray-type}
Let $(\Torsion,\Free)$ be a torsion pair in $\T$, where $\T$ has rank $n$.
Assume $\Free$ is of infinite type. Then the following hold:
\begin{itemize}
\item[(i)] $\Free$ contains an indecomposable object $X$, with $l(X) = n$.
\item[(ii)] $\Torsion$ is of finite type.
\item[(iii)] $\Free$ contains the ray $\R_X$.
\item[(iv)] $\Free$ generates $\T$.
\end{itemize}
\end{lemma}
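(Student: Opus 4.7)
The plan is to prove Lemma~\ref{ray-type} as the direct dual of Lemma~\ref{coray-type}: I would swap quotients with subobjects (so that Lemma~\ref{l:tpclosure}(b) replaces part~(a)), rays with corays, and socles with tops. An alternative would be to apply the reflection map $\vee$ of Lemma~\ref{properties}, but that approach would first require verifying that $\vee$ carries torsion pairs in $\T$ to torsion pairs in $\T$, so I prefer the direct route. First, for~(i), since $\Free$ has infinite type and $\T$ is serial, the lengths of indecomposables in $\Free$ are unbounded; for any $Z\in\Free$ with $l(Z)\geq n$, its unique length-$n$ submodule lies in $\Free$ by closure under subobjects, supplying the object $X$, which I will write as $M_{i,i+n+1}$.

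For part~(ii), the key step is to establish the dual of the wing observation used in Lemma~\ref{coray-type}: letting $X''$ denote the unique indecomposable admitting an irreducible epimorphism $X\to X''$, so that $X''=M_{i+1,i+n+1}$, I would show that an indecomposable $Y\in\T$ satisfies $\Hom(Y,X)=0$ if and only if $Y\in\W_{X''}$. This reduces to a direct residue-class count in the cyclic serial tube, using the fact that $X$ has length exactly $n$ and so has every simple of $\T$ as a composition factor. Since $X\in\Free$ and $\Torsion\subseteq {}^{\hperp}\Free$, this forces $\Torsion\subseteq\W_{X''}$, which is of finite type, proving~(ii). When $n=1$ the wing collapses and $\Torsion$ is simply the zero subcategory.

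For~(iii), I would combine~(ii) with Lemma~\ref{l:tpclosure}(c), which guarantees $\Free=\Torsion^{\hperp}$. Every $T\in\Torsion$ has the form $M_{b,c}$ with $i+1\leq b\leq i+n-1$ and $c\leq i+n+1$ in canonical form; for any $t\geq 2$, a direct check shows that no integer shift by $\sigma^k$ produces a positive crossing between such an arc and $[i,i+t]$, whence $\Hom(T,M_{i,i+t})=0$. Consequently the entire ray $\R_X=\R_i$ lies in $\Free$, giving~(iii). Part~(iv) is then immediate: every indecomposable of $\T$ receives a nonzero morphism from a sufficiently long object in $\R_X$, so $\Free$ generates $\T$.

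The main obstacle is the Hom-characterization in~(ii). It is not deep, but it does rely essentially on the cyclic identification, and the same calculation also drives~(iii) via the Hom-perpendicular description of $\Free$. The remainder of the argument is a routine translation of the proof of Lemma~\ref{coray-type} using closure of $\Free$ under subobjects (and extensions) in place of closure of $\Torsion$ under quotients.
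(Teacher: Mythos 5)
Your proof is correct and is essentially the paper's intended argument: the paper gives no separate proof of Lemma~\ref{ray-type}, presenting it as the dual of Lemma~\ref{coray-type}, and your argument is precisely that dualization (unbounded lengths plus closure of $\Free$ under subobjects for (i), containment of $\Torsion$ in the wing $\W_{X''}$ for (ii), the residue-class Hom-vanishing giving $\R_i\subseteq\Torsion^{\hperp}=\Free$ for (iii), and surjections from sufficiently long ray objects for (iv)). The extra detail you supply for the Hom-characterisation in (ii) and the computation in (iii) is at the same level as, or slightly more explicit than, what the paper records in the coray case.
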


Combining Lemma \ref{onefinite}, \ref{coray-type} and  \ref{ray-type}, we obtain the following direct consequence.

\begin{corollary}\label{cortorsion}
Let $(\Torsion,\Free)$ be a torsion pair in $\T$.
\begin{itemize}
\item [(a)]  The following are equivalent
\begin{itemize}
\item[(i)] $\Torsion$ is of infinite type;
\item[(ii)] $\Torsion$ contains a coray;
\item[(iii)] $\Torsion$ cogenerates $\T$;
\item[(iv)] $\Free$ is of finite type.
\end{itemize}
\item[(b)]
$({\Torsion, \Free})$ is either of ray or of coray type (and not both).
\end{itemize}
\end{corollary}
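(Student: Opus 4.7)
The plan is to obtain Corollary~\ref{cortorsion} essentially by bookkeeping between Lemma~\ref{onefinite}, Lemma~\ref{coray-type}, and the dual Lemma~\ref{ray-type}, with one small direct argument needed.

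For part (a), the implications $(i)\Rightarrow(ii),(iii),(iv)$ are exactly Lemma~\ref{coray-type}, parts (iii), (iv), (ii) respectively. The converses I would argue as follows. $(ii)\Rightarrow(i)$ is immediate since every coray has infinitely many indecomposable objects. $(iv)\Rightarrow(i)$ is just Lemma~\ref{onefinite}, which forbids both $\Torsion$ and $\Free$ from being of finite type. For $(iii)\Rightarrow(i)$ I would argue by contradiction: if $\Torsion$ were of finite type, the lengths of its indecomposable objects would be bounded by some $N$; picking an indecomposable $A\in\T$ with $l(A)>N$, the cogenerating hypothesis produces a monomorphism $A\hookrightarrow X$ with $X\in\Torsion$, and by Lemma~\ref{l:uniserialclosure}(a) this mono must factor through a single indecomposable summand of $X$, which would then have length at least $l(A)>N$, a contradiction.

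For part (b), by Lemma~\ref{onefinite} at least one of $\Torsion,\Free$ is of infinite type. If $\Torsion$ is of infinite type then by Lemma~\ref{coray-type}(iii) it contains a coray, so the pair is of coray type; dually, if $\Free$ is of infinite type then by Lemma~\ref{ray-type}(iii) it contains a ray and the pair is of ray type. To rule out both possibilities simultaneously, suppose $\Torsion\supseteq\C_i$ and $\Free\supseteq\R_j$. Using the periodicity identification in the tube, the intersection $\C_i\cap\R_j$ contains a unique indecomposable $M$, which then lies in $\Torsion\cap\Free$; this yields the contradiction $0\neq\mathrm{id}_M\in\Hom(M,M)\subseteq\Hom(\Torsion,\Free)=0$.

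The main (and really only) obstacle is the implication $(iii)\Rightarrow(i)$, for which one has to translate the categorical cogenerating condition into a statement about embeddings in order to invoke Lemma~\ref{l:uniserialclosure}(a); everything else is a direct citation of the three preceding lemmas.
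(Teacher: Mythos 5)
Your proposal is correct and follows the paper's route exactly: the paper presents Corollary~\ref{cortorsion} as a direct consequence of combining Lemmas~\ref{onefinite}, \ref{coray-type} and \ref{ray-type}, and your bookkeeping (plus the trivial converses) is precisely that argument. The one step you explicitly defer --- passing from the paper's morphism-level definition of cogenerating to an actual embedding --- is filled by applying the definition to the socle inclusion $f\colon \soc(A)\to A$ of an indecomposable $A$ with $l(A)>N$: any $g\colon A\to X$ with $gf\neq 0$ has kernel not containing $\soc(A)$, hence zero kernel since $A$ is uniserial, so $g$ is a monomorphism and Lemma~\ref{l:uniserialclosure}(a) gives the desired long indecomposable summand of $X$ in $\Torsion$. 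One small correction in (b): $\C_i\cap\R_j$ contains infinitely many indecomposables (one of each length congruent to $i-j$ modulo $n$), not a unique one, but existence of a single common object is all your contradiction requires; alternatively, ``not both'' already follows from part (a), since coray type forces $\Free$ to be of finite type and hence to contain no ray.
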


Moreover, by Lemma~\ref{l:tpclosure} and a direct application of Lemma \ref{properties},
we obtain the following.

\begin{lemma}
The map $M \to M^{\vee}$,  maps a torsion pair $(\Torsion, \Free)$ to a
torsion pair $(\Free^{\vee}, \Torsion^{\vee})$. Moreover if  $(\Torsion, \Free)$ is of
ray-type, then  $(\Free^{\vee}, \Torsion^{\vee})$ is of coray-type and vice-versa.
\end{lemma}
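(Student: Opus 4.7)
The plan is to verify the two perpendicular identities of Lemma~\ref{l:tpclosure}(c) for the candidate pair $(\Free^\vee,\Torsion^\vee)$ in the finite length category $\T$. Since $(\Torsion,\Free)$ is already a torsion pair in $\T$ we have $\Torsion^\hperp=\Free$ and ${}^\hperp\Free=\Torsion$, so the task reduces to translating these through the involution $M_{ij}\mapsto M_{-j,-i}$ into $(\Free^\vee)^\hperp=\Torsion^\vee$ and ${}^\hperp(\Torsion^\vee)=\Free^\vee$. Note first that $\vee$ preserves the finiteness of the indices, so $\Torsion^\vee$ and $\Free^\vee$ genuinely lie inside $\T$.

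The key step is to upgrade Lemma~\ref{properties}(d) from a single object to an arbitrary subcategory $\X\subseteq\overline{\T}$: I claim that $(\X^\hperp)^\vee={}^\hperp(\X^\vee)$ and $({}^\hperp\X)^\vee=(\X^\vee)^\hperp$. Using that $\vee$ is an involution, the condition $Y\in(\X^\hperp)^\vee$ is equivalent to $Y^\vee\in X^\hperp$ for every $X\in\X$, which by the Hom-symmetry of Lemma~\ref{properties}(a) is in turn equivalent to $\Hom(Y,X^\vee)=0$ for every $X\in\X$, i.e.\ to $Y\in{}^\hperp(\X^\vee)$; the second identity is dual. Substituting $\X=\Torsion$ into the first then gives $\Free^\vee=(\Torsion^\hperp)^\vee={}^\hperp(\Torsion^\vee)$, and substituting $\X=\Free$ into the second gives $\Torsion^\vee=({}^\hperp\Free)^\vee=(\Free^\vee)^\hperp$. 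These are exactly the two conditions of Lemma~\ref{l:tpclosure}(c), so $(\Free^\vee,\Torsion^\vee)$ is a torsion pair.

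For the type assertion, the action of $\vee$ on rays and corays is immediate from its explicit formula: $\R_i=\{M_{i,i+t}:t\geq 2\}$ is mapped to $\{M_{-i-t,-i}:t\geq 2\}=\C_{-i}$, and symmetrically each coray $\C_i$ is sent to a ray $\R_{-i}$. Hence if $\Free$ contains a ray $\R_i$ then $\Free^\vee$, the torsion part of the transformed pair, contains the coray $\C_{-i}$, so $(\Free^\vee,\Torsion^\vee)$ is of coray type; the reverse implication is identical. The only part of the argument requiring any care at all is the routine subcategory-level extension of Lemma~\ref{properties}(d); beyond that, the whole proof is a short diagram-chase between the two lemmas cited in the statement, and no real obstacle is expected.
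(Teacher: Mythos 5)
Your proof is correct and follows the same route the paper takes: the paper simply asserts that the lemma follows from Lemma~\ref{l:tpclosure} and a direct application of Lemma~\ref{properties}, and your argument is exactly that deduction written out (upgrading Lemma~\ref{properties}(a) to the subcategory-level perpendicular identities, verifying the two conditions of Lemma~\ref{l:tpclosure}(c), and checking that $\vee$ swaps rays and corays).
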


For a maximal rigid object ${\rigid}$ of Pr{\"u}fer type, consider the
subcategory $\Free_{\rigid} = {^{\eperp}{\rigid}} \cap \T$. We set $\Torsion_{\rigid} =
{^{\hperp}(\Free_{\rigid})} \cap \T.$
For a maximal rigid object ${\rigid}$ of
adic type, we define
$\Torsion_{\rigid} ={\rigid} ^{\eperp} \cap \T$ and $\Free_{\rigid} =
(\Torsion_{\rigid})^{\hperp} \cap \T$.
We have the following reformulation of a result of~\cite{bk2}:

\begin{theorem}\label{propbk2}
The map ${\rigid} \mapsto (\Torsion_{\rigid}, \Free_{\rigid})$ gives
a one-to-one correspondence between equivalence classes of
maximal rigid objects in $\varinjlim \T$ and
torsion pairs in $ \T$ with the property that $\Free$ generates $\T$.
\end{theorem}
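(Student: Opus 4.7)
The plan is to deduce this from the bijection established in~\cite{bk2}. As recalled just before Proposition~\ref{propbk1}, combining~\cite[1.10]{bk2} with~\cite[Lemma 1.2]{bk2}, an object of $\varinjlim\T$ is maximal rigid if and only if it is cotilting in the sense of Colpi~\cite{colpi99}. The cotilting correspondence of~\cite{bk2} then gives a bijection between cotilting objects in $\varinjlim\T$ and torsion pairs in $\T$ belonging to a certain class, via the very rule $\rigid\mapsto(\Torsion_\rigid,\Free_\rigid)$ of the statement; note that by Proposition~\ref{propbk1}(b),(c), any maximal rigid object in $\varinjlim\T$ is necessarily of Pr\"{u}fer type, so the Pr\"{u}fer-type formulas for $\Torsion_\rigid$ and $\Free_\rigid$ apply. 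The work thus reduces to identifying the image class of this bijection with the class of torsion pairs for which $\Free$ generates $\T$; by Corollary~\ref{cortorsion} this is equivalent to $\Free$ being of infinite type.

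For the forward inclusion, let $\rigid$ be maximal rigid in $\varinjlim\T$ with Pr\"{u}fer summand $M_{i,\infty}$. I claim $M_{i,i+nN}\in\Free_\rigid$ for every integer $N\ge 1$, which already produces an infinite family in $\Free_\rigid$. For any Pr\"{u}fer summand $M_{j,\infty}$, the vanishing $\Ext^1(X,M_{j,\infty})=0$ for every $X\in\T$ was established in the proof of Theorem~\ref{exts}. For a finite summand $M_{ab}$ of $\rigid$, rigidity combined with Theorem~\ref{exts} and Proposition~\ref{crossingnumbers}(a) yields $\sigma^m(i)\notin(a,b)$ for all $m\in\mathbb{Z}$. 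A negative crossing in $\UUU(n)$ between a lift $[\sigma^m(i),\sigma^m(i)+nN]$ of the arc of $M_{i,i+nN}$ and $[a,b]$ would however force either $\sigma^m(i)$ or $\sigma^{m+N}(i)=\sigma^m(i)+nN$ to lie in $(a,b)$, contradicting the previous condition. Hence $\Ext^1(M_{i,i+nN},M_{ab})=0$, completing the claim.

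For the reverse inclusion, given a torsion pair $(\Torsion,\Free)$ with $\Free$ generating $\T$, Corollary~\ref{cortorsion} provides a ray $\R_i\subseteq\Free$. Using the cotilting theory of~\cite{bk2} in $\varinjlim\T$, one extends $(\Torsion,\Free)$ to a torsion pair in $\varinjlim\T$ (adjoining the direct limits of directed systems in $\Free$, so that in particular $M_{i,\infty}$ enters the torsion-free part) and reads off a cotilting, hence maximal rigid, object $\rigid$ as the direct sum of the indecomposable Ext-injective objects in the lifted torsion-free class; that $\rigid$ has $n$ indecomposable summands is then guaranteed by Proposition~\ref{propbk1}(a), and a direct check confirms $(\Torsion_\rigid,\Free_\rigid)=(\Torsion,\Free)$. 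The main obstacle is this reverse direction: one must carefully lift the torsion pair to $\varinjlim\T$ via the~\cite{bk2} machinery, and then verify that the extracted $\rigid$ has exactly the required $n$ summands (one Pr\"{u}fer and $n-1$ finite ones) and recovers the given pair.
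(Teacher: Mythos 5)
The paper gives no independent proof of this statement: it is presented as a reformulation of the cotilting correspondence of \cite{bk2} (compare also \cite[Thm.~1.5]{bk1}), combined with the fact, recalled just before Proposition~\ref{propbk1}, that an object of $\varinjlim \T$ is maximal rigid if and only if it is cotilting in Colpi's sense. Your proposal reduces to the same citation, so the overall route coincides with the paper's; your extra verification of the forward direction is correct (rigidity against $M_{i,\infty}$ gives $\sigma^m(i)\notin(a,b)$ for every finite summand $M_{ab}$, whence $M_{i,i+nN}\in{}^{\eperp}{\rigid}\cap\T$ for all $N$, so $\Free_{\rigid}$ is of infinite type and generates $\T$ by Lemma~\ref{ray-type}), though it is not strictly needed once one knows the cited theorem already identifies the image class as the torsion pairs whose torsion-free class generates. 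The one caveat is your reverse direction: as written it is only a sketch (lift the pair to $\varinjlim\T$, take Ext-injectives, check the pair is recovered), and it is precisely this surjectivity that the appeal to \cite{bk2} is meant to supply; since the paper itself defers entirely to that reference, this is acceptable, but your argument should be read as a translation of the cited result rather than an independent proof of it.
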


Now, using Lemma \ref{properties}, we obtain a commutative square (*):
$$
\xymatrix{
\{\text{maximal rigid objects of Pr{\"u}fer type in } \overline{\T} \}
\ar[r]  \ar[d] & \{\text{torsion pairs of ray-type in } \T \} \ar[d] \\
\{\text{maximal rigid objects of adic type in } \overline{\T} \}
\ar[r]  & \{\text{torsion pairs of coray-type in } \T  \}
}
$$
where the horizontal maps are given by ${\rigid} \mapsto (\Torsion_{\rigid}, \Free_{\rigid})$ and
the vertical maps are induced by $M \mapsto M^{\vee}$.

As a direct consequence of Lemma~\ref{properties} (as
also observed in \cite{bk1}), we have that the left vertical map in (*)
is a bijection. We have already observed that the right vertical map
is a bijection. The upper horizontal map is bijective by Theorem~\ref{propbk2},
combining with Proposition~\ref{propbk1} and Lemma~\ref{ray-type},
and it follows that the lower horizontal map is bijective.

Combining the commutative diagram of bijections (*) with Corollary \ref{cortorsion}
we obtain the following improvement of Theorem \ref{propbk2}.

\begin{theorem} \label{t:mainbijection}
The map ${\rigid} \mapsto (\Torsion_{\rigid}, \Free_{\rigid})$ gives a bijection between
\begin{itemize}
\item[$\bullet$] Equivalence classes of maximal rigid objects in $\overline{\T}$
\item[$\bullet$] Torsion pairs $(\Torsion,\Free)$ in $\T$, and
\end{itemize}
\end{theorem}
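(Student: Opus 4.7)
The plan is to assemble the main bijection from the two partitions already established: the partition of maximal rigid objects in $\overline{\T}$ into Pr\"ufer and adic type (Proposition~\ref{propbk1}(b)) and the partition of torsion pairs in $\T$ into ray and coray type (Corollary~\ref{cortorsion}(b)). Since both partitions are disjoint unions, it suffices to exhibit a bijection on each of the two parts and verify that the assignment ${\rigid}\mapsto(\Torsion_{\rigid},\Free_{\rigid})$ respects the two partitions.

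First I would observe that the top horizontal map in the square (*) is a bijection: by Proposition~\ref{propbk1}(c), every maximal rigid object ${\rigid}$ of Pr\"ufer type lies in $\varinjlim\T$, so Theorem~\ref{propbk2} applies and yields a bijection between equivalence classes of such ${\rigid}$ and torsion pairs $(\Torsion,\Free)$ in $\T$ for which $\Free$ generates $\T$. By Lemma~\ref{ray-type}(iii)–(iv), the latter condition is equivalent to $(\Torsion,\Free)$ being of ray type, so the top horizontal map is exactly the bijection between maximal rigid objects of Pr\"ufer type and torsion pairs of ray type.

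Next I would observe that the two vertical maps in (*) are bijections: the left vertical map is a bijection because $M\mapsto M^\vee$ is an involution on indecomposables of $\overline{\T}$ which, by Lemma~\ref{properties}(b), preserves rigidity and hence maximal rigidity, and clearly exchanges Pr\"ufer and adic summands; the right vertical map is a bijection by the lemma immediately preceding the statement of the theorem (using Lemma~\ref{l:tpclosure} and Lemma~\ref{properties}), which asserts that $M\mapsto M^\vee$ sends torsion pairs to torsion pairs and interchanges ray type and coray type. Commutativity of (*) follows directly from the definitions of $(\Torsion_{\rigid},\Free_{\rigid})$ on the two sides and from Lemma~\ref{properties}(d); combining commutativity with bijectivity of three of the four maps forces the lower horizontal map to be a bijection as well.

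The main theorem then follows immediately: by Proposition~\ref{propbk1}(b) the domain splits as the disjoint union of maximal rigid objects of Pr\"ufer and of adic type, by Corollary~\ref{cortorsion}(b) the codomain splits as the disjoint union of torsion pairs of ray and of coray type, and the map ${\rigid}\mapsto(\Torsion_{\rigid},\Free_{\rigid})$ restricts to a bijection on each part by the two horizontal arrows of (*). There is no real obstacle left at this point; the substantive content has been absorbed into Theorem~\ref{propbk2}, Proposition~\ref{propbk1} and Corollary~\ref{cortorsion}, and the only thing to check carefully is that the definitions of $(\Torsion_{\rigid},\Free_{\rigid})$ in the Pr\"ufer and adic cases are genuinely interchanged by $M\mapsto M^\vee$, which is a direct consequence of Lemma~\ref{properties}(d) and Lemma~\ref{l:twistgen}.
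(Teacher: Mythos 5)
Your proposal is correct and follows essentially the same route as the paper: it establishes the commutative square (*), shows the upper horizontal map is bijective via Theorem~\ref{propbk2} together with Proposition~\ref{propbk1} and Lemma~\ref{ray-type}, uses Lemma~\ref{properties} for the two vertical bijections, deduces bijectivity of the lower horizontal map, and glues the two halves using the disjoint partitions from Proposition~\ref{propbk1}(b) and Corollary~\ref{cortorsion}(b). The only difference is that you spell out a few routine verifications (commutativity of (*), the equivalence of ``$\Free$ generates $\T$'' with ray type) slightly more explicitly than the paper does.
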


\begin{corollary}
The number of torsion pairs in $\T$ is $2\binom{2n-1}{n-1}$.
\end{corollary}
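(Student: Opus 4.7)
The plan is to translate the count of torsion pairs into a count of maximal rigid objects in $\overline{\T}$ via Theorem~\ref{t:mainbijection}, reduce by the reflection symmetry to Pr\"{u}fer-type maximal rigids, and then enumerate these combinatorially.

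First, Theorem~\ref{t:mainbijection} identifies torsion pairs in $\T$ with equivalence classes of maximal rigid objects in $\overline{\T}$. By Proposition~\ref{propbk1}(b) each such object is of Pr\"{u}fer type or of adic type but not both, and the left vertical map of the commutative square (*)---namely the reflection $M\mapsto M^\vee$---bijects these two classes. Hence the total count equals $2N_n$, where $N_n$ denotes the number of equivalence classes of Pr\"{u}fer-type maximal rigid objects in $\overline{\T}$.

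Next I would parametrize a Pr\"{u}fer-type $\rigid$ by its nonempty set $I=\{i_1<i_2<\cdots<i_k\}\subseteq\{0,1,\ldots,n-1\}$ of Pr\"{u}fer-summand indices, and write $(d_1,\ldots,d_k)$ for the cyclic gap-composition of $n$ induced by $I$ (so $d_l=i_{l+1}-i_l$ cyclically, with $i_{k+1}=i_1+n$). By Proposition~\ref{propbk1}(a), $\rigid$ has $n$ indecomposable summands, so the remaining $n-k$ lie in $\T$. From the Ext computations in Theorem~\ref{exts} and its proof, $\Ext^1$ vanishes automatically between any two Pr\"{u}fer summands and from any summand in $\T$ to any Pr\"{u}fer summand; the nontrivial constraints are therefore (i) $\Ext^1(M_{i_l,\infty},M_{ab})=0$ for every $l$, forcing each finite summand $M_{ab}$ to lie in one of the wings $\W_{i_l,i_{l+1}}$, and (ii) Ext-vanishing within each wing. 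By Lemma~\ref{wingsareA} the wing $\W_{i_l,i_{l+1}}$ is equivalent to $\module K A_{d_l-1}$, and the equivalence is exact since $d_l\le n$; so the summands lying in it form a tilting module of type $A_{d_l-1}$, of which there are $C_{d_l-1}$ (the Catalan number). This yields
$$N_n \;=\; \sum_{\emptyset\neq I\subseteq\mathbb{Z}/n}\;\prod_{l=1}^{|I|}C_{d_l-1}.$$

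Finally I would evaluate this sum via generating functions. Each ordered composition $(d_1,\ldots,d_k)$ of $n$ with $d_l\ge1$ is realized by exactly $d_k$ subsets $I$ (the starting index $i_1$ runs over $\{0,1,\ldots,d_k-1\}$), and cyclic relabeling gives $\sum d_k\prod C_{d_l-1}=(n/k)\sum\prod C_{d_l-1}$, so
$$N_n=\sum_{k\ge1}\frac{n}{k}\,[z^n](zC(z))^k,\qquad C(z)=\sum_{m\ge0}C_mz^m=\frac{1-\sqrt{1-4z}}{2z}.$$
Summing $\sum_{k\ge1}y^k/k=-\ln(1-y)$ with $y=zC(z)=(1-\sqrt{1-4z})/2$ yields
$$\sum_{n\ge1}N_nz^n=z\frac{d}{dz}\bigl(-\ln(1-zC(z))\bigr)=\frac{1}{2\sqrt{1-4z}}-\frac12=\sum_{n\ge1}\binom{2n-1}{n-1}z^n,$$
so $N_n=\binom{2n-1}{n-1}$ and the total number of torsion pairs is $2\binom{2n-1}{n-1}$. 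The main obstacle is establishing the combinatorial identity in this last step; a more conceptual bijective proof (likely through the geometric model of Section~5, cutting the annulus along a single Pr\"{u}fer arc to produce a triangulated polygon) may well exist, but the generating-function calculation above is self-contained.
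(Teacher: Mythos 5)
Your argument is correct, and it diverges from the paper at the key counting step. The paper's proof is essentially a citation: it reduces, exactly as you do, to counting maximal rigid objects of one of the two types (equivalently, cotilting objects in $\varinjlim \T$), and then quotes \cite[2.4, 5.2 and B.1]{bk2} for the value $\binom{2n-1}{n-1}$, doubling via the reflection $M\mapsto M^{\vee}$ and Theorem~\ref{t:mainbijection}. You instead derive $N_n=\binom{2n-1}{n-1}$ from scratch: the parametrization of a Pr\"{u}fer-type maximal rigid object by its set $I$ of Pr\"{u}fer indices together with a tilting module in each wing $\W_{i_l,i_{l+1}}$ is justified by Lemma~\ref{perps}, Lemma~\ref{l:wings}, Theorem~\ref{exts} and Proposition~\ref{propbk1}(a) (the count $n$ of summands forces exactly $d_l-1$ summands in the $l$-th wing, hence a tilting module by the result of \cite{bo} quoted in Section~\ref{s:Dynkin}), and your transfer from cyclic subsets to linear compositions (the factor $d_k$, symmetrized to $n/k$) and the subsequent generating-function evaluation $z\frac{d}{dz}\bigl(-\ln(1-zC(z))\bigr)=\frac{1}{2\sqrt{1-4z}}-\frac{1}{2}$ both check out. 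What your route buys is a self-contained proof within the paper's own framework (and a refinement: the count of Pr\"{u}fer-type maximal rigids with prescribed Pr\"{u}fer summands is a product of Catalan numbers); what the paper's route buys is brevity, at the cost of depending on the external enumeration in \cite{bk2}.
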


\begin{proof}
By~\cite[2.4, 5.2 and B.1]{bk2},
the number of cotilting objects in $\varinjlim \T$
is $\binom{2n-1}{n-1}$. The result then follows from Proposition~\ref{propbk1},
the above diagram of maps, and Theorem~\ref{t:mainbijection}.
\end{proof}

Note that the above results hold in the case $n=1$: in this case there are two maximal
rigid objects: the unique Pr\"{u}fer and adic modules, and two corresponding torsion pairs,
$(0,\T)$ and $(\T,0)$ respectively.

\subsection{Alternative and explicit descriptions of $\Torsion_{\rigid}$ and $\Free_{\rigid}$}

We give alternative and more explicit descriptions for the subcategories
$\Torsion_{\rigid}$ and $\Free_{\rigid}$ corresponding to a maximal rigid object ${\rigid}$ in
$\overline{\T}$.

The following observation is useful.

\begin{lemma}\label{perps}
Let $\T$ be a tube of rank $n$. Then we have:
\begin{itemize}
\item[(a)]$^{\eperp} M_{i,\infty} \cap \T = \T$;
\item[(b)]$M_{i,\infty}^{\eperp} \cap \T = \W_{i,i+n}$
\end{itemize}
\end{lemma}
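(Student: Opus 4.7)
The plan is to reduce both parts to the Ext-dimension formula of Theorem~\ref{exts} and then unpack the resulting combinatorial conditions, using the convention $M_{\sigma^k(a),\sigma^k(b)}=M_{ab}$ to normalize representatives.

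For part (a), I would apply Theorem~\ref{exts} together with Proposition~\ref{crossingnumbers}(b), which says $I^-(\pi_n([a,b]),\pi_n([i,\infty]))=0$ for all arcs $[a,b]$ with $a,b$ finite. This gives $\Ext^1(M_{ab},M_{i,\infty})=0$ for every indecomposable $M_{ab}\in\T$, and hence $^{\eperp}M_{i,\infty}\cap\T=\T$. (Equivalently, as already noted inside the proof of Theorem~\ref{exts}, by Lemma~\ref{ARformula} we have $\Ext^1(M_{ab},M_{i,\infty})\cong D\Hom(M_{i,\infty},\tau M_{ab})$, and this vanishes by Lemma~\ref{homvanishing}(a) since $\tau M_{ab}$ is finitely generated.)

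For part (b), Theorem~\ref{exts} together with Proposition~\ref{crossingnumbers}(a) gives
$$
\dim_K\Ext^1(M_{i,\infty},M_{ab})=|\{m\in\mathbb{Z}\,:\,a<\sigma^m(i)<b\}|,
$$
so the task becomes characterizing those $M_{ab}\in\T$ for which no integer congruent to $i$ modulo $n$ lies strictly between $a$ and $b$. I would fix the representative $i\in\{0,\dots,n-1\}$ and, using the identification $M_{ab}=M_{\sigma^k(a),\sigma^k(b)}$, choose $k$ so that $i\le a+kn<i+n$; replacing $(a,b)$ by $(a+kn,b+kn)$, the condition becomes: no element of $i+n\mathbb{Z}$ lies in $(a,b)$ with $a\ge i$. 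Since the relevant translates are $i+n,i+2n,\dots$, the only constraint is $b\le i+n$. Combined with the standing inequalities $a\ge i$ and $b\ge a+2$ (the latter because $M_{ab}$ is a genuine indecomposable of $\T$), this matches exactly the defining inequalities of the wing $\W_{i,i+n}$. Conversely, every $M_{j,j+u}\in\W_{i,i+n}$ satisfies $i\le j$ and $j+u\le i+n$, so the count above is zero and $\Ext^1(M_{i,\infty},M_{j,j+u})=0$.

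The main obstacle is purely bookkeeping: one has to be careful to pick the $\sigma$-representative of $M_{ab}$ that brings the endpoint $a$ into the ``standard'' position $a\ge i$, so that the wing inequalities appear in their stated form; once this normalization is fixed, the rest is a direct translation between the crossing count and the wing's defining inequalities.
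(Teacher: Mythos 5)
Your proof is correct and follows the route the paper intends: the lemma is stated there without proof, but both parts are immediate consequences of the $\Ext$-computations carried out in the proof of Theorem~\ref{exts} (the vanishing of $\Ext^1(M_{ab},M_{i,\infty})$ via Lemmas~\ref{ARformula} and~\ref{homvanishing}(a), and the counting formula $\dim\Ext^1(M_{i,\infty},M_{ab})=|\{m:a<\sigma^m(i)<b\}|$), which is exactly what you invoke. Your normalization $i\le a<i+n$ and the resulting match between the vanishing condition $b\le i+n$ and the defining inequalities of the wing $\W_{i,i+n}$ are both accurate.
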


We now give an explicit description of the torsion pair $(\Torsion_{\rigid},\Free_{\rigid})$
corresponding to a  maximal rigid module ${\rigid}$. We first of
all give a combinatorial lemma concerning wings, which is easy to
check.

\begin{lemma} \label{l:wings}
Let $0 \leq i_0 < i_1 < \cdots < i_{k-1} \leq n-1$ be integers.
Then
\begin{itemize}
\item[(i)] We have:
$$\bigcap_{r=0}^{k-1} \W_{i_r,i_r + n} = \coprod_{r=0}^{k-1} \W_{i_r,i_{r+1} }$$
(where the subscripts are interpreted modulo $k$ and, for $r=k-1$, we
interpret $i_{r+1}=i_0$ as $i_0+n$).
\item[(ii)] The wing $\W_{i_r,i_{r+1} }$ is zero if and only if
$i_{r+1} - i_{r}= 1$.
\item[(iii)] The wings $\W_{i_r,i_{r+1}}$ do not overlap,
i.e.\ each indecomposable object in $\T$ belongs to at most one $\W_{i_r,i_{r+1}}$.
\item[(iv)] If $X$ and $Y$ lie in different wings among the
  $\W_{i_r,i_{r+1} } $, then $\Ext^1(X,Y) = 0 = \Ext^1(Y,X)$.
\end{itemize}
\end{lemma}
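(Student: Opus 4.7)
The plan is to dispose of each part largely separately, with the real work concentrated in (i) and (iv). Part (ii) is immediate from the paper's convention that $\W_{i,i+t}=0$ for $t\le 1$, together with $i_{r+1}-i_r\ge 1$ by hypothesis. For part (iii), recall that $M_{a,b}\in\W_{i,i+t}$ means some shift $(a+mn,b+mn)$ has first coordinate in $[i,i+t-2]$; for $\W_{i_r,i_{r+1}}$ that first coordinate lies in $[i_r,i_{r+1})\subseteq[i_0,i_0+n)$. Since $[i_0,i_0+n)$ has length $n$, the shift representative in it is unique, and the disjoint decomposition $[i_0,i_0+n)=\bigsqcup_{r=0}^{k-1}[i_r,i_{r+1})$ forces $r$ to be determined by $M_{a,b}$.

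For part (i) I would verify both inclusions by elementary bookkeeping with representatives. For $\supseteq$, take $M_{a,b}\in\W_{i_r,i_{r+1}}$ with representative $(a',b')$ satisfying $a'\in[i_r,i_{r+1}-2]$ and $b'\le i_{r+1}$. For $s<r$ the same $(a',b')$ exhibits $M_{a,b}\in\W_{i_s,i_s+n}$, using $i_s<i_r\le a'$, $a'\le i_{r+1}-2\le i_0+n-2\le i_s+n-2$, and $b'\le i_{r+1}\le i_s+n$; for $s>r$ a symmetric check with $(a'+n,b'+n)$ and $i_{r+1}\le i_s$ does the job. For $\subseteq$, take $M_{a,b}$ in the intersection, pick its unique representative $(a',b')$ with $a'\in[i_0,i_0+n)$, and let $r$ be the unique index with $a'\in[i_r,i_{r+1})$. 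Setting $i_k:=i_0+n$, membership of $M_{a,b}$ in the cyclically next wing $\W_{i_{r+1},i_{r+1}+n}$ (identified with $\W_{i_0,i_0+n}$ if $r=k-1$) combined with uniqueness of shift representatives forces the relevant representative in $[i_{r+1},i_{r+1}+n)$ to be $(a'+n,b'+n)$, and then the wing conditions deliver $a'\le i_{r+1}-2$ (ruling out the edge case $a'=i_{r+1}-1$) and $b'\le i_{r+1}$. Hence $M_{a,b}\in\W_{i_r,i_{r+1}}$.

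For part (iv) I would invoke the crossing-number formula of Theorem~\ref{exts}: $\Ext^1(M_{ab},M_{cd})\neq 0$ requires shifts producing a negative crossing $c+m'n<a+mn<d+m'n<b+mn$. With $X=M_{ab}\in\W_{i_r,i_{r+1}}$ and $Y=M_{cd}\in\W_{i_s,i_{s+1}}$, WLOG $r<s$, fix the representative $(a,b)$ in $[i_r,i_{r+1}]$ and scan the shifts $(c+mn,d+mn)$ of $(c,d)$: $m=0$ fails because $c\ge i_s\ge i_{r+1}>a$; $m=-1$ fails because $d-n\le i_{s+1}-n\le i_0\le a$; and all $|m|\ge 2$ push the shifted arc entirely off $a$. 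The symmetric argument handles $\Ext^1(Y,X)$. The main obstacle in the whole proof is the edge case $a'=i_{r+1}-1$ in the $\subseteq$ direction of (i): its exclusion, via uniqueness of the shift representative of $M_{a,b}$ in the fundamental domain of the cyclically next wing, is where the combinatorial structure $i_0<i_1<\cdots<i_{k-1}<i_0+n$ is really doing the work.
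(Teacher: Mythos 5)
Your proposal is correct. Note that the paper does not actually give a proof of this lemma: it is introduced as ``a combinatorial lemma concerning wings, which is easy to check,'' so there is nothing to compare against except the implied elementary verification, and yours is exactly that. Your bookkeeping with shift representatives in the universal cover is the right framework: the key observations --- that membership of $M_{a,b}$ in $\W_{i,i+t}$ is witnessed by the unique representative of $a$ modulo $n$ in a half-open interval of length $n$, that the intervals $[i_r,i_{r+1})$ partition $[i_0,i_0+n)$, and that in the $\subseteq$ direction of (i) the cyclically next wing $\W_{i_{r+1},i_{r+1}+n}$ is what forces $a'\le i_{r+1}-2$ and $b'\le i_{r+1}$ --- are all correct, and you have rightly identified the exclusion of $a'=i_{r+1}-1$ as the only point where something could go wrong. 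Two minor remarks. First, in (iv) your explicit case list for $\Ext^1(X,Y)$ omits $m=+1$; it is subsumed by the same inequality as $m=0$ (namely $c+mn\ge i_s\ge i_{r+1}>a$ for all $m\ge 0$), but strictly speaking it is not covered by ``$|m|\ge 2$,'' so you should fold it into the first case. Second, the ``symmetric argument'' for $\Ext^1(Y,X)$ is analogous rather than literally symmetric (the decisive inequalities are $b\le i_{r+1}\le i_s\le c$ for $m=0$ and $b+n\ge i_r+n+2>i_0+n\ge d$ for $m=1$), so it is worth writing out; once done, everything checks. An alternative, slightly more structural route to (iv) is via $\Ext^1(X,Y)\cong D\Hom(Y,\tau X)$ and seriality, but since Theorem~\ref{exts} is already available, your crossing-number argument is the most economical.
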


For an example illustrating Lemma~\ref{l:wings}(i),
with $n=10$, $k=4$, $i_0=0$, $i_1=4$, $i_2=7$ and $i_3=8$, see
Figure~\ref{fig:wingsexample}.

\begin{figure}[hb]
$$\xymatrix@!0@=0.5cm{
&&&&&&&&&& &&&&&&&&&& \\
&& \scriptstyle M_{7,17} && \scriptstyle M_{8,18} &&&& \scriptstyle M_{0,10} && &&&&&& \scriptstyle M_{4,14} &&&& \\
\circ && *=0{\circ} \ar@{-}[rrrrrrrrdddddddd]+0 && *=0{\circ} \ar@{-}[rrrrrrrrdddddddd]+0 && \circ && *=0{\circ} \ar@{-}[rrrrrrrrdddddddd]+0 && \circ && \circ && \circ && *=0{\circ} \ar@{-}[rrrrdddd]+0 && \circ && \circ \\
& \circ && \circ && \circ && \circ && \circ && \circ && \circ && \circ && \circ && \circ \\
*=0{\circ} \ar@{-}[rruu]+0 && \circ && \circ && \circ && \circ && \circ && \circ && \circ && \circ && \circ && \circ \\
& \circ && \circ && \circ && \circ && \circ && \circ && \circ && \circ && \circ && \circ \\
*=0{\circ} \ar@{-}[rrrruuuu]+0 \ar@{-}[rrrrdddd]+0 && \circ && \circ && \circ && \circ && \circ && \circ && \circ && \circ && \circ && \circ \\
& \circ && \circ && \circ && \circ && \circ && \circ && \circ && \circ && \circ && \circ && \\
\circ && \bullet & \scriptstyle M_{0,4} & \circ && \circ && \circ && \circ && \circ && \circ && \circ && \circ && \circ \\
& \bullet && \bullet && \circ && \circ && \bullet & \scriptstyle M_{4,7} & \circ && \circ && \circ && \circ && \circ && \\
*=0{\bullet} \ar@{.}[uuuuuuuuuu]+0 \ar@{-}[rrrrrrrruuuuuuuu]+0 && \bullet && \bullet && \circ && *=0{\bullet} \ar@{-}[rrrrrrrruuuuuuuu]+0 && \bullet && \circ && *=0{\circ} \ar@{-}[rrrrrruuuuuu]+0 &&
*=0{\bullet} \ar@{-}[rrrruuuu]+0 && \circ && *=0{\bullet} \ar@{.}[uuuuuuuuuu]+0 \\
\scriptstyle M_{0,2} && \scriptstyle M_{1,3} && \scriptstyle M_{2,4} && \scriptstyle M_{3,5} && \scriptstyle M_{4,6} && \scriptstyle M_{5,7} && \scriptstyle M_{6,8} && \scriptstyle M_{7,9} && \scriptstyle M_{8,10} && \scriptstyle M_{9,11} && \scriptstyle M_{0,2}
}$$
\caption{The indecomposable objects in the intersection of the four wings
$\W_{0,10},\W_{4,14},\W_{7,17}$ and $\W_{8,18}$ in a tube of rank
$10$ corresponding to a maximal rigid object of Pr\"{u}fer
type. The objects lying in the intersection are indicated by filled-in circles.
The intersection coincides with $\W_{0,4}\amalg \W_{4,7}\amalg \W_{8,10}$
(note that the wing $\W_{i_3,i_4}=\W_{7,8}$ is zero).}
\label{fig:wingsexample}
\end{figure}
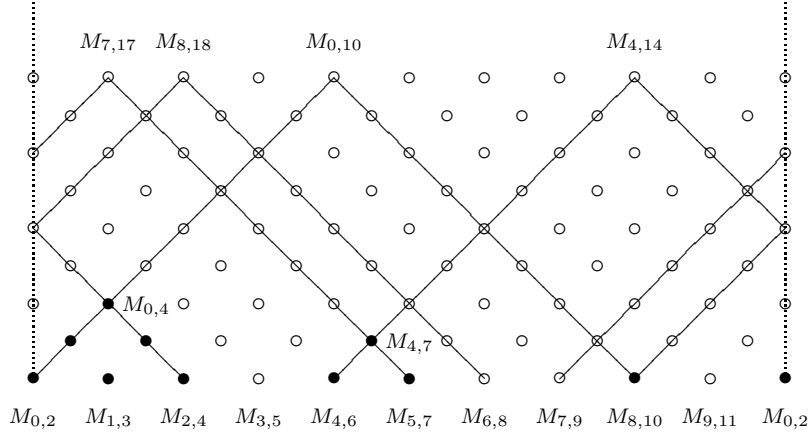

In the following proposition and the sequel, we adopt the same convention
for the wings as in Lemma~\ref{l:wings}(i).

\begin{proposition} \label{p:ftgt}
Let ${\rigid}$ be maximal rigid in $\overline{\T}$.
\begin{itemize}
\item[(a)] If ${\rigid}$ is of Pr{\"u}fer type, then
$$
(\Torsion_{\rigid}, \Free_{\rigid})  = ( {^{\hperp}{\rigid}} \cap \T, {^{\eperp}{\rigid}} \cap \T)
  = (\tau^{-1} (\Gen {\rigid} \cap \T), \Cogen {\rigid} \cap \T).
$$
\item[(b)]
Assume ${\rigid}$ is of Pr{\"u}fer type with Pr{\"u}fer summands
$M_{i_r,\infty}$
for $r=0, \dots, k-1$ where $0 \leq i_0 < i_1 < \cdots < i_{k-1}
\leq n-1$. Then $$(\Torsion_{\rigid}, \Free_{\rigid}) = (\coprod_{r=0}^{k-1}
  \Torsion_r, (\coprod_{r=0}^{k-1}\Free_r) \amalg \Free_{\infty})),$$ where
$(\Torsion_r, \Free_r)$ is a torsion pair in $\W_{i_r, i_{r+1}+1}$ with
$\Free_r$ containing all of the projective objects in $\W_{i_r,i_{r+1}+1}$
and $\Free_{\infty} = \coprod_{r=0}^{k-1} \R_{i_r}$.

\item[(c)]  If ${\rigid}$ is of adic type, then
$$(\Torsion_{\rigid}, \Free_{\rigid})  = ({\rigid}
  ^{\eperp} \cap \T, {\rigid} ^{\hperp} \cap \T) =
  (\Gen {\rigid} \cap \T, \tau(\Cogen {\rigid} \cap \T)).$$
\item[(d)] Assume ${\rigid}$ is of adic type with adic summands $M_{i_r,\infty}$
  for $r=0, \dots, k-1$,  where $0 \leq i_0 < i_1 < \cdots < i_{k-1}
\leq n-1$. Then $$(\Torsion_{\rigid}, \Free_{\rigid}) = ((\coprod_{r=0}^{k-1}
  \Torsion_r) \amalg \Torsion_{\infty}, \coprod_{r=0}^{k-1}\Free_r),$$ where
$(\Torsion_r, \Free_r)$ is a torsion pair in $\W_{i_r, i_{r+1}+1}$ with
$\Torsion_r$ containing all of the injective objects in $\W_{i_r,i_{r+1}+1}$ and
$\Torsion_{\infty} = \coprod_{r=0}^{k-1} \C_{i_{r+1}+1}$.
\end{itemize}
\end{proposition}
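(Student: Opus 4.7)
The plan is to establish part (b) by a direct computation and then deduce (a) from the explicit description; parts (c) and (d) follow from the reflection $M\mapsto M^{\vee}$ via Lemma~\ref{properties} and Lemma~\ref{l:twistgen}. I first decompose ${\rigid}$ into its Pr\"ufer summands $M_{i_r,\infty}$ and its finite summands. By rigidity of ${\rigid}$ and Lemma~\ref{perps}(b), each finite summand lies in $\bigcap_r \W_{i_r,i_r+n}=\coprod_r \W_{i_r,i_{r+1}}$ (the equality being Lemma~\ref{l:wings}(i)); let ${\rigid}_r$ denote the component in $\W_{i_r,i_{r+1}}$. Proposition~\ref{propbk1}(a), combined with the count $\sum_r(i_{r+1}-i_r-1)=n-k$, forces each ${\rigid}_r$ to be maximal rigid, and hence tilting, in the type-A wing $\W_{i_r,i_{r+1}}$ (Lemma~\ref{wingsareA}). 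Adjoining the projective-injective module gives $\widetilde{\rigid}_r={\rigid}_r\amalg M_{i_r,i_{r+1}+1}$, which is tilting in the larger wing $\W_{i_r,i_{r+1}+1}$; Corollary~\ref{cor:Gen-Cogen} (second map) applied inside this wing then yields a torsion pair $(\Torsion_r,\Free_r)=(\Gen\tau^{-1}\widetilde{\rigid}_r,\Cogen\widetilde{\rigid}_r)$ whose torsion-free part contains all projective objects of the wing.

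The core step is to verify the global decomposition $\Free_{\rigid}=(\coprod_r\Free_r)\amalg\Free_{\infty}$ with $\Free_{\infty}=\coprod_r\R_{i_r}$. Since $\Free_{\rigid}={}^{\eperp}{\rigid}\cap\T$, Lemma~\ref{perps}(a) removes any constraint from the Pr\"ufer summands, leaving only $\Ext^1(Y,{\rigid}_r)=0$ to check. This condition is localised to $\W_{i_r,i_{r+1}+1}$ via Lemma~\ref{l:wings}(iv) and an analysis of the negative crossings supplied by Theorem~\ref{exts}, identifying $\Free_{\rigid}\cap\W_{i_r,i_{r+1}+1}$ with $\Free_r$. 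For the rays, Theorem~\ref{exts} together with Lemma~\ref{ARformula} gives $\Ext^1(M_{i_r,i_r+t},R')=0$ for every indecomposable summand $R'$ of ${\rigid}$, placing $\R_{i_r}$ in $\Free_{\rigid}$; conversely, an indecomposable outside $\bigcup_r(\Free_r\cup\R_{i_r})$ is shown to admit a nonzero $\Ext$ to some summand of ${\rigid}$. A parallel computation using Lemma~\ref{l:tpclosure} gives $\Torsion_{\rigid}={}^{\hperp}\Free_{\rigid}\cap\T=\coprod_r\Torsion_r$, completing (b).

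For (a), uniseriality (Lemma~\ref{l:uniserialclosure}) identifies the indecomposable submodules of $M_{i_r,\infty}$ with $\R_{i_r}$ and those of the summands of ${\rigid}_r$ with the wing-local $\Cogen\widetilde{\rigid}_r=\Free_r$, yielding $\Free_{\rigid}=\Cogen{\rigid}\cap\T$. The equality $\Torsion_{\rigid}={}^{\hperp}{\rigid}\cap\T$ follows from $\add{\rigid}\cap\T\subseteq\Free_{\rigid}$ together with Lemma~\ref{homvanishing}(a), which makes $\Hom$ into the Pr\"ufer summands automatically zero. The remaining identification $\Torsion_{\rigid}=\tau^{-1}(\Gen{\rigid}\cap\T)$ follows by applying the AR formula (Lemma~\ref{ARformula}) wing-by-wing, noting that $\Gen{\rigid}\cap\T=\coprod_r\Gen{\rigid}_r$ since Pr\"ufer modules have no nonzero finite-dimensional quotients. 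Parts (c) and (d) now follow by applying the reflection: Proposition~\ref{propbk1}(b) exchanges Pr\"ufer and adic types, and Lemmas~\ref{properties} and~\ref{l:twistgen} translate $\Ext$, $\Hom$, $\Gen$, and $\Cogen$ into the formulas needed.

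I expect the main obstacle to be the global $\Ext$-computation just described: the rays $\R_{i_r}$ extend beyond the wings, and their interaction with finite summands of ${\rigid}$ living in other wings must be traced through the cyclic negative-crossing formula of Theorem~\ref{exts} in several cases according to the relative cyclic position of $i_r$ and $i_s$. The remaining arguments reduce to type-A statements already handled in Section~\ref{s:Dynkin} or to routine manipulations with Lemma~\ref{limits} and the reflection formulas.
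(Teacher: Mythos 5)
Your overall strategy coincides with the paper's: decompose ${\rigid}$ into its Pr\"ufer summands and wing components ${\rigid}_r$, show each ${\rigid}_r$ is maximal rigid (your counting argument via Proposition~\ref{propbk1}(a) is a harmless variant of the paper's appeal to Lemma~\ref{l:wings}(iv)), pass to the enlarged wings $\W_{i_r,i_{r+1}+1}$ and the (co)tilting modules $\widetilde{\rigid}_r$, and assemble $\Free_{\rigid}$ and $\Torsion_{\rigid}$ wing by wing together with the rays. That part of the plan is sound and matches the paper's proof of (a) and (b), with (c) and (d) obtained by reflection in both treatments.

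There is, however, a concrete error in your argument for the identity $\Torsion_{\rigid}={}^{\hperp}{\rigid}\cap\T$ in part (a). You invoke Lemma~\ref{homvanishing}(a) to claim that ``$\Hom$ into the Pr\"ufer summands is automatically zero''. That lemma says $\Hom(X,Y)=0$ for $X$ Pr\"ufer and $Y$ finitely generated, i.e.\ it controls maps \emph{out of} Pr\"ufer modules; maps from a finite-length module \emph{into} a Pr\"ufer module are typically nonzero (every finite $M_{i,j}$ embeds into $M_{i,\infty}$, for instance). The conditions $\Hom(Y,M_{i_r,\infty})=0$ are in fact the essential ones: they are exactly what confines ${}^{\hperp}{\rigid}\cap\T$ to the union of shifted wings $\coprod_r\tau^{-1}\W_{i_r,i_{r+1}}$. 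If they were vacuous, ${}^{\hperp}{\rigid}\cap\T$ would equal ${}^{\hperp}{\rigid}_{\T}\cap\T$ (where ${\rigid}_{\T}$ is the finite part of ${\rigid}$), which is strictly larger than $\Torsion_{\rigid}$ in general; for example, for $n=1$ and ${\rigid}=M_{0,\infty}$ one has ${\rigid}_{\T}=0$, so your reasoning would give ${}^{\hperp}{\rigid}\cap\T=\T$, whereas $\Torsion_{\rigid}=0={}^{\hperp}M_{0,\infty}\cap\T$. The correct route (as in the paper) is to compute ${}^{\hperp}(\amalg_r M_{i_r,\infty})\cap\T$ explicitly; alternatively, for the inclusion $\Torsion_{\rigid}\subseteq{}^{\hperp}{\rigid}\cap\T$ one can use that the rays $\R_{i_r}$ lie in $\Free_{\rigid}$ together with Lemma~\ref{limits}(b), so that $\Hom(Y,M_{i_r,\infty})=\varinjlim\Hom(Y,M_{i_r,j})=0$ for $Y\in\Torsion_{\rigid}$. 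The remainder of your outline (the ray computation, the localisation of the $\Ext$-conditions to the wings, and the deduction of (c), (d) by reflection) is consistent with the paper's argument.
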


\begin{proof}
We give the details for (a) and (b), while statements (c) and (d) can
be proved similarly (or using Lemmas~\ref{properties} and ~\ref{l:twistgen}).
Let ${\rigid}$ be maximal rigid of Pr{\"u}fer type in $\overline{\T}$.
Our aim is to compute $\Free_{\rigid}={}^{\eperp} {\rigid}\cap \T$ and then
$\Torsion_{\rigid}={}^{\hperp}\Free_{\rigid}\cap \T$. We use the Pr\"{u}fer direct summands
of ${\rigid}$ in order to compute $\Free_{\rigid}$ more precisely in terms of a set
of wings in $\T$. We then use this to compute $\Torsion_{\rigid}$ using the theory of
torsion pairs in type A (see Section~\ref{s:Dynkin}).

Let ${\rigid}_{\T}$ be the direct sum of all indecomposable
direct summands of ${\rigid}$ which are finitely generated.
Let $M_{i_0,\infty},\ldots ,M_{i_{k-1},\infty}$
with $0\leq i_0<i_1<\cdots <i_{k-1}\leq n-1$ be the
indecomposable Pr{\"u}fer summands of ${\rigid}$.

By Lemma \ref{perps}, we then have that the finite part ${\rigid}_{\T}$
lies in $\cap_{r=0}^{k-1} \W_{i_r,i_r + n}$, which coincides with
$\coprod_{r=0}^{k-1} \W_{i_r,i_{r+1}}$ by Lemma~\ref{l:wings}(i).
We draw attention to the fact that the wings
in the statement of the proposition are slightly larger than these ---
this will become clearer later in the proof.

Hence, we consider the decomposition ${\rigid}_{\T} =\amalg_{r=0}^{k-1} {\rigid}_r$, where each ${\rigid}_r$ is
rigid in $\W_{i_r,i_{r+1}}$ (note that some of the ${\rigid}_r$ then
might be $0$, i.e.\ in the case when $i_{r+1}-i_r=1$).
Then it follows easily from Lemma~\ref{l:wings}(iv) that ${\rigid}_r$ is
maximal rigid in $\W_{i_r,i_{r+1}}$.
Since ${\rigid}_r$ is maximal rigid, its restriction to the abelian
category $\W_{i_r,i_{r+1}}$ is also cotilting. Our aim is to use this
decomposition of ${\rigid}$ to compute $\Free_{\rigid}$.

If $\W_{i_r,i_{r+1}}$ is nontrivial, $M_{i_r,i_{r+1}}$
is necessarily an indecomposable direct summand of ${\rigid}_r$.
It is then straightforward to check that
$$^{\eperp}(\amalg_{r=0}^{k-1} M_{i_r,i_{r+1}} \cap \T) =
(\coprod_{r=0}^{k-1}\W_{i_r,i_{r+1}}) \amalg \coprod_{r=0}^{k-1} \R_{i_r},$$
and from this it follows that $\Free \subset (\coprod_{r=0}^{k-1}\W_{i_r,i_{r+1}})
\amalg (\coprod_{r=0}^{k-1} \R_{i_r})$.

We also have that
$^{\eperp}{\rigid}_{\T}  \cap \W_{i_r,i_{r+1}} = {^{\eperp}{\rigid}_r}  \cap \W_{i_r,i_{r+1}}$,
which gives us the following description of $\Free$:
$$\Free = (\coprod_{r=0}^{k-1} (^{\eperp}{\rigid}_r)\cap \T)
\amalg (\coprod_{r=0}^{k-1} \R_{i_r}).$$
Since ${\rigid}_r$ is cotilting in $\W_{i_r,i_{r+1}}$, we have that
$^{\eperp}{\rigid}_r  \cap \W_{i_r,i_{r+1}} = \Cogen_{\W_{i_r,i_{r+1}}} {\rigid}_r$.
From this it follows that
$$\Free = (\coprod_{r=0}^{k-1} \Cogen_{\W_{i_r,i_{r+1}}} {\rigid}_r)
\amalg \coprod_{r=0}^{k-1} \R_{i_r}.$$
Noting that $\Cogen M_{i,\infty} \cap \T = \R_i$,
we see that $\Free = \Cogen {\rigid} \cap \T$ as claimed in (a).

Next, we consider slightly larger wings, $\W_{i_r,i_{r+1}+1}$ in order to
obtain the description of $\Free$ in (b).
For each $r$, define $\widetilde{{\rigid}}_r = {\rigid}_r \amalg M_{i_r, i_{r+1}+1}$. Then
$\widetilde{{\rigid}}_r$ is a cotilting module in $\W_{i_r, i_{r+1}+1}$.
Note that if $\rigid$ has exactly one Pr\"{u}fer summand, we need to
regard $\W_{i_r,i_{r+1}+1}=\W_{i_0,i_0+n+1}$ as being a type $A$ category non-exactly embedded in the tube, since the image of the projective-injective object is not rigid.

Note that
$$\Cogen_{\W_{i_r,i_{r+1}+1}}\widetilde{{\rigid}}_r=(\Cogen_{\W_{i_r,i_{r+1}}}{\rigid}_r)\amalg
\add M_{i_r,i_{r+1}}.$$
We can thus rewrite $\Free$ as follows:
$$\Free = (\coprod_{r=0}^{k-1} \Cogen_{\W_{i_r,i_{r+1}+1}} \widetilde{{\rigid}}_r)
\amalg (\coprod_{r=0}^{k-1} \R_{i_r}).$$
So, setting $\Free_r=\Cogen_{\W_{i_r,i_{r+1}+1}} \widetilde{{\rigid}}_r$, we obtain a
description of $\Free$ as claimed in (b).

Next, we compute $\Torsion$.
By definition, we have that $\Torsion= {^{\hperp}\Free} \cap \T$. Furthermore,
$$^{\hperp}( \amalg_{r=0}^{k-1} M_{i_r,i_{r+1}}) \cap \T= (\coprod_{r=0}^{k-1}
\tau^{-1} \W_{i_r,i_{r+1}}) \amalg (\coprod_{r_0}^{k-1} \C_{i_{r+1} +1}).$$

Noting that
$\Hom(M_{l,i_{r+1}+1}, M_{i_r,i_{r+1}+1}) \not= 0$ for $l \leq i_r$, we see
that
$$\Torsion \subset \coprod_{r=0}^{k-1} \tau^{-1} (\W_{i_r,i_{r+1}}) \subset \coprod_{r=0}^{k-1} \W_{i_r,i_{r+1}+1}.$$
From this, it follows that
$\Torsion = \coprod_{r=0}^{k-1}(^{\hperp}\Free_r \cap \W_{i_r,i_{r+1}+1})$, noting
that $$\Free_r= \Cogen_{\W_{i_r,i_{r+1}+1}} \widetilde{{\rigid}}_r.$$
Using the fact that $\widetilde{{\rigid}}_r$ is cotilting in
$\W_{i_r,i_{r+1}+1}$, combined with Corollary~\ref{cor:Gen-Cogen},
we see that
$\Torsion = \coprod_{r=0}^{k-1}\Torsion_r$, where
$$\Torsion_r = {^{\hperp}\widetilde{{\rigid}}_r} \cap
\W_{i_r,i_{r+1}+1} = \Gen_{\W_{i_r, i_{r+1}+1}} \tau_{\W_{i_r,i_{r+1}+1}}^{-1} \widetilde{{\rigid}}_r$$
is the torsion part of the torsion pair in $\W_{i_r,i_{r+1}+1}$ with torsion-free
part $\Free_r$.
We have that $^{\hperp}(\amalg_{r=0}^{k-1} M_{i_r, \infty}) =
\coprod_{r=0}^{k-1} \W_{i_r, i_{r+1}}$, and therefore $\Torsion= {^{\hperp}{\rigid}} \cap
\T$.

Since $\Gen M_{i, \infty} \cap \T$ is zero for all $i$ and
since we have
$\tau_{\W_{i_r,i_{r+1}+1}}^{-1} \widetilde{{\rigid}}_r  = \tau_{\W_{i_r,i_{r+1}+1}}^{-1} {\rigid}_r=
\tau^{-1} {\rigid}_r$, we see that
$\Torsion = \tau^{-1} {\Gen {\rigid} \cap \T}$.

See Figure~\ref{fig:wing} for an illustration of one of the wings
$\W_{i_r,i_{r+1}+1}$. We have that $\Free_r=\Cogen_{\W_{i_r,i_{r+1}+1}}\widetilde{{\rigid}}_r$.
The indecomposable objects in $\Free_r$ apart from $M_{i_r,i_{r+1}+1}$ lie
in the left hand shaded triangle. Also, $\Torsion_r=\Gen_{\W_{i_r,i_{r+1}+1}}\tau_{\W_{i_r,i_{r+1}+1}}^{-1}\widetilde{{\rigid}}_r$,
and the indecomposable objects in $\Torsion_r$ lie in the right hand shaded triangle.
\begin{figure}
\psfragscanon
\psfrag{M1}{$\scriptstyle M_{i_r,i_r+2}$}
\psfrag{M2}{$\scriptstyle M_{i_r+1,i_r+3}$}
\psfrag{M3}{$\scriptstyle M_{i_{r+1}-2,i_{r+1}}$}
\psfrag{M4}{$\scriptstyle M_{i_{r+1}-1,i_{r+1}+1}$}
\psfrag{M5}{$\scriptstyle M_{i_r,i_{r+1}}$}
\psfrag{M6}{$\scriptstyle M_{i_r,i_{r+1}+1}$}
\psfrag{M7}{$\scriptstyle M_{i_r,i_{r+1}+1}$}
\includegraphics[width=7cm]{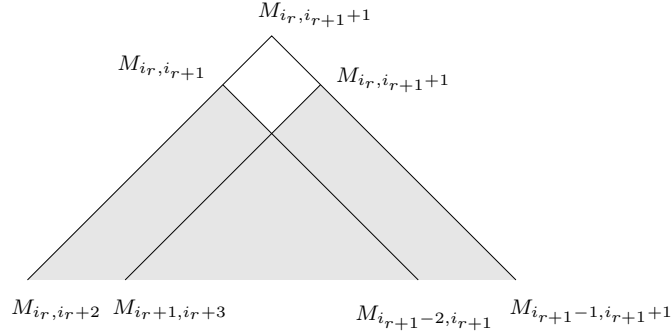}
\caption{A wing $\W_{i_r,i_{r+1}+1}$.}
\label{fig:wing}
\end{figure}
\end{proof}

\subsection{The maximal rigid module corresponding to a torsion pair}
In this section we give an explicit description of the inverse of the bijection
${\rigid}\mapsto (\Torsion_{\rigid},\Free_{\rigid})$ in Theorem~\ref{t:mainbijection} between maximal rigid
objects in $\overline{\T}$ and torsion pairs in $\T$.

\begin{lemma} \label{l:limG}
Let $(\Torsion,\Free)$ be a torsion pair in $\T$. Then an indecomposable object
in $\varinjlim(\ind \Free)$ either lies in $\ind\Free$
or it is a Pr\"{u}fer module which is the direct limit along a ray in
$\ind \Free$.
\end{lemma}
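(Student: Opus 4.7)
Suppose $X = \varinjlim_{j\in J} X_j$ is an indecomposable object of $\overline{\T}$ obtained from a filtered direct system with each $X_j\in\ind\Free$. The plan is to split into two cases according to whether $X$ is finitely generated.

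First, since each $X_j\in\T\subset\varinjlim\T$, we have $X\in\varinjlim\T$, and hence (by the classification of indecomposables in $\overline{\T}$, recalling that adic modules do not lie in $\varinjlim\T$) $X$ is either some $M_{ab}$ with $a,b$ finite or a Pr\"{u}fer module $M_{i,\infty}$.

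Suppose first that $X = M_{ab}$ is finitely presented. Then $\Hom(X,-)$ commutes with the filtered colimit (Lemma~\ref{limits}(b)), so the identity $X\to X=\varinjlim X_j$ factors as $X\xrightarrow{g} X_{j_0}\xrightarrow{f_{j_0}}X$ for some $j_0$. Thus $X$ is a direct summand of the indecomposable object $X_{j_0}\in\ind\Free$, forcing $X\cong X_{j_0}\in\ind\Free$.

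Suppose instead that $X=M_{i,\infty}$. Any finite submodule $M_{i,i+t}\subset X$ is finitely presented, so the inclusion $M_{i,i+t}\hookrightarrow X$ factors through some $X_j$ via a map $g\colon M_{i,i+t}\to X_j$ composing with the structure map $f_j\colon X_j\to X$ to yield the inclusion. Since $f_j\circ g$ is a monomorphism, so is $g$, and therefore $M_{i,i+t}$ is isomorphic to a subobject of $X_j\in\ind\Free$. As $\Free$ is closed under subobjects (Lemma~\ref{l:tpclosure}(b)), we conclude $M_{i,i+t}\in\ind\Free$ for every $t\ge 2$, which is precisely the statement that the ray $\R_i$ lies inside $\ind\Free$ and $X=M_{i,\infty}$ is its direct limit.

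The argument is essentially routine once the classification of indecomposables of $\overline{\T}$ is invoked; the only place where care is needed is the extraction of injectivity of $g$ from injectivity of $f_j\circ g$ in the Pr\"{u}fer case, and the observation that finite presentability lets the identity factor through a single stage of the system in the finite case.
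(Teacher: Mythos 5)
Your proof is correct, but it takes a genuinely different route from the paper's. The paper applies Lemma~\ref{limits}(b) with a torsion object in the first variable: for every $\torsion$ in $\Torsion$ it computes $\Hom(\torsion,X)\simeq\varinjlim\Hom(\torsion,X_j)=0$, and then concludes $X\in\Free$ from the equality $\Free=\Torsion^{\hperp}$ (Lemma~\ref{l:tpclosure}); in the Pr\"ufer case it composes an embedding $Y\hookrightarrow X$ of a ray object with this Hom-vanishing to place $Y$ in $\Free$. You instead apply the same compactness statement with $X$ (or a finite submodule of $X$) in the first variable, so that the identity map, respectively the inclusion $M_{i,i+t}\hookrightarrow X$, factors through a single stage $X_{j_0}$ of the system, and you then invoke closure of $\Free$ under direct summands, respectively under subobjects. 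Both arguments rest on the observation that adic modules do not lie in $\varinjlim\T$, so that the classification of indecomposables in $\overline{\T}$ leaves only the finite and Pr\"ufer cases, and both are complete. The paper's version only ever tests objects against $\Torsion$, which fits the way $\Free_{\rigid}$ is used elsewhere; yours has the merit of never mentioning $\Torsion$ at all, working entirely inside $\Free$ via its closure properties, and of making explicit that the whole ray $\R_i$ below a Pr\"ufer limit point is forced into $\Free$ by the subobject closure. Your handling of the one delicate step --- deducing that $g$ is a monomorphism because $f_{j_0}\circ g$ is --- is correct.
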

\begin{proof}
It is clear that any indecomposable in $\Free$ or Pr\"{u}fer module which
is the direct limit along a ray in $\ind \Free$ lies in
$\ind(\varinjlim \Free)$.
So suppose that $X=\varinjlim X_j$ is an indecomposable
in $\overline{\T}$, where each $X_j$ is in $\ind \Free$.
Firstly note that $X$ cannot be an adic module, since the adic modules
do not lie in $\varinjlim \T$.
Next, for any object ${\torsion}$ in $\Torsion$, we have
$$\Hom({\torsion},X)=\Hom({\torsion},\varinjlim X_j)\simeq \varinjlim \Hom({\torsion},X_j)=0,$$
using Lemma~\ref{limits}(b), since each $X_j$ lies in $\Free$.
It follows that if $X$ is in $\ind \T$, we have that $X$ lies in $\ind\Free$.
The only other possibility is if $X$ is a Pr\"{u}fer module. If $Y$ is
any indecomposable object in the corresponding ray in $\T$,
we have an embedding $Y\rightarrow X$. Hence $\Hom({\torsion},Y)=0$ for any object
${\torsion}$ in $\Torsion$, so $Y$ lies in $\Free$. The result follows.
\end{proof}

\begin{proposition} \label{p:TfromFG}
\begin{enumerate}
\item[(a)] Let $(\Torsion,\Free)$ be a torsion pair of ray type. Then $(\Torsion,\Free)$ can
be written in the form $(\Torsion_{\rigid},\Free_{\rigid})$, where ${\rigid}$
is the direct sum of the indecomposable objects in
\begin{equation}
\varinjlim (\ind \Free) \cap (\varinjlim (\ind \Free))^{\eperp}.
\label{e:rayinverse}
\end{equation}
Compare~\cite[Thm.\ 1.5]{bk1}.
\item[(b)] Let $(\Torsion,\Free)$ be a torsion pair of coray type. Then $(\Torsion,\Free)$ can
be written in the form $(\Torsion_{\rigid},\Free_{\rigid})$, where ${\rigid}$
is the direct sum of the indecomposable objects in
\begin{equation}
\varprojlim (\ind \Torsion) \cap {}^{\eperp}(\varprojlim (\ind \Torsion)).
\label{corayinverse}
\end{equation}
\end{enumerate}
\end{proposition}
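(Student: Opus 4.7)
The plan is to invoke Theorem~\ref{t:mainbijection} to reduce each part to identifying the indecomposable summands of an explicit maximal rigid object. For (a), a torsion pair $(\Torsion, \Free)$ of ray type corresponds under the bijection to a unique maximal rigid $\rigid$ of Pr\"{u}fer type in $\overline{\T}$, so it suffices to show that the indecomposable summands of $\rigid$ coincide with the indecomposable objects in $\varinjlim(\ind \Free) \cap (\varinjlim(\ind \Free))^{\eperp}$. By Lemma~\ref{l:limG}, the indecomposables in $\varinjlim(\ind \Free)$ are precisely the objects of $\ind \Free$ together with the Pr\"{u}fer modules $M_{i,\infty}$ whose ray $\R_i$ lies in $\Free$; by Proposition~\ref{p:ftgt}(b) this set contains all direct summands of $\rigid$.

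For the forward inclusion, I would verify the right Ext-perp condition on each indecomposable summand of $\rigid$ separately. Each Pr\"{u}fer summand is Ext-perp to everything in $\varinjlim(\ind \Free)$: by Theorem~\ref{exts}, $\Ext^1(X, M_{i_r,\infty}) = 0$ for every indecomposable $X$ in $\ind \Free$ or of Pr\"{u}fer type, and Lemma~\ref{limits}(d), together with pure-injectivity of objects of $\overline{\T}$, extends this vanishing to all of $\varinjlim(\ind \Free)$. For a finite summand $X$ of $\rigid$ and an indecomposable $Y$ of $\varinjlim(\ind \Free)$, either $Y \in \ind \Free$ (in which case $\Ext^1(Y,X)=0$ follows from $\Free \subset {}^{\eperp}\rigid$) or $Y$ is a Pr\"{u}fer summand of $\rigid$ (and the vanishing is by rigidity of $\rigid$).

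The converse inclusion is where the main work lies. Given an indecomposable $Y$ in the intersection, I would show $Y \oplus \rigid$ is rigid, so that $Y$ is a summand of $\rigid$ by maximality. The vanishing $\Ext^1(\rigid, Y) = 0$ is immediate from the right-perp condition, since the forward direction places every summand of $\rigid$ in $\varinjlim(\ind \Free)$. For $\Ext^1(Y, \rigid) = 0$, the case $Y \in \Free$ is immediate from $\Free = {}^{\eperp}\rigid \cap \T$. The main obstacle is the case $Y = M_{i,\infty}$ with $\R_i \subset \Free$: writing $Y = \varinjlim_t M_{i, i+t}$ along the ray and using Lemma~\ref{limits}(d) with pure-injective target $\rigid$, I would obtain $\Ext^1(Y, \rigid) \cong \varprojlim_t \Ext^1(M_{i, i+t}, \rigid)$, and each term vanishes since $M_{i, i+t} \in \R_i \subset \Free \subset {}^{\eperp}\rigid$.

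Part (b) follows by an entirely analogous argument, using the dual of Lemma~\ref{l:limG} (the indecomposables of $\varprojlim(\ind \Torsion)$ are objects of $\ind \Torsion$ together with adic modules along corays in $\Torsion$) and Lemma~\ref{limits}(e) in place of (d); alternatively, it can be derived from (a) via the duality $M \mapsto M^{\vee}$ together with Lemma~\ref{properties} and the commutative square of bijections from the previous subsection.
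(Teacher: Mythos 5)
Your argument is correct, and for the harder inclusion it takes a genuinely different route from the paper's. Both proofs start identically: invoke Theorem~\ref{t:mainbijection} and Proposition~\ref{p:ftgt} to produce the maximal rigid ${\rigid}$ of Pr\"{u}fer type with $(\Torsion,\Free)=(\Torsion_{\rigid},\Free_{\rigid})$, use Lemma~\ref{l:limG} to see that the indecomposables of $\varinjlim(\ind\Free)$ are those of $\ind\Free$ together with the Pr\"{u}fer modules $M_{i_r,\infty}$, and check by the same $\Ext$-computations (Lemmas~\ref{ARformula}, \ref{homvanishing}, \ref{limits}) that every indecomposable summand of ${\rigid}$ lies in \eqref{e:rayinverse}. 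The divergence is in showing that nothing else lies there: the paper identifies the finite part of ${\rigid}$ explicitly, working wing by wing in the $\W_{i_r,i_{r+1}}$ and applying the type~A recovery of a cotilting module as the relative Ext-injectives in its cogenerated class (the dual of \eqref{e:extprojectives}), plus an argument that perpendicularity against the rays $\R_{i_r}$ forces a finite indecomposable into one of the wings. You instead take an indecomposable $Y$ in \eqref{e:rayinverse}, show $\Ext^1({\rigid}\amalg Y,{\rigid}\amalg Y)=0$ (the only nontrivial vanishing being $\Ext^1(M_{i,\infty},{\rigid})$, which you correctly reduce via Lemma~\ref{limits}(d) and $\R_i\subset\Free\subset{}^{\eperp}{\rigid}$), and conclude $Y\in\add{\rigid}$ by maximality of ${\rigid}$ in $\overline{\T}$. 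This buys you a cleaner argument that bypasses the wing-by-wing analysis; the cost is that it needs the forward inclusion first (to extract $\Ext^1({\rigid},Y)=0$ from the perp condition) and it does not produce, as the paper's computation does along the way, the intrinsic description of $\ind{\rigid}\cap\T$ as $\{X\in\ind\Free:\Ext^1(Y,X)=0 \text{ for all } Y\in\ind\Free\}$. One small point to make explicit: rigidity of ${\rigid}\amalg Y$ also requires $\Ext^1(Y,Y)=0$, which is immediate since $Y$ lies both in $\varinjlim(\ind\Free)$ and in its $E$-perpendicular.
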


\begin{proof}
We only consider (a); the proof of (b) is similar.
By Theorem~\ref{t:mainbijection} and Proposition~\ref{p:ftgt}(b) and its proof,
there is a maximal rigid object ${\rigid}$ in $\overline{\T}$ of Pr\"{u}fer type such
that $(\Torsion,\Free)=(\Torsion_{\rigid},\Free_{\rigid})$ and $\Torsion_{\rigid}$, $\Free_{\rigid}$ have the following description.
Let the Pr{\"u}fer summands of ${\rigid}$ be $M_{i_r,\infty}$
for $r=0, \dots, k-1$ where $0 \leq i_0 < i_1 < \cdots < i_{k-1}
\leq n-1$. Then $$(\Torsion_{\rigid}, \Free_{\rigid}) = (\coprod_{r=0}^{k-1}
  \Torsion_r, (\coprod_{r=0}^{k-1}\Free'_r) \amalg \Free_{\infty}),$$ where
$\Torsion_r,\Free_{\infty}$ are as in Proposition~\ref{p:ftgt}(b) and
$\Free'_r=\Cogen_{\W_{i_r,i_{r+1}}} {\rigid}_r$, where ${\rigid}|_{\T}=\amalg_{r=0}^{k-1} {\rigid}_r$
with ${\rigid}_r$ in $\W_{i_r,i_{r+1}}$.

By~\eqref{e:extprojectives} in Section~\ref{s:Dynkin},
\begin{align*}
\ind {\rigid}_r &=  \{X\in \ind\Free'_r\,:\, \Ext^1(Y,X)=0\text{\ for all\ } Y\in \Free'_r\} \\
&= \{X\in \ind\Free'_r \,:\, \Ext^1(Y,X)=0\text{\ for all\ } Y\in \ind\Free\}.
\end{align*}
Note that $\Free_{\infty}=\coprod_{r=0}^{k-1} \R_{i_r} \subset \ind\Free$.
It follows that any indecomposable object $X$ in $\T$ which satisfies $\Ext^1(Y,X)=0$
for all $Y$ in $\ind \Free$ must lie inside some wing $\W_{i_r,i_{r+1}}$,
and hence in some $\Free'_r$. Therefore
\begin{align*}
\ind {\rigid}|_{\T} &= \ind(\amalg_{r=0}^{k-1} {\rigid}_r) \\
&= \{X\in \ind \Free\,:\,\Ext^1(Y,X)=0\text{\ for all\ } Y\in \ind\Free\}.
\end{align*}

Next, by Lemma~\ref{l:limG}, an indecomposable object in
$\varinjlim(\ind \Free)$ either lies in $\ind\Free$ or is the direct limit
along a ray in $\Free$, i.e.\ it is a Pr\"{u}fer summand $M_{i_r,\infty}$ of ${\rigid}$.
If $Y$ is one of the latter summands and $X$ lies in $\ind\Free$, we have
$D\Ext^1(Y,X)\simeq \Hom(X,\tau Y)=0$,
by Lemmas~\ref{ARformula} and~\ref{homvanishing}. Since the Pr\"{u}fer summands
themselves satisfy
$\Ext^1(M_{i_r,\infty},M_{i_{r'},\infty})=0$
for all $r,r'$, the result follows.
\end{proof}

\section{Geometric interpretation in the tube case}
\label{s:tubegeometric}
\subsection{Short exact sequences}

Let $[i,j],[i',j']$ be arcs in $\A(\UUU(n))$, giving
rise to arcs $\pi_n([i,j]),\pi_n([i',j'])$ in $\A(\AAA(n))$ with corresponding
indecomposable objects $M_{ij},M_{i'j'}$ in ${\rigid}$.
Suppose that $I^-_{\UUU(n)}([i,j],[i'+kn,j'+kn])$ (i.e.\ the intersection
number between arcs $[i,j]$ and $[i'+kn,j'+kn]$ in $\UUU(n)$) is equal to $1$ for some
$k$ in $\mathbb{Z}$. Then, if $j'+kn>i+1$, we have four objects in the Auslander-Reiten quiver of $\T$ as shown in Figure~\ref{fig:diamond}(a); if $j'+kn=i+1$, we have three
objects as shown in Figure~\ref{fig:diamond}(b).

\begin{figure}
\centering
\subfigure[Case $j'+kn>i+1$.]{
$$\begin{xy} *!D\xybox{
\xymatrix@!0@=1.2cm{
& M_{i'+kn,j} \ar[dr] & \\
M_{i'j'}=M_{i'+kn,j'+kn} \ar[dr] \ar[ur] && M_{ij} \\
& M_{i,j'+kn} \ar[ur]
}}\end{xy}$$}
\subfigure[Case $j'+kn=i+1$.]{
$$$$\begin{xy} *!D\xybox{
\xymatrix@!0@=1.2cm{
& M_{i'+kn,j} \ar[dr] & \\
M_{i'j'}=M_{i'+kn,j'+kn} \ar[ur] && M_{ij} \\
&
}}\end{xy}$$}
\caption{Objects and paths in the Auslander-Reiten quiver of $\T$ corresponding
 to an intersection in $\UUU(n)$ between arcs $[i,j]$ and $[i'+kn,j'+kn]$.}
\label{fig:diamond}
\end{figure}
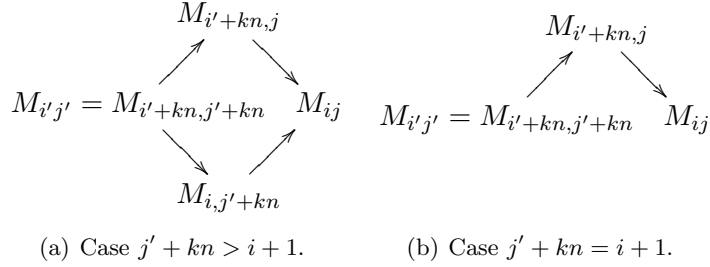

In the case $j'+kn>i+1$, this corresponds to a non-split short exact sequence:
\begin{equation}
0 \rightarrow M_{i',j'} \overset{f}{\longrightarrow} M_{i'+kn,j}\amalg M_{i,j'+kn}
\overset{g}{\rightarrow} M_{ij}\rightarrow 0,
\label{e:diamondses}
\end{equation}
and in the case $j'+kn=i+1$, it corresponds to a short exact sequence:
\begin{equation}
0 \rightarrow M_{i',j'} \overset{f}{\longrightarrow} M_{i'+kn,j}\overset{g}{\rightarrow} M_{ij}\rightarrow 0
\label{e:diamondsesb}
\end{equation}
in $\T$. As in~\cite[Remark 4.25]{warkentin},
these can be interpreted geometrically in $\UUU(n)$: see Figure~\ref{fig:tubesplit-sum}.

\begin{figure}
\psfragscanon
\psfrag{i'+kn}{$\scriptstyle i'+kn$}
\psfrag{i}{$\scriptstyle i$}
\psfrag{j'+kn}{$\scriptstyle j'+kn$}
\psfrag{j}{$\scriptstyle j$}
\subfigure[Case $j'+kn>i+1$.]{\includegraphics[width=3.2cm]{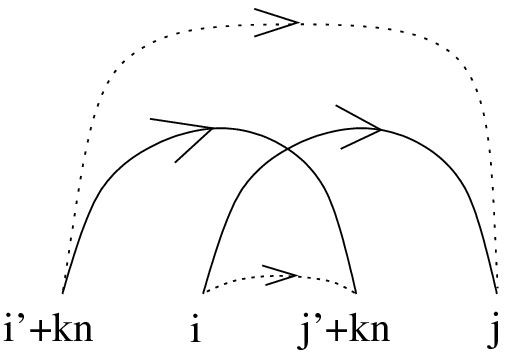}}
\quad \quad \quad
\subfigure[Case $j'+kn=i+1$.]{\includegraphics[width=3.25cm]{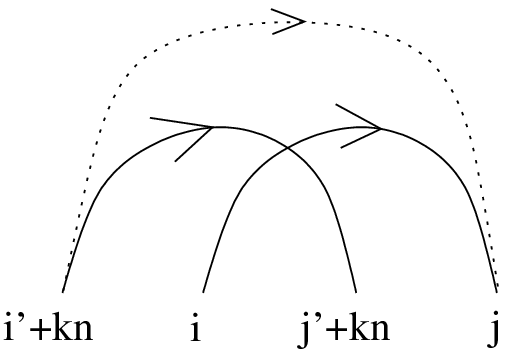}}
\caption{Non-split extensions in $\T$ represented geometrically.}
\label{fig:tubesplit-sum}
\end{figure}

If $j'+kn>i+1$, write $f=\begin{pmatrix} f_1 \\ f_2 \end{pmatrix}$ and
$g=\begin{pmatrix} g_1 & g_2 \end{pmatrix}$.
Then $f_1$ and $g_2$ are monomorphisms and $f_2$ and $g_1$ are epimorphisms.
The $f_i$ and $g_i$ are uniquely determined up to a choice
of scalars. If $j'+kn=i+1$, $f$ is a monomorphism and $g$ is an epimorphism,
again uniquely determined up to a choice of scalars.

\begin{lemma}
Any non-split short exact sequence with first term $M_{i'j'}$ and last
term $M_{ij}$ has the same form as~\eqref{e:diamondses} or~\eqref{e:diamondsesb}.
\end{lemma}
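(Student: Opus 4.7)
Let $0 \to M_{i',j'} \xrightarrow{f} E \xrightarrow{g} M_{ij} \to 0$ be a non-split short exact sequence in $\T$. The plan is to decompose $E$ using that $\T$ is serial and then force $E$ to consist of one or two indecomposable summands of the forms in \eqref{e:diamondsesb} or \eqref{e:diamondses} by a length argument. Write $E = \amalg_s M_{a_s, b_s}$. By Lemma~\ref{l:uniserialclosure}(a) some component $f_s \colon M_{i',j'} \to M_{a_s, b_s}$ is a monomorphism, which by uniseriality forces $a_s \equiv i' \pmod n$ and $\ell(M_{a_s, b_s}) \geq \ell(M_{i',j'})$; dually some $g_t$ is an epimorphism, forcing $b_t \equiv j \pmod n$ and $\ell(M_{a_t, b_t}) \geq \ell(M_{ij})$.

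The first step is to show $s = t$. If $s \neq t$, summing the two length bounds already saturates $\ell(E) = \ell(M_{i',j'}) + \ell(M_{ij})$, forcing all other summands to vanish and both inequalities to be equalities; thus $E \cong M_{i',j'} \amalg M_{ij}$. In that case $f_s$ is a monomorphism from a finite-length indecomposable to itself, hence an isomorphism. The automorphism $\begin{pmatrix} 1 & 0 \\ -f_t f_s^{-1} & 1 \end{pmatrix}$ of $M_{i',j'} \amalg M_{ij}$ then converts $f$ into the canonical inclusion $(f_s, 0)$, making the SES equivalent to the split one --- contradicting non-splitness. Hence $s = t$, and a single distinguished summand $M_{a_s, b_s}$ carries both the monomorphism and the epimorphism.

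Next, write $a_s = i'+kn$ and $b_s = j+ln$, and set $K = k-l$, so that after a $\sigma$-shift $M_{a_s, b_s} = M_{i'+Kn, j}$. The length constraints translate to $j \geq j'+Kn$, $i \geq i'+Kn$, and $j'+Kn \geq i+1$. If $E$ is indecomposable, then $\ell(E) = \ell(M_{a_s, b_s})$ forces $j'+Kn = i+1$, giving~\eqref{e:diamondsesb}. Otherwise $E = M_{a_s, b_s} \amalg R$ with $\ell(R) = (j'+Kn)-(i+1) > 0$. Let $f_R$ be the composition of $f$ with the projection $E \to R$; restricting $g$ to $M_{a_s, b_s}$ (where it coincides with $g_s$, which is epi) gives $\ell(f(M_{i',j'}) \cap M_{a_s, b_s}) = \ell(M_{a_s, b_s}) - \ell(M_{ij}) = i - i' - Kn$. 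But $f(M_{i',j'}) \cap M_{a_s, b_s} = f_s(\ker f_R)$, so by uniseriality $\ker f_R = M_{i', i+1-Kn}$, and hence $\mathrm{im}(f_R) \cong M_{i',j'}/M_{i', i+1-Kn} \cong M_{i, j'+Kn}$ (after $\sigma$-shift). Since $\ell(\mathrm{im}(f_R)) = \ell(R)$ and $\mathrm{im}(f_R) \hookrightarrow R$, length equality gives $R = M_{i, j'+Kn}$, matching~\eqref{e:diamondses}.

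The hard part will be ruling out the $s \neq t$ alternative in the first step: the length count alone only forces $E \cong M_{i',j'} \amalg M_{ij}$, and one must genuinely exhibit an automorphism of $E$ trivialising the SES, using that a monomorphism from a finite-length indecomposable to itself is automatically an isomorphism. Once this is established, identifying $E$ in the remaining cases reduces to computations of subquotients inside a single uniserial module, with the key inputs being the length of the intersection $f(M_{i',j'}) \cap M_{a_s,b_s}$ and the uniseriality of $M_{i',j'}$.
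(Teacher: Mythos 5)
Your proof is correct, and it takes a genuinely different route from the paper's in the main identification step. Both arguments open the same way: serialness provides a monomorphic component of $f$ and an epimorphic component of $g$, and non-splitness forces these onto a common indecomposable summand (the paper gets this by observing that the mono component cannot be an isomorphism, so the complementary summand is too short to surject onto $M_{ij}$; you get it by showing that otherwise $E\cong M_{i'j'}\amalg M_{ij}$ with $f_s$ an isomorphism, which yields a retraction of $f$). The real divergence is in pinning down the remaining summand. The paper cases on whether the composite $v_1u_1$ through the distinguished summand vanishes, and in the non-vanishing case uses a basis of $\Hom(M_{i'j'},M_{ij})$ given by paths through the bottom of the diamond, plus a cancellation and length argument, to locate $X=M_{i,j'+kn}$. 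You instead case on whether $E$ is indecomposable and, in the decomposable case, compute $\ker(g|_{M_{a_s,b_s}})$, identify it with $f_s(\ker f_R)$, read off $\ker f_R$ from uniseriality of $M_{i'j'}$, and conclude $R=\operatorname{im} f_R\cong M_{i,j'+Kn}$ by a length count. This is more elementary and self-contained: it avoids the Hom-basis argument and simultaneously shows that $R$ is indecomposable and that $f_R$ is an epimorphism. The one detail you leave implicit is that $g|_R$ is a monomorphism, as required for the precise form of~\eqref{e:diamondses}; this follows because $\ker g\cong M_{i'j'}$ is uniserial, so the submodules $\ker g\cap M_{a_s,b_s}$ and $\ker g\cap R$ are comparable with trivial intersection, and the first is nonzero (otherwise $g_s$ would be an isomorphism and the sequence would split), forcing the second to be zero.
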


\begin{proof}
Let
\begin{equation}
0\rightarrow M_{i'j'} \overset{u}{\rightarrow} E \overset{v}{\rightarrow} M_{ij} \rightarrow 0
\label{e:typicalses}
\end{equation}
be an arbitrary non-split short exact sequence with first term $M_{i'j'}$ and
last term $M_{ij}$.
By Lemma~\ref{l:uniserialclosure} we may write $E=E_1\amalg E_2$ where $E_1$ is
indecomposable and such that we have that, decomposing $u=\begin{pmatrix} u_1 \\ u_2 \end{pmatrix}$ and $v=(v_1,v_2)$, we have that $u_1$ a monomorphism.
Since the sequence is not split, $u_1$ is not an isomorphism, so, denoting the length
of an object $M$ in $\T$ by $\ell(M)$, we have
$$\ell(E_2)=\ell(M_{ij})+\ell(M_{i'j'})-\ell(E_1)<\ell(M_{ij}).$$
Since $v_2$ is not an epimorphism, $v_1$ must be an epimorphism, again using
Lemma~\ref{l:uniserialclosure}.

\emph{Case (i)}: Suppose first that $v_1u_1\not=0$.
If an integer $k$ is such that $i'+kn\leq i$ and $i+2\leq j'+kn\leq j$,
there is a homomorphism in $\T$ from $M_{i'j'}$ to $M_{ij}$ obtained by
composing an epimorphism from $M_{i'j'}$ to $M_{i,j'+kn}$ with a monomorphism
from $M_{i,j'+kn}$ to $M_{ij}$ (i.e.\ the two maps in the lower edges of the
diamond in Figure~\ref{fig:diamond}).

It is easy to check that the homomorphisms of this kind (allowing $k$ to vary)
form a basis of $\Hom(M_{i'j'},M_{ij})$. Since $vu=0$ and $v_1u_1\not=0$ and
$v_1u_1$ is a scalar multiple of such a basis element, there must be an indecomposable
summand $X$ of $E_2$ such that $v_Xu_X$ is a scalar multiple of $v_1u_1$
(where $u_X$, $v_X$ are the corresponding components of $u_2$, $v_2$).
But
\begin{equation}
\ell(X)\leq \ell(M_{ij})+\ell(M_{i'j'})-\ell(E_1),
\label{e:lowpoint}
\end{equation}
and there is a unique path from $M_{i'j'}$ to $M_{ij}$ through
such an $X$ giving rise to $g_1f_1$
(i.e.\ with $X=M_{i,j'+kn}$) from which it follows that
we have equality in~\eqref{e:lowpoint} and thus that $E_2=X$ is
indecomposable and $u_2$ is an epimorphism and $v_2$ is a monomorphism.
It follows that the short exact sequence~\eqref{e:typicalses} is of
the form~\eqref{e:diamondses} up to a choice of scalars.

\emph{Case (ii)}: Now assume that $v_1u_1=0$. This implies that
$\ell(E_1)\geq \ell(M_{ij})+\ell(M_{i'j'})$,
but we also have $\ell(E_1)\leq \ell(E_1\amalg E_2)=\ell(M_{ij})+\ell(M_{i'j'})$,
so we must have equality and $E=E_1$ is indecomposable.
It follows that~\eqref{e:typicalses} is of the form~\eqref{e:diamondsesb} up
to a choice of scalars. The proof is complete.
\end{proof}

\begin{definition}
We call a collection $\SS$ of arcs in $\A(\AAA(n))$ an \emph{oriented Ptolemy
diagram} (in $\AAA(n)$) if, whenever $\pi_n([i,j])$ and $\pi_n([i',j'])$ lie in
$\SS$ with $i'<i<j'<j$ then $\pi_n([i,j'])$ (when $j'>i+1$) and $\pi_n([i',j])$
also lie in $\SS$ (see related definitions in Section~\ref{s:geometricmodel} and~\cite{ng,hjr}).
\end{definition}

We note that the additive closure of a collection of indecomposable objects in $\T$ is closed
under extensions if and only if the corresponding collection of arcs is an oriented Ptolemy diagram in $\AAA(n)$. It is also easy to check that if $\pi_n([i,j])$
is an arc in $\A(\AAA(n))$, then the indecomposable quotients of $M_{ij}$ are the $M_{i'j}$ where
$i\leq i'\leq j-2$, i.e.\ arcs in $\AAA(n)$ corresponding to arcs in $\UUU(n)$ with the same ending point and with starting point weakly to the right of $i$.
Call these the \emph{left-shortenings} of $\pi_n([i,j])$. Similarly, submodules
are given by right-shortenings.

We make the following remark, which follows from Proposition~\ref{crossingnumbers}.
Recall that $\widetilde{\A}(\AAA(n))$ denotes $\A = \A(\AAA(n))$ extended to include the homotopy classes of the arcs $\pi_n([i,\infty])$ and $\pi_n([-\infty,i])$.

\begin{remark}
The bijection $\psi:\pi_n([i,j])\mapsto M_{ij}$ between $\widetilde{\A}(\AAA(n))$ and $\ind(\overline{\T})$
induces a bijection between maximal noncrossing collections of arcs in $\AAA(n)$ (including
the infinite arcs) and maximal rigid objects in $\overline{\T}$.
\end{remark}

We can now describe the conditions on collections of arcs appearing in torsion
pairs in $\T$, using the above and Lemmas~\ref{l:tpclosure} and~\ref{l:uniserialclosure}.

\begin{proposition}
\begin{enumerate}
\item[(a)]
A collection $\SS$ of arcs in $\A(\AAA(n))$ corresponds to the torsion part
of a torsion pair in $\T$ if and only if
$\SS$ is an oriented Ptolemy diagram in $\AAA(n)$ and $\SS$ is closed under left-shortening.
\item[(b)]
A collection $\SS$ of arcs in $\A(\AAA(n))$ corresponds to the torsion-free part
of a torsion pair in $\T$ if and only if $\SS$ is an oriented Ptolemy diagram in $\AAA(n)$
and $\SS$ is closed under right-shortening.
\end{enumerate}
\end{proposition}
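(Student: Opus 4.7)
The plan is to apply Lemma~\ref{l:tpclosure} to the serial finite length abelian category $\T$ and translate the two closure conditions (extensions, quotients/subobjects) into their geometric counterparts via the bijection $\psi$. For part (a), a subcategory $\Torsion$ of $\T$ is the torsion part of a torsion pair if and only if $\Torsion$ is closed under extensions and quotients, by Lemma~\ref{l:tpclosure}(a). So writing $\Torsion=\add(\psi(\SS))$, the task splits into showing that (i) $\Torsion$ is closed under quotients iff $\SS$ is closed under left-shortening, and (ii) $\Torsion$ is closed under extensions iff $\SS$ is an oriented Ptolemy diagram in $\AAA(n)$.

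For (i), Lemma~\ref{l:uniserialclosure}(b) gives $\ind\Gen(\Torsion)=\ind\Gen(\psi(\SS))$, so closure under quotients is equivalent to: every indecomposable quotient of every $M_{ij}\in\psi(\SS)$ lies in $\psi(\SS)$. Since $\T$ is serial and uniserial, the indecomposable quotients of $M_{ij}$ are exactly the $M_{i'j}$ with $i\le i'\le j-2$, which is by definition the set of left-shortenings of $\pi_n([i,j])$; this gives (i) immediately. For (ii), the classification lemma preceding the definition of an oriented Ptolemy diagram shows that every non-split short exact sequence with outer terms $M_{i'j'}$ and $M_{ij}$ in $\T$ has the form \eqref{e:diamondses} or \eqref{e:diamondsesb}, parametrised by a representative $[i'+kn,j'+kn]$ of $\pi_n([i',j'])$ yielding a transverse crossing with $[i,j]$ in $\UUU(n)$; up to relabelling we may take this representative so that $i'<i<j'<j$. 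The middle-term summands are $M_{i',j}$ and (when $j'>i+1$) $M_{i,j'}$. Requiring these summands to lie in $\Torsion$ for every such pair and every such representative is precisely the oriented Ptolemy condition as stated.

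To promote the equivalence (ii) from non-split extensions between \emph{indecomposables} to arbitrary extensions between objects of $\Torsion$, I would reduce to the indecomposable case in the standard way: split extensions are automatic since $\Torsion$ is additive, and for a non-split extension $0\to M\to E\to N\to 0$ with $M,N\in\Torsion$, writing $M$ and $N$ as direct sums of indecomposables and repeatedly pulling back/pushing out along the direct summands (a horseshoe-style induction) reduces the problem to the case where both $M$ and $N$ are indecomposable. Part (b) is proved by the same strategy, this time invoking Lemma~\ref{l:tpclosure}(b) and the dual of Lemma~\ref{l:uniserialclosure}: indecomposable subobjects of $M_{ij}$ are the $M_{i,j'}$ with $i+2\le j'\le j$, namely the right-shortenings, and the Ptolemy condition captures closure under extensions exactly as before. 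The main obstacle is the bookkeeping in (ii): making precise how the local Ptolemy condition with $i'<i<j'<j$ in $\UUU(n)$ accounts for every negative crossing in $\AAA(n)$ and thus for every non-split extension between $M_{i'j'}$ and $M_{ij}$; once that correspondence is pinned down, the proof is an immediate translation.
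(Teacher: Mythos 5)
Your proposal is correct and follows essentially the same route as the paper, which derives the proposition directly from Lemma~\ref{l:tpclosure}, Lemma~\ref{l:uniserialclosure}, the classification of non-split short exact sequences between indecomposables, and the observation that left-/right-shortenings realise indecomposable quotients/subobjects. The reduction of extension-closure to the indecomposable case and the bookkeeping over all lifts $[i'+kn,j'+kn]$ are handled exactly as the paper intends.
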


We remark that, if ${\rigid}$ is of Pr\"{u}fer type, by Proposition~\ref{p:ftgt},
$\psi(\ind\Torsion_{\rigid})$ can be obtained by taking the closure of the set of arcs
corresponding to finitely generated indecomposable summands of ${\rigid}$ under left
shortening and rotating all resulting arcs one step to the right.
We obtain $\psi(\ind\Free_{\rigid})$ by taking the closure of the set of arcs
corresponding to \emph{all} of the summands of ${\rigid}$ under right shortening.

Similarly, by Proposition~\ref{p:TfromFG}, if $(\Torsion,\Free)$ is a torsion pair where
$\Free$ generates $\T$ (i.e.\ of ray type), then $\psi(\varinjlim(\ind\Free))$ can be obtained from
$\psi(\ind\Free)$ by first adding any infinite arc $\pi_n([i,\infty])$ for which all
arcs $\pi_n([i,j])$ for $j\geq a$ for some $a$ lie in $\psi(\ind\Free)$.
Then ${\rigid}$ is the direct sum of the indecomposable objects corresponding to the
arcs $\alpha$ in $\psi(\varinjlim(\ind\Free))$ such that the pair
$(\beta,\alpha)$ of arcs has no negative intersections for all
$\beta$ in $\psi(\varinjlim(\ind\Free))$.

Similar descriptions can be given in the adic/coray type case.

Finally, we give an example in a tube of rank $n=14$ to illustrate Proposition~\ref{p:ftgt} and the results in this section.
The arcs corresponding to the indecomposable direct summands of ${\rigid}$ are
displayed in Figure~\ref{fig:tubeex}
(only the beginnings of the infinite arcs are shown). Note that the Pr\"{u}fer modules
which are indecomposable summands of ${\rigid}$ are $M_{0,\infty},M_{6,\infty},M_{10,\infty}$ and
$M_{13,\infty}$, so $i_0=0$, $i_1=6$, $i_2=10$ and $i_3=13$.
\begin{figure}[ht]
\psfragscanon
\psfrag{0}{$0$}
\psfrag{1}{$1$}
\psfrag{2}{$2$}
\psfrag{3}{$3$}
\psfrag{4}{$4$}
\psfrag{5}{$5$}
\psfrag{6}{$6$}
\psfrag{7}{$7$}
\psfrag{8}{$8$}
\psfrag{9}{$9$}
\psfrag{10}{$10$}
\psfrag{11}{$11$}
\psfrag{12}{$12$}
\psfrag{13}{$13$}
\includegraphics[width=10cm]{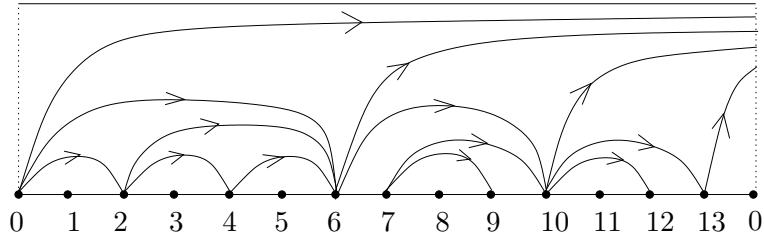}
\caption{The maximal rigid object ${\rigid}$}
\label{fig:tubeex}
\end{figure}
The arcs corresponding to the indecomposable objects in $\Torsion_{\rigid}$ are displayed in
Figure~\ref{fig:tubeF}.
\begin{figure}[ht]
\psfragscanon
\psfrag{0}{$0$}
\psfrag{1}{$1$}
\psfrag{2}{$2$}
\psfrag{3}{$3$}
\psfrag{4}{$4$}
\psfrag{5}{$5$}
\psfrag{6}{$6$}
\psfrag{7}{$7$}
\psfrag{8}{$8$}
\psfrag{9}{$9$}
\psfrag{10}{$10$}
\psfrag{11}{$11$}
\psfrag{12}{$12$}
\psfrag{13}{$13$}
\includegraphics[width=10cm]{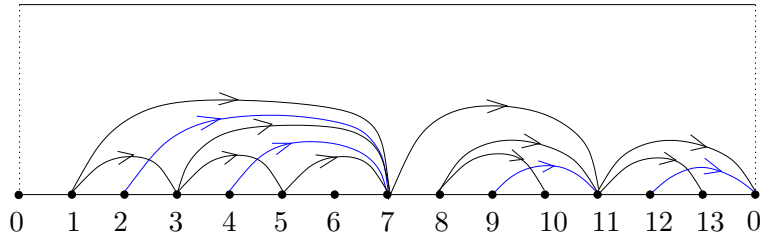}
\caption{The torsion part, $\Torsion_{\rigid}=\tau^{-1}(\Gen {\rigid}\cap \T)$, of the torsion pair corresponding to ${\rigid}$.
The arcs corresponding to indecomposable objects not in $\tau^{-1}(\add {\rigid}\cap \T)$ are drawn in blue.}
\label{fig:tubeF}
\end{figure}

The arcs corresponding to the indecomposable objects in the $\Free_{\rigid}\cap \W_{i_r,i_{r+1}+1}$
are displayed in Figure~\ref{fig:tubeG}
(with dotted arcs indicating the indecomposable
summands of ${\rigid}$ which are not in $\T$ (or $\ind\Free$)).
Note that there are infinitely many additional arcs not displayed, corresponding to indecomposables in $\Free_{\infty}$ but not in any of the $\Free_r$.
The missing arcs are $\pi_{14}([0,j])$ for $j\geq 8$, $\pi_{14}([6,j])$ for $j\geq 12$,
$\pi_{14}([10,j])$ for $j\geq 15$ and $\pi_{14}([13,j])$ for $j\geq 16$.
Note that, as indicated by Proposition~\ref{p:ftgt}, the intersections of
$\Torsion_{\rigid}$ and $\Free_{\rigid}$ with the wings $\W_{i_r,i_{r+1}+1}$ (which are
$\W_{0,7},\W_{6,11},\W_{10,14}$ and $\W_{13,15}$) are torsion pairs.

\begin{figure}[ht]
\psfragscanon
\psfrag{0}{$0$}
\psfrag{1}{$1$}
\psfrag{2}{$2$}
\psfrag{3}{$3$}
\psfrag{4}{$4$}
\psfrag{5}{$5$}
\psfrag{6}{$6$}
\psfrag{7}{$7$}
\psfrag{8}{$8$}
\psfrag{9}{$9$}
\psfrag{10}{$10$}
\psfrag{11}{$11$}
\psfrag{12}{$12$}
\psfrag{13}{$13$}
\includegraphics[width=10cm]{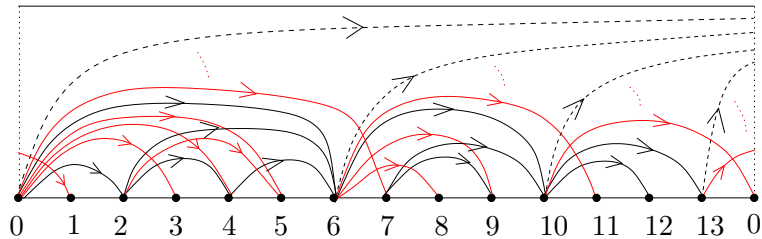}
\caption{The torsion-free part, $\Free_{\rigid}=\Cogen {\rigid}\cap \T$, of the torsion pair corresponding to ${\rigid}$.
The arcs not in $\add {\rigid}$ are drawn in red. The dashed arcs are the indecomposable summands of ${\rigid}$
which are not of finite length (and thus not in $\Free_{\rigid}$).
The arcs $\pi_{14}([0,j])$ for $j\geq 8$, $\pi_{14}([6,j])$ for $j\geq 12$,
$\pi_{14}([10,j])$ for $j\geq 15$ and $\pi_{14}([13,j])$ for $j\geq 16$ have been omitted
for clarity.}
\label{fig:tubeG}
\end{figure}
In Figure~\ref{fig:tubeAR}, we show the indecomposable summands of $\T$ and the indecomposable objects in $\Torsion_{\rigid}$ and $\Free_{\rigid}$ in the AR-quiver of the tube.
\begin{figure}[ht]
$$\xymatrix@!0@=0.35cm{
&& \ast &&&&&& \ast && \ast &&&&&&&&&&&& \ast &&&&&& \\
&&&&&&&&&&&&&&&&&&&&&&&&&&&& \\
\ar@{--}[uurr] &&&&&&&&&&&&&&&&&&&&&&&&&&&& \\
&\cdot && \cdot && \circ \ar@{--}[uuurrr] && \circ \ar@{--}[uuurrr] && \cdot && \cdot && \cdot && \cdot && \cdot && \circ \ar@{--}[uuurrr] && \cdot && \cdot && \cdot && \circ \ar@{-}[ur] & \\
\cdot && \cdot && \circ && \circ && \cdot && \cdot && \cdot && \cdot && \cdot && \circ && \cdot && \cdot && \cdot && \circ && \cdot && \\
&\cdot && \circ && \circ && \cdot && \cdot && \cdot && \cdot && \cdot && \circ && \cdot && \cdot && \cdot && \circ && \cdot & \\
\cdot && \circ && \bullet && \Box && \cdot && \cdot && \cdot && \cdot && \circ && \cdot && \cdot && \cdot && \circ && \cdot && \cdot && \\
& \circ && \circ && \cdot && \Box && \cdot && \cdot && \cdot && \circ && \cdot && \cdot && \cdot && \circ && \cdot && \cdot & \\
\circ && \circ && \cdot && \bullet && \Box && \cdot && \cdot && \bullet && \Box && \cdot && \cdot && \circ && \cdot && \cdot && \circ && \\
&\circ && \cdot && \circ && \cdot && \Box && \cdot && \circ && \bullet && \Box && \cdot && \bullet && \Box && \cdot && \circ & \\
*=0{\bullet}  \ar@{.}[uuuuuuuuu]+0 && \Box && \bullet && \Box && \bullet && \Box && \circ && \bullet && \Box && \Box && \bullet && \Box && \Box && \circ && *=0{\bullet} \ar@{.}[uuuuuuuuu]+0 &&
}$$
\caption{The AR-quiver of the tube, showing the indecomposable summands of ${\rigid}$ ($\bullet$ or $\ast$), $\ind(\Torsion_{\rigid})$ ($\Box$) and $\ind(\Free_{\rigid})$ ($\circ$ or $\bullet$). The Pr\"{u}fer direct summands of ${\rigid}$ are shown (symbolically) at the top of the diagram as asterisks.}
\label{fig:tubeAR}
\end{figure}

\noindent \textbf{Acknowledgements}
All three authors would like to thank the referee for helpful comments on an
earlier version of this manuscript, and the Mathematics Research Institute in
Oberwolfach for its support during a conference in February 2011.
ABB would also like to thank Karin Baur and the FIM at ETH for their support
and kind hospitality during a visit in May 2011.
KB would like to thank Aslak Buan and the NTNU for their kind hospitality
during a visit in December 2010.
RJM would like to thank Karin Baur and the FIM at the ETH, Zurich, for their
support and kind hospitality during a visit in Spring 2011,
and would like to thank Andrew Hubery for a helpful conversation.

\end{document}